\newtheorem{theorem}{Theorem}[section]
\newtheorem{lemma}[theorem]{Lemma}
\newtheorem{proposition}[theorem]{Proposition}
\newtheorem{corollary}[theorem]{Corollary}
\theoremstyle{definition}
\newtheorem{example}[theorem]{Example}
\theoremstyle{remark}
\newtheorem{remark}{Remark}
\numberwithin{equation}{section}
\DeclareMathAlphabet\mathoo{U}{eur}{b}{n}
\DeclareMathOperator*{\esssup}{ess\,sup}
\begin{document}
\setcounter{page}{1}

\title[Integral operators with almost periodic kernels]{Inverse-closedness of subalgebras \\of integral operators \\with almost periodic kernels}

\author{E.~Yu. Guseva}
 \address{Department of System Analysis and Control,
Voronezh State University\\ 1, Universitetskaya Square, Voronezh 394036, Russia}

\email{\textcolor[rgb]{0.00,0.00,0.84}{elena.guseva.01.06@gmail.com}}

\author{V.~G. Kurbatov}
 \address{Department of Mathematical Physics,
Voronezh State University\\ 1, Universitetskaya Square, Voronezh 394036, Russia}
\email{\textcolor[rgb]{0.00,0.00,0.84}{kv51@inbox.ru}}

\subjclass{Primary 47G10, 45P05; Secondary 46H10, 47B38, 42A75, 32A65, 34C27, 35B15}

\keywords{Almost periodic operator, integral operator, integral equation, inverse closedness, full subalgebra}

\date{\today}

\begin{abstract}
The integral operator of the form
\begin{equation*}
\bigl(Nu\bigr)(x)=\sum_{k=1}^\infty e^{i\langle\omega_k,x\rangle}
\int_{\mathbb R^c}n_k(x-y)\,u(y)\,dy
\end{equation*}
acting in $L_p(\mathbb R^c)$, $1\le p\le\infty$, is considered. It is assumed that $\omega_k\in\mathbb R^c$, $n_k\in L_1(\mathbb R^c)$, and
\begin{equation*}
\sum_{k=1}^\infty\lVert n_k\rVert_{L_1}<\infty.
\end{equation*}
We prove that if the operator $\mathbf1+N$ is invertible, then
$(\mathbf1+N)^{-1}=\mathbf1+M$, where $M$ is an integral operator possessing the analogous
representation.
\end{abstract}

\maketitle

\section*{Introduction}\label{s:Introduction}
This paper is devoted to almost periodic linear operators, i.~e., operators with almost periodic `coefficients' (we interpret the kernel of an integral operator as a kind of `coefficients'). Different properties of such operators were investigated in~\cite{Amerio-Prouse71,Avron-Simon83,
Bellissard-Bessis-Moussa82,Bellissard-Lima-Testard85,Bruno-Pankov-Tverdokhleb01,
Coburn-Moyer-Singer73,Deift-Simon83,Favard28,Levenshtam03:eng,
Levitan-Zhikov82:eng,Mukhamadiev71:eng,Mukhamadiev72a:eng,
Pankov81:eng,Pankov90:eng,Pankov13,Rabinovich-Roch-Silberman07,Shubin78:eng,Shubin79:eng,Simon82,
Slyusarchuk81,Slyusarchuk08,Wahlberg12} and other works. Equations and operators with almost periodic `coefficients' often arise in applications for the following reason. It is known~\cite[p.~8]{Levitan-Zhikov82:eng} that if the flow generated by a differential equation is equicontinuous and the image of a solution is relatively compact, then the solution is an almost periodic function. Therefore the linearization of the equation along such a solution gives an equation with almost periodic `coefficients'.

It is almost evident that the inverse of an almost periodic operator is also almost periodic (Theorem~\ref{t:6.5.2}). In this paper, we mainly discuss the structure of inverses of integral operators with almost periodic kernels. Our main result (Theorem~\ref{t:main2}) states that if the Fourier series of the kernel of an integral operator converges absolutely, then the kernel of the inverse operator possesses the same property.
As an auxiliary result, we prove that the inverse of an integral operator with an almost periodic kernel is also an integral operator with an almost periodic kernel (Theorem~\ref{t:main}).

It is natural to formulate and discuss these problems using the language of full~\cite[ch.~1,~\S~1.4]{Bourbaki_Theories_Spectrales:eng} or inverse closed~\cite[p.~183]{Grochenig10} subalgebras. The history of full subalgebras have its origin in Wiener's theorem on absolutely convergent Fourier series~\cite{Wiener32}. For further results related to inverse closed classes, see~\cite{Balan-Krishtal10,Baskakov90:eng,Baskakov97b:eng,Baskakov-Krishtal14,
Beltita-Beltita15,Bickel-Lindner11:eng,Farrell-Strohmer10,Feichtinger83,
Fendler-Grochenig-Leinert08,Fendler-Leinert16,Fernandez-Torres-Karlovich17,Goldstein99,
Grochenig10,Grochenig-Klotz10,Grochenig-Leinert06,Grochenig-Rzeszotnik-Strohmer10,
Krishtal-Okoudjou08,Kurbatov-MZ88:eng,Kurbatov-Kuznetsova16,Mantoiu15,Rabinovich-Roch04,Sun05,Sun11} and references therein.

The paper is organized as follows. In Section~\ref{s:Notation}, we recall and specify the notation and terminology. In Section~\ref{s:AP operators} we recall the definitions of an almost periodic function and an almost periodic operator. In Sections~\ref{s:APW},~\ref{s:int op in L infty}, and~\ref{s:CN_1} we describe the auxiliary facts which we use in the proof: the inverse closedness of the general algebra of almost periodic operators having an absolutely convergent Fourier series (Theorem~\ref{t:B_{APW} is full}), some technical results concerning integral operators acting in $L_\infty$,
and the inverse closedness of the algebra of integral operators whose kernels vary continuously in $L_1$-norm (Theorem~\ref{t:fin}). In Sections~\ref{s:N_1,AP} we prove the inverse closedness of the algebra of integral operators with almost periodic kernels (Theorem~\ref{t:main}); the proof essentially uses Theorem~\ref{t:fin}. Finally, in Section~\ref{s:APWi}, we prove the inverse closedness of the algebra of integral operators with almost periodic kernels whose Fourier series converges absolutely (Theorem~\ref{t:main2}); the proof is based on Theorems~\ref{t:B_{APW} is full} and~\ref{t:main}.

\section{General notation and terminology}\label{s:Notation}
Let $X$ and $Y$ be complex Banach spaces. We denote by $\mathoo B(X,Y)$ the space of all bounded
linear operators acting from $X$ to $Y$. If $X=Y$ we use the brief notation $\mathoo B(X)$.
We denote by $\mathbf1\in\mathoo B(X)$ the identity operator.

As usual, $\mathbb Z$ is the set of all integers and $\mathbb N$ is the set of all
positive integers.
Let $c\in\mathbb N$. The linear space $\mathbb
R^c$ is considered with the Euclidian norm $|\cdot|$ and the associated inner product $\langle\cdot,\cdot\cdot\rangle$.

Let $\mathbb E$ be a complex Banach space with the norm $|\cdot|$; in Sections~\ref{s:AP operators} and~\ref{s:APW} we assume that $\mathbb E$ is arbitrary, but in subsequent sections we assume that $\mathbb E$ is finite dimensional and Hilbert. We
denote by $\mathscr L_p=\mathscr L_p(\mathbb R^c,\mathbb E)$, $1\le p<\infty$, the space
of all measurable functions $u:\,\mathbb R^c\to\mathbb E$ bounded in the semi-norm
\begin{equation*}
\Vert u\Vert=\Vert u\Vert_{L_p}=\Bigl(\int_{\mathbb R^c}|u(x)|^p\,dx\Bigr)^{1/p},
\end{equation*}
and we denote by $\mathscr L_\infty=\mathscr L_\infty(\mathbb R^c,\mathbb E)$ the space
of all measurable essentially bounded functions $u:\,\mathbb R^c\to\mathbb E$ with the
semi-norm
\begin{equation*}
\Vert u\Vert=\Vert u\Vert_{L_\infty}=\esssup|u(x)|.
\end{equation*}
Finally, we denote by $L_p=L_p(\mathbb R^c)=L_p(\mathbb R^c,\mathbb E)$, $1\le p\le\infty$, the Banach
space of all classes of functions $u\in\mathscr L_p$ with the identification almost
everywhere. For more details, see~\cite{Bourbaki_Integration:fr}.
Usually they do not distinguish the spaces $\mathscr L_p$ and $L_p$.

For any $h\in\mathbb R^c$, we define the \emph{shift operator}
\begin{equation}\label{e:S_h}
\bigl(S_hu\bigr)(x)=u(x-h),
\end{equation}
and for any $\omega\in\mathbb R^c$, we define the \emph{oscillation {\rm(}modulation{\rm)} operator}
\begin{equation*}
\bigl(\Psi_\omega u\bigr)(x)=e^{i\langle\omega,x\rangle}u(x).
\end{equation*}
Clearly, the operators $S_h$ and $\Psi_\omega$ act in $L_p(\mathbb R^c,\mathbb E)$ and their norms equal 1.

\begin{proposition}\label{p:SPsi=PsiS}
For all $h,\omega\in\mathbb R^c$,
\begin{equation*}
S_h\Psi_{\omega}=e^{-i\langle\omega,h\rangle}\Psi_{\omega}S_{h}.
\end{equation*}
\end{proposition}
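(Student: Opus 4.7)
The plan is a direct verification on an arbitrary element $u\in L_p(\mathbb R^c,\mathbb E)$, using only the definitions~\eqref{e:S_h} of $S_h$ and of $\Psi_\omega$ and the bilinearity of $\langle\cdot,\cdot\rangle$. Since both sides of the claimed identity are bounded operators on $L_p$, it is enough to show that they agree pointwise almost everywhere on any $u$.

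First I would apply $S_h\Psi_\omega$ to $u$: by definition $(\Psi_\omega u)(y)=e^{i\langle\omega,y\rangle}u(y)$, and then shifting gives
\begin{equation*}
\bigl(S_h\Psi_\omega u\bigr)(x)=\bigl(\Psi_\omega u\bigr)(x-h)=e^{i\langle\omega,x-h\rangle}u(x-h).
\end{equation*}
Next I would compute the right-hand side in the same way: $(S_h u)(x)=u(x-h)$, so
\begin{equation*}
\bigl(\Psi_\omega S_h u\bigr)(x)=e^{i\langle\omega,x\rangle}u(x-h).
\end{equation*}
The final step is to split the exponent using linearity of the inner product, $\langle\omega,x-h\rangle=\langle\omega,x\rangle-\langle\omega,h\rangle$, and factor out the scalar $e^{-i\langle\omega,h\rangle}$, which yields the asserted commutation relation.

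There is no real obstacle: the statement is a pointwise one-line computation that encodes the standard Weyl-type commutation between translation and modulation, and the only thing to be careful about is the sign in the exponent, which comes from the $-h$ in the definition of $S_h$.
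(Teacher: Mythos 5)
Your computation is correct and is exactly the direct calculation the paper invokes: applying both sides to $u$ gives $e^{i\langle\omega,x-h\rangle}u(x-h)$ on the left and $e^{i\langle\omega,x\rangle}u(x-h)$ on the right, and splitting the exponent produces the factor $e^{-i\langle\omega,h\rangle}$. The paper's proof is simply ``by direct calculations,'' so your argument matches it in substance while spelling out the details.
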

\begin{proof}
The proof is by direct calculations.
\end{proof}

All algebras~\cite{Bourbaki_Theories_Spectrales:eng} are considered over the field of complex numbers. A complete normed algebra is called \emph{Banach}. We denote the unit of an algebra by the symbol~$\mathbf1$. If an algebra has a unit, it is called \emph{unital}.

A subset $\mathoo R$ of an algebra $\mathoo B$ is called a \emph{subalgebra} if $\mathoo R$ is stable under the algebraic operations (addition, scalar multiplication, and multiplication), i.~e. $A+B,\lambda A,AB\in\mathoo R$ for all $A,B\in\mathoo R$ and $\lambda\in\mathbb C$.
If the unit $\mathbf1$ of an algebra $\mathoo B$ belongs to its subalgebra $\mathoo R$, then $\mathoo R$ is called a \emph{unital subalgebra}. Any non-unital subalgebra $\mathoo R$ of an algebra $\mathoo B$ can be extended to a unital subalgebra, which we denote by~$\widetilde{\mathoo R}$.

A unital subalgebra $\mathoo R$ of a unital algebra $\mathoo B$ is called \emph{full}~\cite[ch.~1,~\S~1.4]{Bourbaki_Theories_Spectrales:eng} or \emph{inverse closed}~\cite[p.~183]{Grochenig10} if every $B\in\mathoo R$ that is invertible in
$\mathoo B$ is also invertible in $\mathoo R$. This definition is equivalent to the following one: for any $B\in\mathoo R$, the existence of $B^{-1}\in\mathoo B$ such that $BB^{-1}=B^{-1}B=\mathbf 1$ implies that $B^{-1}\in\mathoo R$.

\section{Almost periodic functions and operators}\label{s:AP operators}
A subset of a Banach space is called \emph{relatively compact} if its closure is compact with respect to the norm topology.

Let $X$ be a Banach space, and $T$ be a topological space. We denote by $C(T,X)$ the Banach space of all bounded continuous functions $u:\,T\to X$ with the norm
\begin{equation*}
\Vert u\Vert=\Vert u\Vert_C=\sup_{x\in\mathbb R^c}\Vert u(x)\Vert.
\end{equation*}
The main example is the space $C=C(\mathbb R^c,X)$.
Clearly, the operators $S_h$, $h\in\mathbb R^c$, defined by formula~\eqref{e:S_h} act in $C$ and $\lVert S_h\rVert=1$.
A function $x\in C(\mathbb R^c,X)$ is called \emph{almost periodic}~\cite{Amerio-Prouse71,Dixmier:fr,Levitan-Zhikov82:eng,Pankov90:eng} if the set $\{\,S_hx:\,h\in\mathbb R^c\,\}\subseteq C$ is relatively compact.
We denote by $C_{AP}=C_{AP}(\mathbb R^c,X)$ the subset of $C$ consisting of all almost periodic functions. Clearly, $C_{AP}$ is a closed subspace of $C$.

An operator $T\in\mathoo B\bigl(L_p(\mathbb R^c)\bigr)$, $1\le p\le\infty$, is called~\cite{Coburn-Moyer-Singer73,Slyusarchuk81} \emph{almost periodic} if the family
\begin{equation}\label{e:T{h}}
T\{h\}=S_hTS_{-h},\qquad h\in\mathbb R^c,
\end{equation}
where $S_h$ is defined by formula~\eqref{e:S_h}, continuously (in the norm) depends on $h$ and is relatively compact in $\mathoo B\bigl(L_p(\mathbb R^c)\bigr)$.
We denote the set of all almost periodic operators $T\in\mathoo B\bigl(L_p(\mathbb R^c)\bigr)$ by $\mathoo B_{AP}(L_p)=\mathoo B_{AP}\bigl(L_p(\mathbb R^c)\bigr)$.

\begin{theorem}[{\rm see, e.g.,~\cite[Theorem~6.5.2]{Kurbatov99}}]\label{t:6.5.2}
The set $\mathoo B_{AP}(L_p)$ is a closed full subalgebra of the algebra $\mathoo B(L_p)$.
\end{theorem}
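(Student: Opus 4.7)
The plan is to verify in succession that $\mathoo B_{AP}(L_p)$ contains $\mathbf 1$, is stable under the algebraic operations, is closed in the operator norm, and is inverse closed. The unit is trivially almost periodic because $\mathbf 1\{h\}=\mathbf 1$ for all $h$. The subalgebra property rests on the identities
\begin{equation*}
(A+B)\{h\}=A\{h\}+B\{h\},\qquad (AB)\{h\}=A\{h\}B\{h\},
\end{equation*}
the second of which uses $S_{-h}S_h=\mathbf 1$. From these, joint continuity and relative compactness transfer from the maps $h\mapsto A\{h\}$ and $h\mapsto B\{h\}$ to their pointwise sum and product by a standard subsequence extraction (together with the uniform bound $\|A\{h\}\|\le\|A\|$, which follows from $\|S_h\|=1$).

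For closedness I would exploit the same isometric invariance: if $T_n\to T$ in norm with each $T_n$ almost periodic, then $\|T\{h\}-T_n\{h\}\|\le\|T-T_n\|$ holds uniformly in $h$. A routine $3\varepsilon$-argument then yields continuity of $h\mapsto T\{h\}$, and total boundedness of $\{T\{h\}:h\in\mathbb R^c\}$ follows by approximating it within $\varepsilon$ by the totally bounded set $\{T_n\{h\}:h\in\mathbb R^c\}$ for sufficiently large~$n$.

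The interesting step is fullness. Let $T\in\mathoo B_{AP}(L_p)$ be invertible in $\mathoo B(L_p)$. Applying the product identity to $TT^{-1}=T^{-1}T=\mathbf 1$ gives
\begin{equation*}
T^{-1}\{h\}=\bigl(T\{h\}\bigr)^{-1},\qquad h\in\mathbb R^c.
\end{equation*}
The decisive observation is the uniform bound $\|T^{-1}\{h\}\|\le\|T^{-1}\|$, obtained directly from $\|S_h\|=1$. Continuity of $h\mapsto T^{-1}\{h\}$ then follows from the standard Neumann-series perturbation estimate for inverses, using this uniform bound together with the already established continuity of $h\mapsto T\{h\}$.

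It remains to show that $\{T^{-1}\{h\}:h\in\mathbb R^c\}$ is relatively compact; this is where the main potential obstacle lies, namely that a limit point of $\{T\{h\}\}$ might a priori fail to be invertible. I would defuse it using the same uniform bound: given a sequence $h_n$, pass to a subsequence along which $T\{h_n\}\to A$ in norm, write $T\{h_n\}=A+R_n$ with $\|R_n\|\to 0$, and invoke $\|\bigl(T\{h_n\}\bigr)^{-1}\|\le\|T^{-1}\|$ in a Neumann-series expansion to conclude that $A$ is invertible and $\bigl(T\{h_n\}\bigr)^{-1}\to A^{-1}$. Equivalently, $T^{-1}\{h_n\}$ converges, which establishes the desired relative compactness and completes the plan.
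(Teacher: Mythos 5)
Your proposal is correct, and it is the standard direct verification from the definition that the paper itself invokes (the paper's proof is the single sentence ``immediately follows from the definition,'' deferring details to the cited reference); your key steps --- the identity $(AB)\{h\}=A\{h\}B\{h\}$, the uniform bound $\lVert T\{h\}\rVert\le\lVert T\rVert$, and the Neumann-series argument showing that limit points of $\{T\{h\}\}$ remain invertible with convergent inverses --- are exactly the ones needed to flesh that out.
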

\begin{proof}
The proof immediately follows from the definition of an almost periodic operator.
\end{proof}

\section{Almost periodic operators\\ with absolutely convergent Fourier series}\label{s:APW}
In this Section we assume that $\mathbb E$ is an arbitrary Banach space.

We call an operator $A\in\mathoo B(L_p)$ \emph{shift invariant} if
\begin{equation*}
AS_h=S_hA,\qquad h\in\mathbb R^c.
\end{equation*}
We denote by $\mathoo A(L_p)$ the set of all shift invariant operators $A\in\mathoo B(L_p)$.

\begin{example}\label{ex:mathoo A}
(a) The \emph{integral convolution operator}
\begin{equation*}
\bigl(Gu\bigr)(x)=\int_{\mathbb R^c}g(x-y)u(y)\,dy,
\end{equation*}
where $g\in L_1\bigl(\mathbb R^c,\mathoo B(\mathbb E)\bigr)$, belongs to $\mathoo A(L_p)$, $1\le p\le\infty$, see, e.g.,~\cite[Corollary 4.4.11]{Kurbatov99}.
(b) More generally, the convolution operator
\begin{equation*}
\bigl(Tu\bigr)(x)=\int_{\mathbb R^c}d\mu(x-y)\,u(y),
\end{equation*}
with a bounded operator-valued measure $\mu$ on $\mathbb R^c$ belongs to $\mathoo A(L_p)$, $1\le p\le\infty$, see~\cite[Theorem 4.4.4]{Kurbatov99} for more details.
(c) Let the space $\mathbb E$ be Hilbert, $\mathcal F:\,L_2(\mathbb R^c)\to L_2(\mathbb R^c)$ be the Fourier transform, and $\xi\in L_\infty\bigl(\mathbb R^c,\mathoo B(\mathbb E)\bigr)$. Then the operator $u\mapsto \mathcal F^{-1}(\xi\cdot\mathcal Fu)$ belongs to~$\mathoo A(L_2)$.
\end{example}

\begin{proposition}\label{p:ooA is full}
The set $\mathoo A(L_p)$ is a full closed subalgebra of\/ $\mathoo B(L_p)$, $1\le p\le\infty$.
\end{proposition}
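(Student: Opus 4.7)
The plan is to verify the three required properties in turn: that $\mathoo A(L_p)$ is a subalgebra, that it is closed, and that it is full. Each of these follows from routine manipulations with the defining commutation relation $AS_h = S_hA$, so the proposition is essentially a bookkeeping statement, and I do not expect a genuine obstacle anywhere.

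First I would check the subalgebra axioms. Given $A,B\in\mathoo A(L_p)$ and $\lambda\in\mathbb C$, the relations
\begin{equation*}
(A+B)S_h = AS_h+BS_h = S_hA+S_hB = S_h(A+B),
\end{equation*}
$(\lambda A)S_h = S_h(\lambda A)$, and $(AB)S_h = A(BS_h) = A(S_hB) = (AS_h)B = S_h(AB)$, all for each $h\in\mathbb R^c$, show that $\mathoo A(L_p)$ is stable under the algebraic operations. The identity $\mathbf1$ obviously commutes with every $S_h$, so $\mathoo A(L_p)$ is a unital subalgebra.

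Next I would show closedness. If $A_n\in\mathoo A(L_p)$ converges to $A$ in the operator norm of $\mathoo B(L_p)$, then for each fixed $h\in\mathbb R^c$, using $\lVert S_h\rVert=1$,
\begin{equation*}
\lVert AS_h - S_hA\rVert \le \lVert (A-A_n)S_h\rVert + \lVert A_nS_h - S_hA_n\rVert + \lVert S_h(A_n-A)\rVert = 2\lVert A-A_n\rVert \to 0,
\end{equation*}
so $AS_h=S_hA$, i.e., $A\in\mathoo A(L_p)$.

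Finally, for fullness, suppose $A\in\mathoo A(L_p)$ is invertible in $\mathoo B(L_p)$. For any $h\in\mathbb R^c$, multiplying the identity $AS_h=S_hA$ on both sides by $A^{-1}$ yields
\begin{equation*}
S_hA^{-1} = A^{-1}AS_hA^{-1} = A^{-1}S_hAA^{-1} = A^{-1}S_h,
\end{equation*}
so $A^{-1}\in\mathoo A(L_p)$. The step that might superficially look like the main point, the fullness property, is actually the easiest: shift invariance is just commutation with a group of bounded operators, and commutation always passes to inverses inside the ambient algebra.
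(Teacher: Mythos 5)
Your argument is correct and is exactly the routine verification the paper has in mind when it declares the proof ``evident'': commutation with the isometries $S_h$ is preserved under sums, products, norm limits, and inversion. Nothing is missing, and all three steps (subalgebra, closedness, fullness) are carried out correctly.
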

\begin{proof}
The proof is evident.
\end{proof}

\begin{proposition}\label{p:Psi A Psi}
Let $A\in\mathoo A(L_p)$, $1\le p\le\infty$. Then $\Psi_{-\omega}A\Psi_\omega\in\mathoo A(L_p)$ for any $\omega\in\mathbb R^c$.
\end{proposition}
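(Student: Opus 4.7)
The plan is to verify directly, from the definition, that $B := \Psi_{-\omega} A \Psi_\omega$ commutes with every shift $S_h$, which places it in $\mathoo A(L_p)$. The computation rests on two ingredients that are already available: the shift-invariance of $A$ (giving $S_h A = A S_h$ for every $h\in\mathbb R^c$) and the commutation identity of Proposition~\ref{p:SPsi=PsiS}, which permits moving $S_h$ past $\Psi_{\pm\omega}$ at the cost of a unimodular scalar factor.

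Concretely, I would start with $S_h B = S_h \Psi_{-\omega} A \Psi_\omega$ and slide $S_h$ to the right through all three factors. Applying Proposition~\ref{p:SPsi=PsiS} with $\omega$ replaced by $-\omega$ converts the leftmost pair into $e^{i\langle\omega,h\rangle}\Psi_{-\omega}S_h$; then shift-invariance of $A$ moves $S_h$ past $A$; and a second application of Proposition~\ref{p:SPsi=PsiS} converts $S_h\Psi_\omega$ into $e^{-i\langle\omega,h\rangle}\Psi_\omega S_h$. The two scalar factors $e^{i\langle\omega,h\rangle}$ and $e^{-i\langle\omega,h\rangle}$ cancel, leaving $S_h B = B S_h$. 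Since $h\in\mathbb R^c$ is arbitrary and $B$ is plainly bounded on $L_p$ (as a composition of three bounded operators, with $\lVert\Psi_{\pm\omega}\rVert=1$), this gives $B\in\mathoo A(L_p)$.

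There is no real obstacle here; the statement is an essentially formal consequence of the commutation rule between shifts and modulations. The only thing to be careful about is the bookkeeping of signs in the exponents when applying Proposition~\ref{p:SPsi=PsiS} to $\Psi_{-\omega}$ versus $\Psi_{\omega}$, and this is precisely what makes the two scalars cancel.
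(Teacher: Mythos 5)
Your proof is correct and is exactly the argument the paper intends: the paper's proof consists of the single line ``follows from Proposition~\ref{p:SPsi=PsiS}'', and your computation (two applications of that commutation identity plus the shift-invariance of $A$, with the scalars $e^{\pm i\langle\omega,h\rangle}$ cancelling) is just that argument written out in full. Nothing is missing.
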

\begin{proof}
The proof follows from Proposition~\ref{p:SPsi=PsiS}.
\end{proof}

We denote by $\mathoo B_{APW}(L_p)$, $1\le p\le\infty$, the set of all operators of the form
\begin{equation}\label{e:N=sum Psi A'}
K=\sum_{\omega\in\mathbb R^c}\Psi_{\omega}A_{\omega},
\end{equation}
where $A_{\omega}\in\mathoo A(L_p)$ and at most a countable number of operators $A_{\omega}$ are nonzero, with
\begin{equation*}
\sum_{\omega\in\mathbb R^c}\lVert A_{\omega}\rVert<\infty.
\end{equation*}
We call an operator $K\in\mathoo B_{APW}(L_p)$ an \emph{almost periodic operator with absolutely convergent Fourier series}.

\begin{proposition}\label{p:B_APW is full}
The set $\mathoo B_{APW}(L_p)$ is a subalgebra of\/ $\mathoo B(L_p)$, $1\le p\le\infty$.
\end{proposition}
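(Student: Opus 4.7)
The plan is to verify closure under the three algebraic operations. Stability under addition and scalar multiplication is essentially immediate: if $K=\sum_{\omega}\Psi_{\omega}A_{\omega}$ and $L=\sum_{\omega}\Psi_{\omega}B_{\omega}$ are two elements of $\mathoo B_{APW}(L_p)$, then $\lambda K+L=\sum_{\omega}\Psi_{\omega}(\lambda A_{\omega}+B_{\omega})$, and Proposition~\ref{p:ooA is full} gives $\lambda A_{\omega}+B_{\omega}\in\mathoo A(L_p)$, while the triangle inequality together with $\sum\lVert A_{\omega}\rVert<\infty$ and $\sum\lVert B_{\omega}\rVert<\infty$ gives summability of the new norms.

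The substantive step is closure under multiplication. Here I would use two commutation identities: first, $\Psi_{\omega}\Psi_{\eta}=\Psi_{\omega+\eta}$ by direct calculation, and second, the more delicate relation
\begin{equation*}
A_{\omega}\Psi_{\eta}=\Psi_{\eta}(\Psi_{-\eta}A_{\omega}\Psi_{\eta}),
\end{equation*}
where by Proposition~\ref{p:Psi A Psi} the operator $C_{\omega,\eta}:=\Psi_{-\eta}A_{\omega}\Psi_{\eta}$ lies in $\mathoo A(L_p)$ and, since $\lVert\Psi_{\pm\eta}\rVert=1$, satisfies $\lVert C_{\omega,\eta}\rVert\le\lVert A_{\omega}\rVert$. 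Expanding formally term by term,
\begin{equation*}
KL=\sum_{\omega,\eta}\Psi_{\omega}A_{\omega}\Psi_{\eta}B_{\eta}=\sum_{\omega,\eta}\Psi_{\omega+\eta}(C_{\omega,\eta}B_{\eta}),
\end{equation*}
and regrouping by $\nu=\omega+\eta$ suggests the representation $KL=\sum_{\nu}\Psi_{\nu}D_{\nu}$ with
\begin{equation*}
D_{\nu}=\sum_{\omega+\eta=\nu}C_{\omega,\eta}B_{\eta}.
\end{equation*}

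The main work is justifying this rearrangement and verifying the three defining conditions of $\mathoo B_{APW}(L_p)$. Absolute convergence is immediate: the double sum of norms is dominated by
\begin{equation*}
\sum_{\omega,\eta}\lVert C_{\omega,\eta}\rVert\,\lVert B_{\eta}\rVert\le\Bigl(\sum_{\omega}\lVert A_{\omega}\rVert\Bigr)\Bigl(\sum_{\eta}\lVert B_{\eta}\rVert\Bigr)<\infty,
\end{equation*}
so the original double series converges absolutely in $\mathoo B(L_p)$ and any rearrangement is legitimate. The same bound gives $\sum_{\nu}\lVert D_{\nu}\rVert<\infty$. Since only countably many $A_{\omega}$ and $B_{\eta}$ are nonzero, the index set $\{\omega+\eta\}$ is countable, so at most countably many $D_{\nu}$ are nonzero. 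Finally, each $D_{\nu}$ lies in $\mathoo A(L_p)$: every summand $C_{\omega,\eta}B_{\eta}$ is a product of two elements of the subalgebra $\mathoo A(L_p)$ (Propositions~\ref{p:ooA is full} and~\ref{p:Psi A Psi}), and the partial sums converge in norm, so closedness of $\mathoo A(L_p)$ seals the argument.

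The only genuine obstacle I anticipate is purely bookkeeping — making sure the unordered double sum is handled cleanly (since the index set in~\eqref{e:N=sum Psi A'} is uncountable a priori) and that the rearrangement into the $\nu$-indexed sum is rigorously justified; once one invokes absolute convergence in the Banach space $\mathoo B(L_p)$, this reduces to the standard Cauchy-product argument and no new analytic input is needed.
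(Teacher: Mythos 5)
Your proof is correct and follows exactly the route the paper intends: the paper's one-line proof ("follows from Propositions~\ref{p:Psi A Psi} and~\ref{p:ooA is full}") is precisely the commutation $\Psi_{\omega}A_{\omega}\Psi_{\eta}=\Psi_{\omega+\eta}(\Psi_{-\eta}A_{\omega}\Psi_{\eta})$ combined with the closedness of $\mathoo A(L_p)$ and absolute convergence, which you have simply written out in full. No discrepancy.
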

\begin{proof}
The proof follows from Propositions~\ref{p:Psi A Psi} and~\ref{p:ooA is full}.
\end{proof}

For $K\in\mathoo B_{APW}(L_p)$, in accordance with~\eqref{e:T{h}}, we set
\begin{equation}\label{e:S_hKS_-h}
K\{h\}=S_hKS_{-h},\qquad h\in\mathbb R^c.
\end{equation}

\begin{proposition}\label{p:N(H)=}
Let an operator $K\in\mathoo B_{APW}(L_p)$ has the form~\eqref{e:N=sum Psi A'}. Then
\begin{equation}\label{e:K{h}}
K\{h\}=\sum_{\omega\in\mathbb R^c} e^{-i\langle\omega,h\rangle}\Psi_{\omega}A_{\omega},\qquad h\in\mathbb R^c.
\end{equation}
\end{proposition}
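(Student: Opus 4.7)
The plan is a direct, termwise computation: apply conjugation by $S_h$ to each summand $\Psi_\omega A_\omega$ of $K$ and use the two commutation relations already at our disposal. By Proposition~\ref{p:SPsi=PsiS} we have $S_h\Psi_\omega=e^{-i\langle\omega,h\rangle}\Psi_\omega S_h$, and by the very definition of $\mathoo A(L_p)$ each $A_\omega$ commutes with every shift, so $S_h A_\omega S_{-h}=A_\omega$. Combining these two facts gives
\begin{equation*}
S_h\Psi_\omega A_\omega S_{-h}=e^{-i\langle\omega,h\rangle}\Psi_\omega S_h A_\omega S_{-h}=e^{-i\langle\omega,h\rangle}\Psi_\omega A_\omega,
\end{equation*}
which is exactly the $\omega$-summand on the right-hand side of~\eqref{e:K{h}}.

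What remains is to justify interchanging the (countable) sum with the left and right multiplication by $S_h$ and $S_{-h}$. This is immediate from the hypothesis $\sum_\omega\lVert A_\omega\rVert<\infty$, since $\lVert\Psi_\omega\rVert=\lVert S_h\rVert=\lVert S_{-h}\rVert=1$, so the series defining $K$ converges absolutely in the operator norm of $\mathoo B(L_p)$, and left/right multiplication by bounded operators is continuous. Therefore one may apply $S_h(\cdot)S_{-h}$ to~\eqref{e:N=sum Psi A'} term by term, and substituting the identity above yields~\eqref{e:K{h}}.

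I do not foresee a genuine obstacle here: the proposition is essentially a bookkeeping consequence of Proposition~\ref{p:SPsi=PsiS} together with shift-invariance of the $A_\omega$, and the absolute convergence furnished by the definition of $\mathoo B_{APW}(L_p)$ is strong enough to legitimise the termwise manipulation. The only minor care is to note that at most countably many $A_\omega$ are nonzero, so the ``sum over $\mathbb R^c$'' is in fact a genuine (absolutely convergent) series, and that the factor $e^{-i\langle\omega,h\rangle}$ has modulus one, so the transformed series still satisfies the absolute-convergence requirement and hence $K\{h\}\in\mathoo B_{APW}(L_p)$ as well.
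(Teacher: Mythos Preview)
Your proof is correct and follows exactly the approach the paper takes: the paper simply writes ``The proof follows from Proposition~\ref{p:SPsi=PsiS},'' and your argument spells out precisely that computation (together with the shift-invariance of the $A_\omega$ and the absolute convergence needed to pass $S_h(\cdot)S_{-h}$ through the sum). Nothing is missing.
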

\begin{proof}
The proof follows from Proposition~\ref{p:SPsi=PsiS}.
\end{proof}

We denote by $\mathbb U$ the multiplicative group $\{\,z\in\mathbb C:\,|z|=1\,\}$. A function $\varkappa:\,\mathbb R^c\to\mathbb U$ is called~\cite[ch.~II, \S~1.1]{Bourbaki_Theories_Spectrales:eng},~\cite[22.15]{Hewitt-Ross-1:eng} a \emph{character} of the group $\mathbb R^c$ if
\begin{align*}
\varkappa(x+y)&=\varkappa(x)\varkappa(y),&x,y&\in\mathbb R^c,\\
|\varkappa(x)|&=1,&x&\in\mathbb R^c.
\end{align*}
We denote by $\mathbb X_b=\mathbb X_b(\mathbb R^c)$ the set of \emph{all} characters and call elements of $\mathbb X_b$ (\emph{discontinuous{\rm)} characters} of $\mathbb R^c$.
Sometimes we will denote the action of a character $\varkappa\in\mathbb X_b$ on $x\in\mathbb R^c$ by the symbol $\langle x,\varkappa\rangle$.

We denote by $\mathbb X=\mathbb X(\mathbb R^c)$ the set of all \emph{continuous} (with respect to the usual topology on $\mathbb R^c$) \emph{characters}. We endow $\mathbb X_b=\mathbb X_b(\mathbb R^c)$ with the topology of pointwise convergence. And we endow $\mathbb X=\mathbb X(\mathbb R^c)$ with the topology of uniform convergence on compact sets. Clearly, the topology on $\mathbb X_b=\mathbb X_b(\mathbb R^c)$ can also be interpreted as a topology of uniform convergent on compact sets provided one considers the group $\mathbb R^c$ with the discrete topology.

We define the sum of elements of $\mathbb X_b$ in the pointwise sense:
\begin{equation*}
(\varkappa_1+\varkappa_2)(x)=\varkappa_1(x)\varkappa_2(x).
\end{equation*}

\begin{proposition}[{\rm\cite[23.2]{Hewitt-Ross-1:eng}}]\label{p:X(G) is a group}
The set $\mathbb X_b=\mathbb X_b(\mathbb R^c)$ of all characters of\/ $\mathbb R^c$ is an abelian group, and $\mathbb X=\mathbb X(\mathbb R^c)$ is its subgroup.
\end{proposition}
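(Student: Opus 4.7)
The plan is to verify the group axioms for $(\mathbb X_b,+)$ directly from the definition of a character, and then observe that each of the operations preserves continuity, so $\mathbb X$ is a subgroup.

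First I would check closure. Given $\varkappa_1,\varkappa_2\in\mathbb X_b$, define $\varkappa_1+\varkappa_2$ by pointwise multiplication. Then
\begin{equation*}
(\varkappa_1+\varkappa_2)(x+y)=\varkappa_1(x+y)\varkappa_2(x+y)=\varkappa_1(x)\varkappa_1(y)\varkappa_2(x)\varkappa_2(y)=(\varkappa_1+\varkappa_2)(x)\cdot(\varkappa_1+\varkappa_2)(y),
\end{equation*}
using commutativity in $\mathbb C$, and $|(\varkappa_1+\varkappa_2)(x)|=|\varkappa_1(x)||\varkappa_2(x)|=1$; hence $\varkappa_1+\varkappa_2\in\mathbb X_b$.

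Next I would identify the neutral element and inverses. The constant function $\mathbf 0(x)\equiv 1$ is trivially a character and satisfies $\mathbf 0+\varkappa=\varkappa$ for every $\varkappa\in\mathbb X_b$. For $\varkappa\in\mathbb X_b$, set $(-\varkappa)(x):=\overline{\varkappa(x)}$; since $|\varkappa(x)|=1$ we have $\overline{\varkappa(x)}=\varkappa(x)^{-1}$ in $\mathbb U$, so $(-\varkappa)(x+y)=\overline{\varkappa(x)}\,\overline{\varkappa(y)}=(-\varkappa)(x)(-\varkappa)(y)$ and $|(-\varkappa)(x)|=1$, giving $-\varkappa\in\mathbb X_b$ and $\varkappa+(-\varkappa)=\mathbf 0$. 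Associativity and commutativity of $+$ in $\mathbb X_b$ are inherited pointwise from the associative commutative multiplication on $\mathbb C$. This establishes that $\mathbb X_b$ is an abelian group.

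Finally, to see that $\mathbb X$ is a subgroup I would use the subgroup criterion. If $\varkappa_1,\varkappa_2\in\mathbb X$ are continuous, then $\varkappa_1+\varkappa_2$ is continuous as a pointwise product of continuous functions, and $-\varkappa_1$ is continuous since complex conjugation is continuous on $\mathbb U$; the constant character $\mathbf 0$ is obviously continuous. Hence $\mathbb X\subseteq\mathbb X_b$ is stable under the group operations and contains the identity, so it is a subgroup.

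No step is a real obstacle: the statement is a direct verification, and the only things to be careful about are using $|\varkappa(x)|=1$ to identify the pointwise inverse with complex conjugation, and explicitly invoking continuity of multiplication and conjugation on $\mathbb C$ (equivalently on $\mathbb U$) for the subgroup part.
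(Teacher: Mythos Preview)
Your proposal is correct: the direct verification of closure, identity, inverses, associativity, and commutativity from the pointwise definition is exactly the standard argument, and your subgroup check via continuity of multiplication and conjugation is fine. The paper itself does not supply a proof for this proposition but simply cites \cite[23.2]{Hewitt-Ross-1:eng}; your write-up is essentially what one finds there.
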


Clearly, the zero element of the groups $\mathbb X_b$ and $\mathbb X$ is the function $0(x)\equiv1$.

\begin{proposition}[{\rm\cite[23.27f]{Hewitt-Ross-1:eng}}]\label{p:X(Rc)}
The group $\mathbb X(\mathbb R^c)$ is topologically isomorphic to the group $\mathbb R^c$. Namely, $\mathbb X=\mathbb X(\mathbb R^c)$ consists of the functions $\chi=\chi_\omega:\,\mathbb R^c\to\mathbb U$ of the form
\begin{equation*}
\chi(x)=\chi_\omega(x)=e^{i\langle x,\omega\rangle},
\end{equation*}
where $\omega$ runs over $\mathbb R^c$, and the mapping $J:\,\omega\mapsto\chi_\omega$ is an isomorphism.
\end{proposition}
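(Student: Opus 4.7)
The plan is to establish four things in turn: that $J\colon\omega\mapsto\chi_\omega$ is well-defined into $\mathbb X(\mathbb R^c)$, that it is a homomorphism, that it is a bijection, and that it is bicontinuous. The first two are immediate: each $\chi_\omega(x)=e^{i\langle x,\omega\rangle}$ is continuous with modulus $1$, and $\chi_{\omega_1+\omega_2}(x)=e^{i\langle x,\omega_1+\omega_2\rangle}=\chi_{\omega_1}(x)\chi_{\omega_2}(x)$. Injectivity of $J$ follows because $\chi_\omega\equiv 1$ forces $\langle x,\omega\rangle\in 2\pi\mathbb Z$ for all $x$, and continuity in $x$ together with $\omega\in\mathbb R^c$ gives $\omega=0$. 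So the main content is surjectivity and the topological statement.

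For surjectivity, I would first treat the one-dimensional case $c=1$. Given a continuous character $\chi\colon\mathbb R\to\mathbb U$, pick $a>0$ so small that $F(a):=\int_0^a\chi(t)\,dt\ne0$ (possible since $\chi(0)=1$ and $\chi$ is continuous). Using $\chi(x+t)=\chi(x)\chi(t)$ one checks
\begin{equation*}
F(x+a)-F(x)=\int_x^{x+a}\chi(t)\,dt=\chi(x)F(a),
\end{equation*}
so $\chi(x)=\bigl(F(x+a)-F(x)\bigr)/F(a)$. The right-hand side is $C^1$ in $x$, hence $\chi$ is $C^1$. Differentiating the functional equation at $0$ yields $\chi'(x)=\chi'(0)\chi(x)$, so $\chi(x)=e^{\chi'(0)x}$; since $|\chi(x)|=1$ for all $x\in\mathbb R$, one must have $\chi'(0)=i\omega$ for some $\omega\in\mathbb R$. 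The general case $c\ge 1$ then follows at once: given a continuous character $\chi$ on $\mathbb R^c$, each restriction $t\mapsto\chi(te_j)$ to a coordinate axis is a continuous character of $\mathbb R$ and hence equals $e^{i\omega_j t}$ for some $\omega_j\in\mathbb R$; by additivity of the character $\chi(x)=\prod_{j=1}^c\chi(x_je_j)=e^{i\langle x,\omega\rangle}$ with $\omega=(\omega_1,\dots,\omega_c)$.

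For the topological part, if $\omega_n\to\omega$ in $\mathbb R^c$ then on any compact $K\subseteq\mathbb R^c$ one has
\begin{equation*}
\bigl|\chi_{\omega_n}(x)-\chi_\omega(x)\bigr|\le\bigl|\langle x,\omega_n-\omega\rangle\bigr|\le\bigl(\sup_{x\in K}|x|\bigr)|\omega_n-\omega|,
\end{equation*}
so $\chi_{\omega_n}\to\chi_\omega$ uniformly on $K$. Conversely, if $\chi_{\omega_n}\to\chi_\omega$ uniformly on some fixed neighborhood of $0$, one recovers $\omega_n\to\omega$ coordinatewise by looking at $t\mapsto\chi_{\omega_n}(te_j)=e^{it\omega_{n,j}}$ and noting that uniform convergence of these exponentials on a small interval $[0,\delta]$ to $e^{it\omega_j}$ forces $\omega_{n,j}\to\omega_j$ (e.g.\ compare the first $t\in(0,\delta]$ at which the real part vanishes, or integrate against a suitable test function). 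Thus $J$ and $J^{-1}$ are continuous.

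The step I expect to be most delicate is the one-dimensional surjectivity, specifically the passage from a merely continuous $\chi$ to a differentiable one; the integration trick above is the clean way to obtain smoothness without assuming measurability or differentiability a priori. Once that is in hand, every other piece is routine.
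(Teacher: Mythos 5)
Your proof is correct, and it takes a genuinely different route from the paper, which offers no proof at all for this proposition but simply cites Hewitt and Ross [23.27f], where the character group of $\mathbb R$ (and of finite products) is computed inside the general machinery of duality for locally compact abelian groups. What you supply instead is the classical self-contained argument: the smoothing identity $\chi(x)=\bigl(F(x+a)-F(x)\bigr)/F(a)$ upgrades continuity to differentiability and reduces surjectivity to the ODE $\chi'=\chi'(0)\chi$, and restriction to coordinate axes handles $c>1$; the injectivity and homomorphism checks and the forward continuity estimate via $|e^{i\theta}-e^{i\phi}|\le|\theta-\phi|$ and Cauchy--Schwarz are all sound. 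The one place where you wave your hands is the continuity of $J^{-1}$; the cleanest completion of your own sketch is to observe that uniform convergence of $t\mapsto e^{it\mu_n}$ to $1$ on $[0,\delta]$ gives $\int_0^\delta e^{it\mu_n}\,dt\to\delta$, while that integral has modulus at most $2/|\mu_n|$ when $\mu_n\ne0$, so $(\mu_n)$ is bounded, and then any limit point $\mu$ satisfies $e^{it\mu}=1$ on $[0,\delta]$ and hence vanishes. Compared with the cited source, your route buys elementary self-containedness at the cost of being specific to $\mathbb R^c$; the Hewitt--Ross route places the statement inside Pontryagin duality, which is the framework the paper actually leans on later for the compactness of $\mathbb X_b(\mathbb R^c)$ and the density of $\mathbb X(\mathbb R^c)$ in it, so the citation is the more economical choice in context even though your argument is a perfectly valid substitute.
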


\begin{corollary}\label{c:N(chi)=N(h)}
The function $K\{\cdot\}:\,\mathbb R^c\to\mathoo B(L_p)$ defined by rule~\eqref{e:K{h}} is equivalent to the function $K\{\cdot\}:\,\mathbb X(\mathbb R^c)\to\mathoo B(L_p)$ defined by the formula
\begin{equation}\label{e:7''}
K\{\chi\}=\sum_{\omega\in\mathbb R^c}\langle-\omega,\chi\rangle\Psi_{\omega}A_{\omega},\qquad \chi\in\mathbb X(\mathbb R^c).
\end{equation}
More precisely, the `equivalence' means that the diagram
\begin{equation*}
 \begin{CD}
\mathbb R^c @>J>>\mathbb X(\mathbb R^c)\\
@VK\{\cdot\}VV @VVK\{\cdot\}V\\
\mathoo B(L_p)@=\mathoo B(L_p)
\end{CD}.
\end{equation*}
is commutative {\rm(}see Proposition~\ref{p:X(Rc)} for the definition of $J${\rm)}.
\end{corollary}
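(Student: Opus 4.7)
The plan is to verify the claimed commutativity of the diagram by directly substituting the formula for the isomorphism $J$ from Proposition~\ref{p:X(Rc)} into the right-hand side of~\eqref{e:7''} and recovering~\eqref{e:K{h}}. There is essentially nothing beyond unwinding definitions here; all that needs to be checked is that the pairing $\langle\cdot,\cdot\rangle$ used for the group $\mathbb X$ in~\eqref{e:7''} matches the Euclidean inner product on $\mathbb R^c$ used in~\eqref{e:K{h}} under $J$.

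First I would fix $h\in\mathbb R^c$ and set $\chi=J(h)=\chi_h$, so that by Proposition~\ref{p:X(Rc)}
\begin{equation*}
\chi_h(x)=\langle x,\chi_h\rangle=e^{i\langle x,h\rangle}\qquad\text{for every }x\in\mathbb R^c.
\end{equation*}
Applying this with $x=-\omega$ gives
\begin{equation*}
\langle-\omega,\chi_h\rangle=e^{i\langle-\omega,h\rangle}=e^{-i\langle\omega,h\rangle}.
\end{equation*}

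Next I would plug this into~\eqref{e:7''} coefficient by coefficient, obtaining
\begin{equation*}
K\{\chi_h\}=\sum_{\omega\in\mathbb R^c}\langle-\omega,\chi_h\rangle\Psi_{\omega}A_{\omega}=\sum_{\omega\in\mathbb R^c}e^{-i\langle\omega,h\rangle}\Psi_{\omega}A_{\omega}=K\{h\},
\end{equation*}
where the last equality is precisely~\eqref{e:K{h}}. This proves $K\{\cdot\}\circ J=K\{\cdot\}$, i.e., the commutativity of the diagram.

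The only technical point to mention is that all rearrangements are legitimate because the series converges absolutely in operator norm: the modulating factors $e^{-i\langle\omega,h\rangle}$ and $\langle-\omega,\chi_h\rangle$ have modulus $1$, and $\lVert\Psi_\omega\rVert=1$, so both sums are dominated term-by-term by $\sum_{\omega}\lVert A_\omega\rVert<\infty$, which is finite by the definition of $\mathoo B_{APW}(L_p)$. Hence no rearrangement or convergence issue arises, and there is no genuine obstacle — the corollary is a direct restatement of formula~\eqref{e:K{h}} using the language of characters afforded by Propositions~\ref{p:X(G) is a group} and~\ref{p:X(Rc)}.
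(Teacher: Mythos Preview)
Your proof is correct and follows essentially the same approach as the paper: the paper simply says ``The proof follows from Proposition~\ref{p:X(Rc)}'', and you have spelled out precisely what that amounts to, namely substituting $\chi=J(h)=\chi_h$ and checking termwise that $\langle-\omega,\chi_h\rangle=e^{-i\langle\omega,h\rangle}$.
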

\begin{proof}
The proof follows from Proposition~\ref{p:X(Rc)}.
\end{proof}

We denote by $\mathbb R^c_d$ the group $\mathbb R^c$ considered with the discrete topology.

\begin{proposition}\label{p:R_b is compact}
The group $\mathbb X_b(\mathbb R^c)$ is compact.
\end{proposition}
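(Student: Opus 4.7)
The plan is to realize $\mathbb{X}_b(\mathbb{R}^c)$ as a closed subset of a Tychonoff product of copies of $\mathbb{U}$ and invoke Tychonoff's theorem. Explicitly, each character $\varkappa:\mathbb{R}^c\to\mathbb{U}$ can be viewed as a point in the product space
\begin{equation*}
\mathbb{U}^{\mathbb{R}^c}=\prod_{x\in\mathbb{R}^c}\mathbb{U},
\end{equation*}
and the topology on $\mathbb{X}_b$ (pointwise convergence) is exactly the restriction of the product topology. Since $\mathbb{U}$ is compact Hausdorff, $\mathbb{U}^{\mathbb{R}^c}$ is compact by Tychonoff's theorem.

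Next I would verify that $\mathbb{X}_b$ is closed in $\mathbb{U}^{\mathbb{R}^c}$. For every pair $x,y\in\mathbb{R}^c$, the evaluation maps $\mathrm{ev}_x,\mathrm{ev}_y,\mathrm{ev}_{x+y}:\mathbb{U}^{\mathbb{R}^c}\to\mathbb{U}$ are continuous by definition of the product topology. Hence the function $\varkappa\mapsto\varkappa(x+y)\overline{\varkappa(x)\varkappa(y)}$ is continuous from $\mathbb{U}^{\mathbb{R}^c}$ to $\mathbb{U}$, and the set
\begin{equation*}
F_{x,y}=\bigl\{\varkappa\in\mathbb{U}^{\mathbb{R}^c}:\varkappa(x+y)=\varkappa(x)\varkappa(y)\bigr\}
\end{equation*}
is the preimage of the singleton $\{1\}$, hence closed. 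The condition $|\varkappa(x)|=1$ is automatic inside $\mathbb{U}^{\mathbb{R}^c}$, so
\begin{equation*}
\mathbb{X}_b=\bigcap_{x,y\in\mathbb{R}^c}F_{x,y},
\end{equation*}
which is an intersection of closed sets, hence closed.

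Finally, a closed subset of a compact space is compact, which yields the claim. The only mildly subtle point is confirming that the topology on $\mathbb{X}_b$ declared in the paper (pointwise convergence on $\mathbb{R}^c$ with the discrete topology, equivalently uniform convergence on compact subsets of $\mathbb{R}^c_d$, equivalently pointwise convergence on $\mathbb{R}^c$) really does coincide with the product topology on $\mathbb{U}^{\mathbb{R}^c}$ restricted to $\mathbb{X}_b$; this is immediate from the definitions. I do not expect any genuine obstacle here — the argument is the standard Tychonoff-plus-closedness proof used in Hewitt--Ross for the compactness of the dual of a discrete abelian group.
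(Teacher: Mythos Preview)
Your argument is correct and is essentially the paper's approach made explicit: the paper observes that $\mathbb{X}_b(\mathbb{R}^c)$ is the (continuous) character group of the discrete group $\mathbb{R}^c_d$ and then cites Hewitt--Ross, Theorem~23.17, whose proof is precisely the Tychonoff-plus-closedness argument you wrote out. So your proof simply unpacks the cited reference rather than taking a different route.
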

\begin{proof}
Clearly, the set $\mathbb X_b=\mathbb X_b(\mathbb R^c)$ can be interpreted as the set of all \emph{continuous} characters of $\mathbb R^c_d$. Now the statement follows from~\cite[Theorem 23.17]{Hewitt-Ross-1:eng}.
\end{proof}

For a more detailed structure of $\mathbb X_b=\mathbb X_b(\mathbb R^c)$, see, e.g.,~\cite[Example 4.2.1e]{Kurbatov99}.

\begin{proposition}[{\rm\cite[26.15]{Hewitt-Ross-1:eng}}]\label{p:Kroneker}
The subgroup $\mathbb X(\mathbb R^c)\simeq\mathbb R^c$ is dense in the group $\mathbb X_b(\mathbb R^c)$ {\rm(}in the topology of\/ $\mathbb X_b(\mathbb R^c)${\rm)}.
\end{proposition}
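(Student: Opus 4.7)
The plan is to verify density directly in the basic neighborhoods of the pointwise-convergence topology on $\mathbb X_b$. A basic open neighborhood of a character $\varkappa\in\mathbb X_b$ has the form
\begin{equation*}
\{\,\varkappa'\in\mathbb X_b:|\varkappa'(x_k)-\varkappa(x_k)|<\varepsilon,\;k=1,\dots,n\,\},
\end{equation*}
indexed by finite sets $\{x_1,\dots,x_n\}\subset\mathbb R^c$ and $\varepsilon>0$. By Proposition~\ref{p:X(Rc)}, it suffices to exhibit $\omega\in\mathbb R^c$ such that $e^{i\langle x_k,\omega\rangle}$ lies within $\varepsilon$ of $\varkappa(x_k)$ for every $k$.

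The first step is a reduction to a free $\mathbb Z$-basis. The subgroup $G$ of $\mathbb R^c$ generated by $x_1,\dots,x_n$ is finitely generated and torsion-free (as a subgroup of $\mathbb R^c$), hence is free abelian on some $\mathbb Z$-basis $e_1,\dots,e_r$ with $r\le n$. Writing each $x_k$ as an integer combination of the $e_j$ and using the multiplicativity of both $\varkappa$ and $\chi_\omega$, it is enough to approximate the values $\varkappa(e_1),\dots,\varkappa(e_r)\in\mathbb U$ \emph{simultaneously} by $e^{i\langle e_1,\omega\rangle},\dots,e^{i\langle e_r,\omega\rangle}$. Moreover, the vectors $e_1,\dots,e_r\in\mathbb R^c$ are $\mathbb Q$-linearly independent, since clearing denominators in any nontrivial rational relation would produce a nontrivial integer relation contradicting the freeness of the basis.

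The second step is to invoke the multidimensional Kronecker approximation theorem: for $\mathbb Q$-linearly independent vectors $e_1,\dots,e_r\in\mathbb R^c$, the image of the map $\omega\mapsto\bigl(e^{i\langle e_1,\omega\rangle},\dots,e^{i\langle e_r,\omega\rangle}\bigr)$ is dense in the torus $\mathbb U^r$. Applying this with targets $(\varkappa(e_1),\dots,\varkappa(e_r))$ delivers the desired $\omega$ and completes the approximation.

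The substantive content, and the only real obstacle, is Kronecker's theorem itself. A cleaner alternative I would consider presenting is a direct duality argument: since $\mathbb X_b$ is compact (Proposition~\ref{p:R_b is compact}) and arises as the Pontryagin dual of the discrete group $\mathbb R^c_d$, its continuous characters are exactly the evaluations $\varkappa\mapsto\varkappa(x)$ for $x\in\mathbb R^c$. The closure $\overline{\mathbb X}$ is then a closed subgroup of $\mathbb X_b$, and its annihilator $\{\,x\in\mathbb R^c:e^{i\langle x,\omega\rangle}=1\text{ for all }\omega\in\mathbb R^c\,\}$ is $\{0\}$; the annihilator--subgroup correspondence of Pontryagin duality then forces $\overline{\mathbb X}=\mathbb X_b$. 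This packages the Diophantine content into standard duality machinery.
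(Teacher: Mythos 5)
The paper offers no proof of this proposition at all --- it is imported verbatim from Hewitt--Ross, 26.15 --- so there is no internal argument to compare against: you have supplied a genuine proof where the paper has only a citation. Both of your routes are correct. In the first, the reduction is sound: the subgroup generated by $x_1,\dots,x_n$ is free abelian on some $e_1,\dots,e_r$, these are $\mathbb Q$-independent as you say, and the homomorphism property of $\varkappa$ and of $\chi_\omega$ propagates the approximation from the $e_j$ to the $x_k$ with a loss controlled by the integer coefficients; moreover, for the continuous parameter $\omega\in\mathbb R^c$ the density of the image of $\omega\mapsto\bigl(e^{i\langle e_1,\omega\rangle},\dots,e^{i\langle e_r,\omega\rangle}\bigr)$ in $\mathbb U^r$ does require only $\mathbb Q$-independence of the $e_j$ (no auxiliary condition involving $1$, unlike the discrete-time version), as one sees by checking that no nontrivial character $(z_1,\dots,z_r)\mapsto\prod_j z_j^{m_j}$ of the torus kills the image. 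The caveat is that this route is close to circular in spirit: the proposition is labelled after Kronecker precisely because it \emph{is} the Kronecker--Weyl theorem repackaged, so invoking that theorem as a black box relocates rather than removes the substantive content. Your second, duality-based argument is the one to prefer: $\overline{\mathbb X}$ is a closed subgroup of the compact group $\mathbb X_b$, the dual of $\mathbb X_b$ is $\mathbb R^c_d$, the annihilator of $\overline{\mathbb X}$ is trivial because $e^{i\langle x,\omega\rangle}=1$ for all $\omega\in\mathbb R^c$ forces $x=0$, and the double-annihilator theorem gives $\overline{\mathbb X}=\mathbb X_b$. That is the standard proof of density of a group in its Bohr compactification, is self-contained modulo Pontryagin duality, and matches the machinery the paper already draws from Hewitt--Ross elsewhere.
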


In accordance with Proposition~\ref{p:X(Rc)}, we identify the initial group $\mathbb R^c$ with $\mathbb X(\mathbb R^c)$; and according to Proposition~\ref{p:Kroneker} we extend the group $\mathbb R^c\simeq\mathbb X(\mathbb R^c)$
to the group $\mathbb X_b(\mathbb R^c)$; in the latter case we denote the extension $\mathbb X_b(\mathbb R^c)$ by $\mathbb R^c_b$. We denote by $C(\mathbb R^c_b,X)$ the Banach space of all continuous functions $u:\,\mathbb R^c_b\to X$ with the norm $\lVert u\rVert=\max_{t\in\mathbb R^c_b}\lVert u(t)\rVert$.

\begin{theorem}[{\rm\cite[16.2.1]{Dixmier:fr},~\cite[p. 7]{Pankov90:eng}}]\label{t:5.1.9}
Let $X$ be a Banach space. For a function $u\in C(\mathbb R^c,X)$, the following assumptions are equivalent.
\begin{enumerate}
 \item[{\rm(a)}] $u\in C_{AP}(\mathbb R^c,X)$.
 \item[{\rm(b)}] The function $u$ can be approximated in the norm of\/ $C(\mathbb R^c,X)$ by functions of the form
\begin{equation*}
p(t)=\sum_{k=1}^m\langle x,\chi_k\rangle u_k,
\end{equation*}
where $\chi_k\in\mathbb X(\mathbb R^c)$ and $u_k\in X$.
 \item[{\rm(c)}] The function $u$ possesses an extension to a function $u\in C(\mathbb R^c_b,X)$. By Proposition~\ref{p:Kroneker}, this extension is unique.
\end{enumerate}
\end{theorem}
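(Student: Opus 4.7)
The plan is to prove the equivalences via the cycle (a)$\Rightarrow$(b)$\Rightarrow$(c)$\Rightarrow$(a), with the only hard step being (a)$\Rightarrow$(b), the Bochner--Fejér approximation theorem; the other two implications are formal consequences of the Bohr-compactification framework already set up.

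For (b)$\Rightarrow$(c), I would observe that each trigonometric monomial $x\mapsto\langle x,\chi_\omega\rangle u_k$ extends from $\mathbb R^c$ to $\mathbb R^c_b=\mathbb X_b(\mathbb R^c)$ via the evaluation $\varkappa\mapsto\varkappa(\omega)u_k$, which is continuous by the very definition of the pointwise-convergence topology on $\mathbb X_b$. Finite sums extend likewise; and since $\mathbb R^c$ is dense in $\mathbb R^c_b$ (Proposition~\ref{p:Kroneker}), any uniformly Cauchy sequence of such polynomials on $\mathbb R^c$ has extensions that are uniformly Cauchy on $\mathbb R^c_b$, and the uniform limit furnishes the desired $\tilde u\in C(\mathbb R^c_b,X)$.

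For (c)$\Rightarrow$(a), given $\tilde u\in C(\mathbb R^c_b,X)$ extending $u$, I would use that $\tilde u$ is uniformly continuous on the compact group $\mathbb R^c_b$, so the translation map $\Phi\colon \varkappa\mapsto\bigl(\varkappa'\mapsto\tilde u(\varkappa'-\varkappa)\bigr)$ is continuous from $\mathbb R^c_b$ into $C(\mathbb R^c_b,X)$ with compact image. Composing with the contractive restriction map $\rho\colon C(\mathbb R^c_b,X)\to C(\mathbb R^c,X)$ gives a continuous map with compact image whose restriction to $\mathbb R^c\hookrightarrow\mathbb R^c_b$ (via Proposition~\ref{p:X(Rc)}) agrees with $h\mapsto S_h u$. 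Hence $\{S_h u\colon h\in\mathbb R^c\}$ is relatively compact in $C(\mathbb R^c,X)$, so $u\in C_{AP}$.

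The main obstacle is (a)$\Rightarrow$(b), the Bochner--Fejér approximation theorem for vector-valued almost periodic functions. My plan is fourfold: (i) establish the invariant mean $M(u)=\lim_{T\to\infty}(2T)^{-c}\int_{[-T,T]^c}u(x)\,dx\in X$ for $u\in C_{AP}(\mathbb R^c,X)$, its existence following from total boundedness of the orbit; (ii) define the generalized Fourier coefficients $a_u(\omega):=M(\overline{\chi_\omega}\cdot u)$ and show at most countably many are nonzero via a Bessel-type inequality; (iii) for each $N$, construct the Bochner--Fejér kernel as a positive trigonometric polynomial concentrated near $0$ whose frequencies are drawn from a finite subgroup of $\mathbb R^c$ containing the $N$ most significant frequencies of $u$; (iv) verify that the resulting convolutions $\sigma_N u$ are trigonometric polynomials of the form in (b) converging uniformly to $u$, via a standard $\varepsilon/3$ argument using positivity of the kernel and uniform continuity of $u$. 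Step (iii), the combinatorial design of the kernel, is the most delicate point; complete details appear in \cite[16.2.1]{Dixmier:fr} and \cite[p.~7]{Pankov90:eng}.
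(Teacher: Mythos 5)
The paper does not prove this theorem at all: it is imported as a classical result with references to \cite{Dixmier:fr} and \cite{Pankov90:eng}, so there is no internal proof to compare yours against. Judged on its own terms, your cycle (a)$\Rightarrow$(b)$\Rightarrow$(c)$\Rightarrow$(a) is the standard one, and the two ``soft'' implications are argued correctly and completely: for (b)$\Rightarrow$(c) the extension of a monomial $x\mapsto\langle x,\chi_\omega\rangle u_k$ to $\varkappa\mapsto\langle\omega,\varkappa\rangle u_k$ is continuous in the pointwise topology on $\mathbb X_b$, and density of $\mathbb R^c$ in $\mathbb R^c_b$ makes the extension map isometric on trigonometric polynomials, so uniform Cauchyness passes to the extensions; for (c)$\Rightarrow$(a) uniform continuity on the compact group $\mathbb R^c_b$ does give continuity of the translation orbit, hence compactness of its image, and composing with the restriction to $\mathbb R^c$ recovers $h\mapsto S_hu$.

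Two caveats on the hard implication (a)$\Rightarrow$(b). First, your step (ii) invokes ``a Bessel-type inequality'' to get countability of the spectrum, but $X$ is an arbitrary Banach space here, so there is no inner product and no Bessel inequality for $a_u(\omega)=M(\overline{\chi_\omega}\,u)$; the vector-valued case needs a different argument (e.g., reduce to the scalar case via functionals $\phi\in X^*$ together with separability of the relatively compact range of $u$, or run the Bochner--Fej\'er construction in a way that produces the countable frequency module directly). Second, step (iii) --- the construction of the Bochner--Fej\'er kernels from a rational basis of the frequency module, which is the actual content of the approximation theorem --- is deferred entirely to the literature, so your write-up is a correct roadmap rather than a proof. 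Since the paper itself treats the whole theorem as citable background, this level of detail is arguably appropriate, but the Bessel remark should be fixed, as written it is false in a general Banach space.
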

In order not to complicate our notation, we use the same symbol for an initial function defined on $\mathbb R^c$ and for its extension to $\mathbb R^c_b$. We will always make it clear which function we mean by mentioning its domain.

\begin{corollary}\label{c:5.1.9}
Let $T\in\mathoo B_{AP}\bigl(L_p(\mathbb R^c)\bigr)$. Then family~\eqref{e:T{h}} possesses a unique extension to a function $T\{\cdot\}\in C\bigl(\mathbb R_b^c,\mathoo B\bigl(L_p(\mathbb R^c)\bigr)\bigr)$.
\end{corollary}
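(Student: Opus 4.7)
The plan is to verify that the family $T\{\cdot\}$ is almost periodic as a function on $\mathbb R^c$ with values in the Banach space $\mathoo B(L_p(\mathbb R^c))$, and then to apply the equivalence (a)$\Leftrightarrow$(c) of Theorem~\ref{t:5.1.9} with $X=\mathoo B(L_p(\mathbb R^c))$ to obtain the desired continuous extension; uniqueness of the extension is then immediate from the density statement of Proposition~\ref{p:Kroneker}.

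By the definition of $\mathoo B_{AP}(L_p)$, the function $h\mapsto T\{h\}$ is norm-continuous and its range is relatively compact in $\mathoo B(L_p)$, so $T\{\cdot\}\in C(\mathbb R^c,\mathoo B(L_p))$. The nontrivial step is to show that the orbit $\{\,S_a T\{\cdot\}:a\in\mathbb R^c\,\}$ is relatively compact in $C(\mathbb R^c,\mathoo B(L_p))$, which is the definition of almost periodicity from Section~\ref{s:AP operators}. For this I would use the elementary algebraic identity
\begin{equation*}
T\{h-a\}=S_{h-a}TS_{a-h}=S_h T\{-a\}S_{-h},
\end{equation*}
which gives $(S_a T\{\cdot\})(h)=S_h T\{-a\}S_{-h}$.

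Given any sequence $\{a_n\}\subset\mathbb R^c$, the relative compactness of $\{\,T\{b\}:b\in\mathbb R^c\,\}$ in $\mathoo B(L_p)$ allows us to extract a subsequence along which $T\{-a_{n_k}\}\to A$ in operator norm for some $A\in\mathoo B(L_p)$. Since $\lVert S_h\rVert=1$ in $L_p$, we have, uniformly in $h$,
\begin{equation*}
\bigl\lVert(S_{a_{n_k}}T\{\cdot\})(h)-S_h AS_{-h}\bigr\rVert\le\lVert T\{-a_{n_k}\}-A\rVert\to0,
\end{equation*}
proving uniform convergence in $C(\mathbb R^c,\mathoo B(L_p))$ of this subsequence of translates. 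Hence $T\{\cdot\}$ is almost periodic, and Theorem~\ref{t:5.1.9} produces the required unique continuous extension to $C(\mathbb R_b^c,\mathoo B(L_p))$.

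There is no serious obstacle here: the whole argument rests on the identity $T\{h-a\}=S_h T\{-a\}S_{-h}$, which, through the isometry of the shifts $S_h$ on $L_p$, converts norm compactness of the range of $T\{\cdot\}$ into uniform compactness of its translates; everything else is bookkeeping and an appeal to Theorem~\ref{t:5.1.9}.
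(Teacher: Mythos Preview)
Your proof is correct and follows essentially the same route as the paper, which simply says ``The proof follows from Theorem~\ref{t:5.1.9}.'' You have supplied the verification that the paper leaves implicit---namely that the function $h\mapsto T\{h\}$ is almost periodic in the Bohr--Bochner sense (relative compactness of its translates in $C(\mathbb R^c,\mathoo B(L_p))$), not merely that its range is relatively compact---via the identity $T\{h-a\}=S_hT\{-a\}S_{-h}$ and the isometry of $S_h$; this is exactly the detail needed to invoke Theorem~\ref{t:5.1.9}.
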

\begin{proof}
The proof follows from Theorem~\ref{t:5.1.9}.
\end{proof}

A \emph{positive measure} on $\mathbb X_b=\mathbb R^c_b$ is a bounded linear functional $\mu$ on $C(\mathbb X_b,\mathbb C)$ that is non-negative on non-negative functions $u\in C(\mathbb X_b,\mathbb C)$. A \emph{Haar measure} on the group $\mathbb X_b=\mathbb R^c_b$ is a non-zero shift invariant positive measure $\mu$, i.~e., $\mu(u)=\mu(S_\varkappa u)$ for all $u\in C(\mathbb X_b,\mathbb C)$ and $\varkappa\in\mathbb X_b$; here $\bigl(S_\varkappa u\bigr)(x)=u(x-\varkappa)$, $x\in\mathbb X_b$.

\begin{proposition}[{\rm\cite[Theorem 15.5]{Hewitt-Ross-1:eng}}]\label{p:Haar}
The Haar measure exists and is determined uniquely, up to a constant factor.
\end{proposition}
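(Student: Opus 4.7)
Since $\mathbb R^c_b=\mathbb X_b$ is a compact abelian group by Proposition~\ref{p:R_b is compact}, I would prove existence by a fixed-point argument applied to the compact convex set of probability functionals on $C(\mathbb X_b,\mathbb C)$, and uniqueness by an elementary Fubini calculation.

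\textbf{Existence.} Let $\mathcal P\subset C(\mathbb X_b,\mathbb C)^*$ denote the set of positive linear functionals $\mu$ with $\mu(\mathbf 1)=1$, equipped with the weak-$*$ topology. Then $\mathcal P$ is non-empty (any point evaluation $\delta_\varkappa$ belongs to it), convex, and weak-$*$ compact, being a weak-$*$-closed subset of the closed unit ball of $C(\mathbb X_b,\mathbb C)^*$, to which Banach--Alaoglu applies. For each $\varkappa\in\mathbb X_b$ the shift $S_\varkappa$ is an isometry of $C(\mathbb X_b,\mathbb C)$, so its adjoint $S_\varkappa^*\colon\mu\mapsto\mu\circ S_\varkappa$ is an affine, weak-$*$ continuous map of $\mathcal P$ into itself. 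Since $\mathbb X_b$ is abelian, the family $\{S_\varkappa^*\}_{\varkappa\in\mathbb X_b}$ is commuting. The Markov--Kakutani fixed-point theorem then yields a common fixed point $\mu_0\in\mathcal P$, which by construction is a non-zero, shift invariant positive functional on $C(\mathbb X_b,\mathbb C)$, i.\,e., a Haar measure.

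\textbf{Uniqueness.} Let $\mu,\nu$ be two Haar measures. For every $u\in C(\mathbb X_b,\mathbb C)$ and every $x\in\mathbb X_b$, invariance of $\nu$ gives $\nu(u)=\int_{\mathbb X_b}u(y+x)\,d\nu(y)$. Integrating both sides in $x$ against $d\mu$ and applying Fubini (valid for regular Borel measures on the compact Hausdorff space $\mathbb X_b\times\mathbb X_b$),
\begin{equation*}
\nu(u)\,\mu(\mathbf 1)=\int_{\mathbb X_b}\int_{\mathbb X_b}u(y+x)\,d\mu(x)\,d\nu(y).
\end{equation*}
By invariance of $\mu$, the substitution $x\mapsto x-y$ shows that the inner integral equals $\mu(u)$ for each $y$, whence $\nu(u)\,\mu(\mathbf 1)=\mu(u)\,\nu(\mathbf 1)$, and therefore $\nu=\bigl(\nu(\mathbf 1)/\mu(\mathbf 1)\bigr)\mu$ as functionals on $C(\mathbb X_b,\mathbb C)$.

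\textbf{Main obstacle.} The genuine step is existence. One has to set the problem up so that a fixed-point theorem applies without any joint continuity hypothesis on $\varkappa\mapsto S_\varkappa^*\mu$, since joint continuity of the translation action fails on the non-metrizable compact group $\mathbb X_b=\mathbb R^c_b$. Markov--Kakutani is precisely suited to this situation: it requires only that each individual $S_\varkappa^*$ be affine and continuous, not that the whole action be jointly continuous. Once this is in place, both the verification that the fixed point is really a Haar measure and the uniqueness step are routine.
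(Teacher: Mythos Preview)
Your argument is correct and entirely standard: Markov--Kakutani for existence on a compact abelian group, and the Weil-style Fubini trick for uniqueness. There is nothing to compare against in the paper, however: the paper does not supply a proof of Proposition~\ref{p:Haar} at all, but simply quotes \cite[Theorem~15.5]{Hewitt-Ross-1:eng} as a black box. So your write-up furnishes strictly more than the paper does here.

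One small inaccuracy in your ``Main obstacle'' paragraph: on a compact (Hausdorff) topological group such as $\mathbb X_b$, every continuous function is uniformly continuous, and consequently the translation action $(\varkappa,u)\mapsto S_\varkappa u$ on $C(\mathbb X_b,\mathbb C)$ \emph{is} jointly continuous; non-metrizability is no obstruction. So the difficulty you flag does not actually arise. This does not affect your proof, since Markov--Kakutani needs only separate continuity of each $S_\varkappa^*$, which you have. You might also remark explicitly that $\mu(\mathbf1)>0$ for any non-zero positive functional on $C(\mathbb X_b,\mathbb C)$, so the constant $\nu(\mathbf1)/\mu(\mathbf1)$ in the uniqueness step is well defined.
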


We normalize the Haar measure on $\mathbb X_b=\mathbb R^c_b$ so that $\int_{\mathbb X_b}\,d\varkappa=1$.

\begin{lemma}[{\rm see, e.g.,~\cite[Lemma 5.2.3]{Kurbatov99}}]\label{l:5.2.3}
For $h\in\mathbb R^c_d$ one has
\begin{equation*}
\int_{\mathbb X_b}\langle h,\varkappa\rangle\,d\varkappa=
\begin{cases}
 1&\text{if $h=0$},\\
0&\text{if $h\neq0$}.
\end{cases}
\end{equation*}
\end{lemma}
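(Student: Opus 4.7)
The plan is the standard two-case argument based on the compact-group fact that a nontrivial continuous character integrates to zero against the normalized Haar measure. The case $h=0$ is immediate: $\langle 0,\varkappa\rangle=\varkappa(0)=1$ for every $\varkappa\in\mathbb X_b$, so the integrand is identically one and the integral equals the total mass, which is $1$ by our normalization $\int_{\mathbb X_b}d\varkappa=1$.

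For $h\neq 0$, I would first observe that the evaluation map $\varphi_h\colon\mathbb X_b\to\mathbb U$ defined by $\varphi_h(\varkappa)=\langle h,\varkappa\rangle$ is a continuous character of the compact abelian group $\mathbb X_b=\mathbb R^c_b$. Continuity is immediate from the pointwise-convergence topology on $\mathbb X_b$ (it reduces to continuity at the single point $h$), and the homomorphism property $\varphi_h(\varkappa_1+\varkappa_2)=\varphi_h(\varkappa_1)\varphi_h(\varkappa_2)$ is built into the pointwise definition of the group operation on $\mathbb X_b$. Next I would exhibit some $\varkappa_0\in\mathbb X_b$ with $\varphi_h(\varkappa_0)\neq 1$: since $\mathbb X\subset\mathbb X_b$ contains every continuous character $\chi_\omega(x)=e^{i\langle x,\omega\rangle}$ by Proposition~\ref{p:X(Rc)}, and since $h\neq 0$, I can pick $\omega\in\mathbb R^c$ with $\langle h,\omega\rangle\notin 2\pi\mathbb Z$ and take $\varkappa_0=\chi_\omega$.

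Finally I would invoke the shift invariance of the Haar measure. Writing $I=\int_{\mathbb X_b}\langle h,\varkappa\rangle\,d\varkappa$, invariance under the shift by $-\varkappa_0$ combined with the character property of $\varphi_h$ yields
\[
I=\int_{\mathbb X_b}\langle h,\varkappa+\varkappa_0\rangle\,d\varkappa=\langle h,\varkappa_0\rangle\,I,
\]
so $(1-\langle h,\varkappa_0\rangle)\,I=0$, and because $\langle h,\varkappa_0\rangle\neq 1$ we conclude $I=0$. The only step demanding any real thought is the production of $\varkappa_0$; the rest is a mechanical application of Haar invariance. I do not anticipate a genuine obstacle, since the required nontriviality follows at once from the inclusion $\mathbb X\subset\mathbb X_b$, with no need to invoke Pontryagin duality for the discrete group $\mathbb R^c_d$.
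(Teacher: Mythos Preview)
Your argument is correct and is exactly the classical orthogonality-of-characters proof: the $h=0$ case uses the normalization, and for $h\neq0$ you exhibit a point $\varkappa_0\in\mathbb X_b$ where the evaluation character $\varkappa\mapsto\langle h,\varkappa\rangle$ is nontrivial and then use translation invariance of Haar measure. Note that the paper does not supply its own proof of this lemma; it simply cites \cite[Lemma~5.2.3]{Kurbatov99}, where the same standard argument (or a direct appeal to the orthogonality relations for characters on a compact abelian group, e.g.\ \cite[23.19]{Hewitt-Ross-1:eng}) is given, so there is nothing substantively different to compare.
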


We consider the Banach algebra
$l_1=l_1\bigl(\mathbb R^c_d,\mathoo B(L_p)\bigr)$. The algebra $l_1$ consists of all families $\mathcal T=\{\,T_\omega\in\mathoo B(L_p):\,\omega\in\mathbb R^c\,\}$ such that the norm $\Vert\mathcal T \Vert=\sum_{\omega\in\mathbb R^c}\Vert T_\omega\Vert$ is finite (it is assumed that at most a countable number of operators $T_\omega$ are nonzero), with the point-wise linear operations. The multiplication in $l_1$ is defined as the convolution: the family $\mathcal R=\mathcal T*\mathcal S$, where $\mathcal T=\{\,T_\omega\in\mathoo B(L_p):\,\omega\in\mathbb R^c_d\,\}$ and $\mathcal S=\{\,S_\omega\in\mathoo B(L_p):\,\omega\in\mathbb R^c_d\,\}$, consists of the elements
\begin{equation*}
R_\omega=\sum_{\nu\in\mathbb R^c}T_{\omega-\nu}S_{\nu},\qquad\omega\in\mathbb R^c_d.
\end{equation*}
We will use the following variant of the Bochner--Phillips theorem~\cite{Bochner-Phillips}.

\begin{lemma}[{\rm\cite[Corollary 4.5.2(g)]{Kurbatov99}}]\label{l:Bochner--Phillips}
An element $\{\,T_h\in\mathoo B(L_p):\,h\in\mathbb R^c\,\}\in l_1$ is invertible in the algebra $l_1$ if and only if the operators $T\{\varkappa\}=\sum_{\omega\in\mathbb R^c}\langle\omega,\varkappa\rangle T_\omega$ are invertible for all $\varkappa\in\mathbb R^c_b$.
\end{lemma}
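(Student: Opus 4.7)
The plan is to follow the classical Bochner--Phillips scheme, viewing the symbol map
\[
\Phi\colon l_1 \to C(\mathbb R^c_b, \mathoo B(L_p)), \qquad \Phi(\mathcal T)(\varkappa) = T\{\varkappa\} = \sum_{\omega\in\mathbb R^c}\langle\omega,\varkappa\rangle T_\omega,
\]
as the non-commutative analogue of the Gelfand transform on the Wiener algebra $l_1(\mathbb R^c_d)$, with spectrum $\mathbb R^c_b$.

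Necessity is essentially a direct computation. The series defining $T\{\varkappa\}$ converges uniformly in $\varkappa\in\mathbb R^c_b$ (because $|\langle\omega,\varkappa\rangle|=1$ and $\sum_\omega\lVert T_\omega\rVert<\infty$), so $T\{\cdot\}\in C(\mathbb R^c_b,\mathoo B(L_p))$, and reindexing the convolution sum shows that $\Phi$ is a unital algebra homomorphism. Hence, if $\mathcal T^{-1}$ exists in $l_1$, then $T\{\varkappa\}^{-1}=\Phi(\mathcal T^{-1})(\varkappa)$ exists for every $\varkappa\in\mathbb R^c_b$.

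For the converse — the Wiener-type direction — I would proceed in four steps. First, by compactness of $\mathbb R^c_b$ (Proposition~\ref{p:R_b is compact}) together with the continuity of $T\{\cdot\}$ and the openness of the set of invertible operators in $\mathoo B(L_p)$, fix $\varepsilon>0$ and cover $\mathbb R^c_b$ by finitely many open neighborhoods $U_1,\dots,U_N$ of points $\varkappa_1,\dots,\varkappa_N$ such that $T\{\varkappa\}^{-1}$ exists on $U_j$ and $\lVert T\{\varkappa\}^{-1}-T\{\varkappa_j\}^{-1}\rVert<\varepsilon$ for $\varkappa\in U_j$. Second, construct a subordinate partition of unity $\phi_1,\dots,\phi_N$ on $\mathbb R^c_b$ whose members are trigonometric polynomials (hence lie in the image of scalar $l_1(\mathbb R^c_d)$ under $\Phi$) with uniformly controlled $l_1$-norms. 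Third, define the parametrix candidate $\mathcal S\in l_1$ whose image under $\Phi$ is $\sum_j \phi_j(\varkappa)\,T\{\varkappa_j\}^{-1}$; this sequence lies in $l_1$ because each $\phi_j$ has finite Fourier support and the $T\{\varkappa_j\}^{-1}$ are constants in $\mathoo B(L_p)$. Fourth, estimate $\lVert \mathcal T*\mathcal S-\mathbf 1\rVert_{l_1}$ by expanding the convolution, using the local uniform bounds from the first step and the $l_1$-bound $\sum_j\lVert\phi_j\rVert_{l_1}\le C$ from the second; for $\varepsilon$ sufficiently small this norm drops below $1$, a Neumann series then inverts $\mathcal T*\mathcal S$ in $l_1$, and $\mathcal T$ acquires a right inverse in $l_1$. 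A symmetric construction provides a left inverse, giving an honest two-sided inverse.

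The principal obstacle lies in the second step: producing trigonometric partitions of unity on $\mathbb R^c_b$ with controlled $l_1$-norms. Pointwise approximation is cheap — by Theorem~\ref{t:5.1.9} every continuous function on $\mathbb R^c_b$ is a uniform limit of trigonometric polynomials — but passing from uniform smallness of the symbol on $\mathbb R^c_b$ to smallness of the underlying sequence in the $l_1$-norm is the classical non-trivial content behind Wiener's lemma. It is the scalar Wiener regularity of $l_1(\mathbb R^c_d)$ (furnished, for instance, by Fej\'er-type positive kernels on the Bohr compactification) that guarantees such $l_1$-controlled partitions of unity exist; only once they are in hand can the fourth step be executed. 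Everything else reduces to bookkeeping within the Banach algebra $l_1(\mathbb R^c_d, \mathoo B(L_p))$.
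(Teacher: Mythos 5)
First, note that the paper does not prove this lemma at all: it is quoted verbatim from~\cite[Corollary 4.5.2(g)]{Kurbatov99} as a known variant of the Bochner--Phillips theorem, so there is no in-paper argument to compare yours against. Judged on its own merits, your necessity direction is correct and complete (the symbol map $\Phi$ is a unital homomorphism by absolute convergence and reindexing), and you correctly identify the full Bohr dual $\mathbb R^c_b$, rather than only the continuous characters, as the relevant spectrum. The overall architecture of your converse --- compactness of $\mathbb R^c_b$, local inverses $T\{\varkappa_j\}^{-1}$, a subordinate partition of unity in the image of the scalar algebra, a Neumann-series correction --- is indeed the classical Bochner--Phillips scheme.

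The genuine gap is in your fourth step, not (as you claim) only in the second. Even granting an $l_1$-controlled partition of unity $\phi_j=\Phi(\mathcal P_j)$, expanding $\mathcal T*\mathcal S-\mathbf 1=\sum_j\mathcal P_j*(\mathcal T*\mathcal R_j-\mathbf 1)$ and estimating each summand by submultiplicativity gives only $\lVert\mathcal P_j\rVert_{l_1}\,\lVert\mathcal T*\mathcal R_j-\mathbf 1\rVert_{l_1}$, and the second factor is \emph{not} small: what is small is the \emph{sup-norm} of the symbol $\phi_j(\varkappa)\bigl(T\{\varkappa\}-T\{\varkappa_j\}\bigr)T\{\varkappa_j\}^{-1}$ on $\mathbb R^c_b$, and passing from sup-norm smallness of a symbol to $l_1$-smallness of its coefficient sequence is precisely the Wiener-lemma content you are trying to establish --- it cannot be obtained from ``local uniform bounds'' plus ``$\sum_j\lVert\phi_j\rVert_{l_1}\le C$'' by bookkeeping. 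The standard repair is to split $\mathcal T=\mathcal T_0+\mathcal T_1$ with $\mathcal T_0$ finitely supported and $\lVert\mathcal T_1\rVert_{l_1}$ small, factor the trigonometric polynomial $\widehat{\mathcal T_0}(\varkappa)-\widehat{\mathcal T_0}(\varkappa_j)$ through the finitely many characters $\langle\omega,\cdot\rangle$ occurring in it (reducing to a copy of $\mathbb T^m$, since the subgroup of $\mathbb R^c_d$ generated by the support is free abelian of finite rank), and then invoke the Fej\'er-kernel estimate $\lVert F_N(\cdot-\varkappa_j)\bigl(\langle\omega,\cdot\rangle-\langle\omega,\varkappa_j\rangle\bigr)\rVert_{l_1}\to 0$ as $N\to\infty$. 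In other words, the Fej\'er/regularity input you relegate to the construction of the $\phi_j$ is actually needed inside the product estimate of step four, applied to the truncated element; without making that explicit, the Neumann-series step does not close and the proof as written fails.
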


\begin{theorem}\label{t:B_{APW} is full}
The subalgebra $\mathoo B_{APW}(L_p)$, $1\le p\le\infty$, is full in the algebra $\mathoo B(L_p)$. More precisely, if an operator $K\in\mathoo B_{APW}(L_p)$ is invertible, then $K^{-1}=\sum_{\omega\in\mathbb R^c}\Psi_{-\omega} B_\omega$, where $B_\omega\in\mathcal A(L_p)$, with $\sum_{\omega\in\mathbb R^c}\lVert B_\omega\rVert<\infty$, and
\begin{equation*}
(K\{\varkappa\})^{-1}=\sum_{\omega\in\mathbb R^c}\langle-\omega,\varkappa\rangle \Psi_{-\omega} B_\omega,\qquad\varkappa\in\mathbb X_b(\mathbb R^c).
\end{equation*}
\end{theorem}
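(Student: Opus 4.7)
The strategy is to re-index the absolutely convergent representation \eqref{e:N=sum Psi A'} as an element of the convolution algebra $l_1(\mathbb R^c_d,\mathoo B(L_p))$, invert it using the Bochner--Phillips theorem (Lemma \ref{l:Bochner--Phillips}), and then exploit the compact Bohr group $\mathbb R^c_b$ to recover the structure of the Fourier coefficients of $K^{-1}$.

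Concretely, I would set $T_\omega:=\Psi_{-\omega}A_{-\omega}$, so that $\mathcal T=\{T_\omega\}\in l_1(\mathbb R^c_d,\mathoo B(L_p))$; after re-indexing $\sum_\omega T_\omega$ equals $K$, and a comparison with Corollary \ref{c:N(chi)=N(h)} gives $T\{\varkappa\}=K\{\varkappa\}$ for every $\varkappa\in\mathbb R^c_b$. To apply Lemma \ref{l:Bochner--Phillips} I must check that $K\{\varkappa\}$ is invertible in $\mathoo B(L_p)$ for every such $\varkappa$. For continuous characters $\varkappa=\chi_h$ this is immediate: $K\{h\}=S_hKS_{-h}$ is invertible with $\|K\{h\}^{-1}\|=\|K^{-1}\|$. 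Because the absolute convergence makes $K$ itself almost periodic (with $h\mapsto K\{h\}$ norm-continuous and of relatively compact range), Corollary \ref{c:5.1.9} extends $K\{\cdot\}$ continuously to all of $\mathbb R^c_b$; invertibility on the dense subgroup $\mathbb R^c\simeq\mathbb X(\mathbb R^c)$ (Proposition \ref{p:Kroneker}), combined with the uniform norm bound on the inverses and the openness of invertibility in $\mathoo B(L_p)$, propagates invertibility to the whole Bohr group. Lemma \ref{l:Bochner--Phillips} then produces $\mathcal S=\{S_\omega\}\in l_1$ with $\mathcal T\ast\mathcal S=\delta_0$, whence $K^{-1}=\sum_\omega S_\omega$.

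The main obstacle will be to show that each $S_\omega$ automatically has the structured form $\Psi_{-\omega}B_\omega$ with $B_\omega\in\mathoo A(L_p)$---that is, the generic $l_1$-inverse respects the richer algebra $\mathoo B_{APW}(L_p)$. The key is to combine the Fourier-inversion formula
\begin{equation*}
S_\omega=\int_{\mathbb R^c_b}\langle-\omega,\varkappa\rangle(K\{\varkappa\})^{-1}\,d\varkappa,
\end{equation*}
which follows from Lemma \ref{l:5.2.3}, with the covariance identity $S_hK\{\varkappa\}S_{-h}=K\{\varkappa+\chi_h\}$; the latter falls out of Proposition \ref{p:SPsi=PsiS} and the shift invariance of each $A_\omega$. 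Changing variable in the integral and using the shift invariance of the Haar measure (Proposition \ref{p:Haar}) yields the transformation law $S_hS_\omega S_{-h}=\langle\omega,\chi_h\rangle S_\omega$, which by Proposition \ref{p:SPsi=PsiS} is exactly the statement that $B_\omega:=\Psi_\omega S_\omega$ commutes with every shift, i.e., $B_\omega\in\mathoo A(L_p)$. The explicit series for $(K\{\varkappa\})^{-1}$ is then read off from the $l_1$-Fourier expansion of $\mathcal S$.
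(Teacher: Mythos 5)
Your proposal is correct and follows essentially the same route as the paper's proof: extend $K\{\cdot\}$ to the Bohr compactification $\mathbb X_b(\mathbb R^c)$, propagate invertibility from the dense subgroup $\mathbb X(\mathbb R^c)$ using the uniform bound $\lVert(K\{\chi\})^{-1}\rVert=\lVert K^{-1}\rVert$, invert in $l_1\bigl(\mathbb R^c_d,\mathoo B(L_p)\bigr)$ via the Bochner--Phillips lemma (Lemma~\ref{l:Bochner--Phillips}), and identify each Fourier coefficient of the inverse as a modulation composed with a shift-invariant operator by combining the covariance identity $K\{\varkappa+\chi_h\}=S_hK\{\varkappa\}S_h^{-1}$ with Haar-measure integration and Lemma~\ref{l:5.2.3}. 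The only deviations are cosmetic: your re-indexing $T_\omega=\Psi_{-\omega}A_{-\omega}$ and your integrating $(K\{\varkappa\})^{-1}$ directly rather than term by term in its series.
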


Theorem~\ref{t:B_{APW} is full} can be obtained as a special case of~\cite[Theorem 3.2]{Balan-Krishtal10}, see also~\cite{Baskakov90:eng} and~\cite[Theorem 5.2.5]{Kurbatov99}. Nevertheless, we give here a detailed proof for the completeness of the exposition.

\begin{proof}
Let an invertible operator $K\in\mathoo B_{APW}(L_p)$ be represented in the form~\eqref{e:N=sum Psi A'}.

For all $\varkappa\in\mathbb X_b(\mathbb R^c)$, we consider the operator
\begin{equation}\label{e:6'}
K\{\varkappa\}=\sum_{\omega\in\mathbb R^c}\langle-\omega,\varkappa\rangle\Psi_{\omega}A_{\omega}.
\end{equation}
It is clear that function~\eqref{e:6'} is an extension from $\mathbb X(\mathbb R^c)$ to $\mathbb X_b(\mathbb R^c)$ of the function $K\{\cdot\}$ defined by~\eqref{e:7''}. Moreover, since the series in~\eqref{e:6'} is absolutely convergent, function~\eqref{e:6'} is continuous with respect to the topology of $\mathbb X_b(\mathbb R^c)$. By Proposition~\ref{p:Kroneker}, it coincides with the extension of~\eqref{e:7''} by continuity.

Since the operator $K$ is invertible in the algebra $\mathoo B(L_p)$, from~\eqref{e:S_hKS_-h} and Corollary~\ref{c:N(chi)=N(h)} it is evident that the operators $K\{\chi\}$, $\chi\in\mathbb X(\mathbb R^c)$, are also invertible and $\Vert (K\{\chi\})^{-1}\Vert=\Vert K^{-1}\Vert$.
Because $\mathbb X(\mathbb R^c)$ is dense in $\mathbb X_b(\mathbb R^c)$ (see Proposition~\ref{p:Kroneker}), the operators $K\{\varkappa\}$ are invertible for all $\varkappa\in\mathbb X_b(\mathbb R^c)$ as well, with $\Vert (K\{\varkappa\})^{-1}\Vert=\Vert K^{-1}\Vert$.

We briefly denote the operators $\Psi_{\omega}A_{\omega}$ from~\eqref{e:N=sum Psi A'} by $T_\omega$ and consider the family
$\{\,T_{\omega}\in\mathoo B(L_p):\,h\in\mathbb R^c\,\}\in l_1\bigl(\mathbb R^c_d,\mathoo B(L_p)\bigr)$
associated with the operator $K$. As we have seen above, the operators
$K\{\varkappa\}$, $\varkappa\in\mathbb X_b(\mathbb R^c)$, are invertible. Hence, by Lemma~\ref{l:Bochner--Phillips}, the family $\{T_{\omega}\}$ is invertible in the algebra $l_1\bigl(\mathbb R^c_d,\mathoo B(L_p)\bigr)$;
we denote its inverse by $\{R_\omega\}\in l_1\bigl(\mathbb R^c_d,\mathoo B(L_p)\bigr)$. Thus,
\begin{equation*}
\sum_{\nu\in\mathbb R^c}T_{\omega-\nu}R_{\nu}=
\sum_{\nu\in\mathbb R^c}R_{\omega-\nu}T_{\nu}=
\begin{cases}
\mathbf1 & \text{if }\omega=0, \\
0 & \text{otherwise}.
\end{cases}
\end{equation*}

We show that the function $\varkappa\mapsto\sum_{\omega\in\mathbb R^c}\langle-\omega,\varkappa\rangle R_\omega$
is the point-wise inverse of the function $\varkappa\mapsto K\{\varkappa\}$.
In other words,
\begin{equation}\label{e:N{}-1}
(K\{\varkappa\})^{-1}=\sum_{\omega\in\mathbb R^c}\langle-\omega,\varkappa\rangle R_\omega
\end{equation}
for some $R_\omega\in\mathoo B(L_p)$, with $\sum_{\omega\in\mathbb R^c}\Vert R_\omega\Vert<\infty$. Indeed,
\begin{align*}
\Bigl(\sum_{\mu\in\mathbb R^c}\langle-\mu,\varkappa\rangle T_\mu\Bigr)
\Bigl(\sum_{\nu\in\mathbb R^c}\langle-\nu,\varkappa\rangle R_\nu\Bigr)
&=\sum_{\nu\in\mathbb R^c}\sum_{\mu\in\mathbb R^c}\langle-\mu-\nu,\varkappa\rangle T_\mu R_\nu\\
&=\sum_{\nu\in\mathbb R^c}\sum_{\omega\in\mathbb R^c}\langle-\omega,\varkappa\rangle T_{\omega-\nu} R_\nu\\
&=\sum_{\omega\in\mathbb R^c}\langle-\omega,\varkappa\rangle\sum_{\nu\in\mathbb R^c} T_{\omega-\nu} R_\nu\\
&=\sum_{\omega\in\mathbb R^c}\langle-\omega,\varkappa\rangle\begin{cases}
\mathbf1 & \text{if }\omega=0, \\
0 & \text{otherwise},
\end{cases}\\
&=\mathbf1.
\end{align*}
The equality $\Bigl(\sum_{\nu\in\mathbb R^c}\langle-\nu,\varkappa\rangle R_\nu\Bigr)\Bigl(\sum_{\mu\in\mathbb R^c}\langle-\mu,\varkappa\rangle T_\mu\Bigr)=\mathbf1$ is established in a similar way. In particular, substituting in these formulae $\varkappa=0$ we obtain
\begin{equation}\label{e:N-1=series}
K^{-1}=(K\{0\})^{-1}=\sum_{\omega\in\mathbb R^c}R_\omega.
\end{equation}

To complete the proof, we show that $R_\omega$ has the form $R_\omega=\Psi_{-\omega}B_{\omega}$, where $B_{\omega}\in\mathoo A(L_p)$.

It is straightforward to verify that
\begin{equation*}
K\{\varkappa+\chi_h\}=S_h K\{\varkappa\}S_h^{-1},
\qquad h\in\mathbb R^c,\;\varkappa\in\mathbb X_b,
\end{equation*}
From this identity, it follows that for an arbitrary $\omega_0$
\begin{equation*}
\langle\omega_0,\varkappa\rangle K\{\varkappa+\chi_h\}=
\langle\omega_0,\varkappa\rangle S_hK\{\varkappa\}S_h^{-1},
\qquad\varkappa\in\mathbb X_b.
\end{equation*}
We invert both the left and right sides:
\begin{equation*}
\langle-\omega_0,\varkappa\rangle(K\{\varkappa+\chi_h\})^{-1}=
\langle-\omega_0,\varkappa\rangle S_h(K\{\varkappa\})^{-1}S_h^{-1},
\qquad\varkappa\in\mathbb X_b.
\end{equation*}
Substituting representation~\eqref{e:N{}-1} is this formula, we obtain
\begin{equation*}
\sum_{\omega\in\mathbb R^c}\langle-\omega,\varkappa+\chi_h\rangle \langle-\omega_0,\varkappa\rangle R_\omega=
\langle-\omega_0,\varkappa\rangle S_h\sum_{\omega\in\mathbb R^c}\langle-\omega,\varkappa\rangle R_\omega S_h^{-1},
\qquad\omega\in\mathbb R^c,\varkappa\in\mathbb X_b.
\end{equation*}
Or
\begin{equation*}
\sum_{\omega\in\mathbb R^c}
\langle-\omega,\chi_h\rangle \langle-\omega-\omega_0,\varkappa\rangle R_\omega=
S_h\Bigl(\sum_{\omega\in\mathbb R^c}\langle-\omega-\omega_0,\varkappa\rangle R_\omega\Bigr) S_h^{-1},
\qquad\omega\in\mathbb R^c,\varkappa\in\mathbb X_b.
\end{equation*}
Next, we integrate the both sides with respect to $\varkappa\in\mathbb X_b$ (using the Haar measure and taking into account Lemma~\ref{l:5.2.3}):
\begin{multline*}
\int_{\mathbb X_b}\sum_{\omega\in\mathbb R^c}
\langle-\omega,\chi_h\rangle \langle-\omega-\omega_0,\varkappa\rangle R_\omega\,d\varkappa\\
=\int_{\mathbb X_b}S_h\Bigl(\sum_{\omega\in\mathbb R^c}\langle-\omega-\omega_0,\varkappa\rangle R_\omega\Bigr) S_h^{-1}\,d\varkappa,
\qquad\omega\in\mathbb R^c,
\end{multline*}
or
\begin{equation*}
\langle-\omega_0,\chi_h\rangle R_{-\omega_0}=S_hR_{-\omega_0}S_h^{-1},
\end{equation*}
or
\begin{equation*}
R_{-\omega_0}=e^{i\langle\omega_0,h \rangle}S_hR_{-\omega_0}S_h^{-1},
\end{equation*}
or
\begin{equation*}
R_{\omega_0}=e^{-i\langle\omega_0,h \rangle}S_hR_{\omega_0}S_h^{-1}
\end{equation*}
for all $\omega_0\in\mathbb R^c$.
By Proposition~\ref{p:SPsi=PsiS}, the last equality implies that
\begin{equation*}
\Psi_{\omega_0}R_{\omega_0}=S_h\Psi_{\omega_0}R_{\omega_0}S_h^{-1},
\end{equation*}
which means that $B_{\omega_0}=\Psi_{\omega_0}R_{\omega_0}\in\mathoo A(L_p)$.
\end{proof}

\begin{remark}\label{r:Mukhamadiev}
It is useful to notice that a one-sided invertibility of an almost periodic operator often implies its two-sided invertibility~\cite{Favard28,Kurbatov-85:eng,Kurbatov-MSb89:eng,
Mukhamadiev71:eng,Mukhamadiev72a:eng,Shubin78:eng,Slyusarchuk81,Slyusarchuk08}.
\end{remark}

\section{Integral operators in $L_\infty$}\label{s:int op in L infty}
In this and subsequent sections we assume that $\mathbb E$ is a finite-dimensional Hilbert space.

We denote by $\mathoo N_b=\mathoo N_b(\mathbb R^c,\mathbb E)$ the set of all measurable functions $n:\,\mathbb R^c\times\mathbb R^c\to\mathoo B(\mathbb E)$ such that for almost all $x\in\mathbb R^c$ the function $n(x,\cdot)$ (is defined almost everywhere and) belongs to $\mathcal L_1\bigl(\mathbb R^c,\mathoo B(\mathbb E)\bigr)$ and
\begin{equation*}
\lVert n\rVert_{\mathoo N_b}=\esssup_{x\in\mathbb R^c}\lVert n(x,\cdot)\rVert_{L_1}<\infty.
\end{equation*}
We introduce in $\mathoo N_b(\mathbb R^c,\mathbb E)$ the identification almost everywhere. After that $\mathoo N_b(\mathbb R^c,\mathbb E)$ becomes a normed space with the natural linear operations and the norm $\lVert\cdot\rVert_{\mathoo N_b}$.

 \begin{proposition}\label{p:N acts in L_p}
The function $n:\,\mathbb R^c\times\mathbb R^c\to\mathoo B(\mathbb E)$ is measurable if
and only if the function $n_1(x,y)=n(x,x-y)$ is measurable.
 \end{proposition}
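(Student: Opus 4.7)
The plan is to realize the passage from $n$ to $n_1$ as precomposition with a linear homeomorphism of $\mathbb R^c\times\mathbb R^c$, and then invoke the fact that such transformations preserve Lebesgue measurability of sets and of functions.

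First I would introduce the map $T:\mathbb R^c\times\mathbb R^c\to\mathbb R^c\times\mathbb R^c$ defined by $T(x,y)=(x,x-y)$ and observe that it is linear, bijective, and satisfies $T\circ T=\mathrm{id}$; in particular $T$ is a homeomorphism, $T^{-1}=T$, and the Jacobian determinant equals $(-1)^c\neq0$. With this notation one simply has $n_1=n\circ T$, so the proposition reduces to the statement that measurability of a function $\mathbb R^c\times\mathbb R^c\to\mathoo B(\mathbb E)$ is invariant under composition with $T$.

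Next I would verify that $T$ (and hence $T^{-1}=T$) sends Lebesgue measurable subsets of $\mathbb R^{2c}$ to Lebesgue measurable subsets. Since $T$ is continuous, it preserves the Borel $\sigma$-algebra; since $T$ has nonzero Jacobian (equivalently, $T$ is a bi-Lipschitz linear isomorphism), it preserves Lebesgue null sets; combining these facts with the description of Lebesgue measurable sets as unions of a Borel set and a null set, $T$ preserves Lebesgue measurability.

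Given this, the two implications are symmetric. Assuming $n$ is measurable, for every Borel set $B\subseteq\mathoo B(\mathbb E)$ the preimage $n_1^{-1}(B)=T^{-1}\bigl(n^{-1}(B)\bigr)$ is Lebesgue measurable, so $n_1$ is measurable; the converse follows by the same argument with the roles of $n$ and $n_1$ interchanged, since $T=T^{-1}$. The only real subtlety, hence the main (minor) obstacle, is to handle \emph{Lebesgue} (rather than Borel) measurability carefully: continuity of $T$ alone would suffice for the Borel case, but for Lebesgue measurability one genuinely needs the nonzero Jacobian in order to guarantee that null sets are carried to null sets.
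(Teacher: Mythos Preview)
Your argument is correct: writing $n_1=n\circ T$ with $T(x,y)=(x,x-y)$ an involutive linear automorphism of $\mathbb R^{2c}$ having Jacobian $(-1)^c$, and using that such a map preserves both Borel sets and Lebesgue null sets, is exactly the standard way to establish this equivalence. The paper does not give an independent proof but simply refers to \cite[Lemma~4.1.5]{Kurbatov99}, whose argument is the same change-of-variables reasoning you have written out, so your proof is in full agreement with the intended one.
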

 \begin{proof}
The proof is a word for word repetition of that of~\cite[Lemma 4.1.5]{Kurbatov99}.
 \end{proof}

For any $n\in\mathoo N_b(\mathbb R^c,\mathbb E)$, we denote by $\bar n$ the function that
assigns to each $x\in\mathbb R^c$ the function $\bar n(x):\,\mathbb R^c\to\mathoo
B(\mathbb E)$ defined by the rule
\begin{equation}\label{e:bar n}
\bar n(x)(y)=n(x,x-y);
\end{equation}
and for any $n\in\mathoo N_b(\mathbb R^c,\mathbb E)$, we denote by $\tilde n$ the function that assigns to each $x\in\mathbb R^c$ the function $\tilde n(x):\,\mathbb R^c\to\mathoo
B(\mathbb E)$ defined by the rule
\begin{equation}\label{e:tilde n}
\tilde n(x)(y)=n(x,y).
\end{equation}

\begin{proposition}\label{p:N_b kernel}
The normed space $\mathoo N_b(\mathbb R^c,\mathbb E)$ {\rm(}with the identification almost everywhere{\rm)} is isometrically isomorphic to $L_\infty\bigl(\mathbb R^c,L_1\bigl(\mathbb R^c,\mathoo B(\mathbb E)\bigr)\bigr)$.
\end{proposition}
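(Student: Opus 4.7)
The plan is to exhibit the natural mapping
\begin{equation*}
\Phi:\,\mathoo N_b(\mathbb R^c,\mathbb E)\to L_\infty\bigl(\mathbb R^c,L_1(\mathbb R^c,\mathoo B(\mathbb E))\bigr),\qquad\Phi(n)=\tilde n,
\end{equation*}
where $\tilde n$ is defined by~\eqref{e:tilde n}, and to check it is an isometric isomorphism. The isometry part is a direct unwinding of definitions:
\begin{equation*}
\lVert\tilde n\rVert_{L_\infty(L_1)}=\esssup_{x\in\mathbb R^c}\lVert\tilde n(x)\rVert_{L_1}=\esssup_{x\in\mathbb R^c}\lVert n(x,\cdot)\rVert_{L_1}=\lVert n\rVert_{\mathoo N_b}.
\end{equation*}

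To show $\Phi$ is well defined I have to verify that $\tilde n$ is strongly (Bochner) measurable as an $L_1(\mathbb R^c,\mathoo B(\mathbb E))$-valued function on $\mathbb R^c$. Since $\mathbb E$ is finite-dimensional, $\mathoo B(\mathbb E)$ is finite-dimensional, and $L_1(\mathbb R^c,\mathoo B(\mathbb E))$ is separable. Hence, by Pettis's measurability theorem, strong measurability reduces to weak measurability, i.e.\ to the measurability of the scalar maps
\begin{equation*}
x\mapsto\int_{\mathbb R^c}\langle n(x,y),g(y)\rangle\,dy,\qquad g\in L_\infty(\mathbb R^c,\mathoo B(\mathbb E)^*),
\end{equation*}
which follows from the joint measurability of $n$ and Fubini--Tonelli.

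For surjectivity, given $f\in L_\infty(\mathbb R^c,L_1(\mathbb R^c,\mathoo B(\mathbb E)))$ I first approximate $f$ in the $L_\infty(L_1)$-norm by $L_1$-valued simple functions $f_k=\sum_{j=1}^{N_k}\chi_{A_j^k}\,g_j^k$ with $g_j^k\in L_1(\mathbb R^c,\mathoo B(\mathbb E))$; this is possible because the target is separable, so strongly measurable essentially bounded functions are uniform-norm limits of simple functions off a null set. Each $f_k$ arises from the jointly measurable kernel $n_k(x,y)=\sum_j\chi_{A_j^k}(x)g_j^k(y)\in\mathoo N_b$ with $\widetilde{n_k}=f_k$. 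Since $\{f_k\}$ is Cauchy in $L_\infty(L_1)$, the isometry already established implies $\{n_k\}$ is Cauchy in $\mathoo N_b$; extracting a subsequence gives pointwise a.e.\ convergence of $n_k(x,\cdot)$ in $L_1$, and a standard Fubini argument delivers a jointly measurable representative $n(x,y)$ with $\tilde n=f$.

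The main obstacle is the measurability bookkeeping --- translating between joint measurability of $n$ on $\mathbb R^c\times\mathbb R^c$ and strong measurability of the vector-valued section map $x\mapsto n(x,\cdot)$. The finite-dimensionality of $\mathbb E$, which forces $L_1(\mathbb R^c,\mathoo B(\mathbb E))$ to be separable, is exactly what makes Pettis's theorem applicable and collapses the potential gap between strong, weak, and scalar measurability; everything else is a routine application of Fubini and approximation by simple functions.
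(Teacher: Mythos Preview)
Your forward direction and isometry are fine; the Pettis argument is a clean way to get strong measurability of $\tilde n$, and it is more explicit than what the paper does.

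The gap is in surjectivity. Your claim that a strongly measurable, essentially bounded $f:\mathbb R^c\to L_1(\mathbb R^c,\mathoo B(\mathbb E))$ can be approximated in the $L_\infty(L_1)$-norm by \emph{simple} (finitely-valued) functions is false. Separability of the target gives uniform approximation only by \emph{countably}-valued measurable functions; uniform approximation by finite-valued ones would force the essential range of $f$ to be totally bounded, which it need not be. A concrete obstruction: $f(t)=\chi_{[t,t+1]}\in L_1(\mathbb R)$ is continuous and bounded, but $\lVert f(t)-f(s)\rVert_{L_1}=2$ whenever $|t-s|\ge1$, so no finite-range function can be $L_\infty$-close to $f$ on all of $\mathbb R$. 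The repair is immediate: replace the finite sums $\sum_{j=1}^{N_k}$ by countable ones $\sum_{j=1}^{\infty}\chi_{A_j^k}\,g_j^k$; the associated kernels $n_k(x,y)=\sum_j\chi_{A_j^k}(x)\,g_j^k(y)$ are still jointly measurable (as pointwise limits of measurable partial sums), and your Cauchy/Fubini extraction of the limiting jointly measurable $n$ then goes through.

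For comparison, the paper avoids any $L_\infty$-approximation altogether. It restricts $\tilde n$ to a compact $K\subset\mathbb R^c$, uses the inclusion $L_\infty\bigl(K,L_1\bigr)\subset L_1\bigl(K,L_1\bigr)$ together with the standard identification $L_1\bigl(K,L_1(\mathbb R^c,\mathoo B(\mathbb E))\bigr)\cong L_1\bigl(K\times\mathbb R^c,\mathoo B(\mathbb E)\bigr)$ to produce a jointly measurable $n_K$ on $K\times\mathbb R^c$, and then patches over an exhaustion by compacts. This is shorter and sidesteps both the approximation step and the limit extraction, at the price of quoting the $L_1(K,L_1)\cong L_1(K\times\mathbb R^c)$ isomorphism as a black box.
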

\begin{proof}
Let $n\in\mathoo N_b(\mathbb R^c,\mathbb E)$. By the Fubini theorem~\cite{Bourbaki_Integration:fr}, the restriction of $n$ to the set $K\times\mathbb R^c$, where $K\subset\mathbb R^c$ is an arbitrary compact set, is summable. Therefore the restriction $\tilde n_K$ of $\tilde n$ to $K$ is a function of the class $L_1$. It is clear that actually $\tilde n\in L_\infty$ and $\lVert\tilde n\rVert_{L_\infty}=\lVert n\rVert_{\mathoo N_b}$.

Conversely, let $\tilde n\in L_\infty\bigl(\mathbb R^c,L_1\bigl(\mathbb R^c,\mathoo B(\mathbb E)\bigr)\bigr)$. Let $K\subset\mathbb R^c$ be compact. Clearly, the restriction $\tilde n_K$ of $\tilde n$ to $K$ belongs to $L_\infty\bigl(K,L_1\bigl(\mathbb R^c,\mathoo B(\mathbb E)\bigr)\bigr)$. Further, the space $L_\infty\bigl(K,L_1\bigl(\mathbb R^c,\mathoo B(\mathbb E)\bigr)\bigr)$ is included into $L_1\bigl(K,L_1\bigl(\mathbb R^c,\mathoo B(\mathbb E)\bigr)\bigr)$. It is known (see, e.g.,~\cite[Corollary 1.5.9]{Kurbatov99}) that the space $L_1\bigl(K,L_1\bigl(\mathbb R^c,\mathoo B(\mathbb E)\bigr)\bigr)$ is naturally isometrically isomorphic to $L_1\bigl(K\times\mathbb R^c,\mathoo B(\mathbb E)\bigr)$. Thus, we obtain a measurable function $n_K:\,K\times\mathbb R^c\to\mathoo B(\mathbb E)$. For almost all $x\in\mathbb R^c$ the function $n_K(x,\cdot)$ (is defined almost everywhere and) belongs to $\mathcal L_1\bigl(\mathbb R^c,\mathoo B(\mathbb E)\bigr)$ and
\begin{equation*}
\esssup_{x\in\mathbb R^c}\lVert n_K(x,\cdot)\rVert_{L_1}\le\lVert \tilde n\rVert_{L_\infty}.
\end{equation*}
Clearly, if $K\subseteq K_1$ are compact sets, the restriction of $n_{K_1}$ to $K\times\mathbb R^c$ coincides almost everywhere with $n_k$.
\end{proof}

\begin{proposition}\label{p:N_b operator}
For any $n\in\mathoo N_b(\mathbb R^c,\mathbb E)$, the operator
 \begin{equation}\label{e:operator N'}
\bigl(Nu\bigr)(x)=\int_{\mathbb R^c}n(x,x-y)\,u(y)\,dy
 \end{equation}
acts in $L_\infty(\mathbb R^c,\mathbb E)$. More precisely, for any
$u\in\mathscr L_\infty(\mathbb R^c,\mathbb E)$ the function $y\mapsto n(x,x-y)\,u(y)$ is
integrable for almost all $x$, and the function $Nu$ belongs to $\mathscr L_\infty(\mathbb
R^c,\mathbb E)${\rm;} if $u_1$ and $u_2$ coincide almost everywhere, then $Nu_1$ and
$Nu_2$ also coincide almost everywhere. Besides,
 \begin{equation}\label{e:norm of N_b}
c\lVert n\rVert_{\mathoo N_b}\le\Vert N:\,L_\infty\to L_\infty\Vert\le\esssup_{x\in\mathbb R^c}\lVert n(x,\cdot)\rVert_{L_1},
 \end{equation}
where $c$ is independent of $n$.
\end{proposition}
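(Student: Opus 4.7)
The plan is to verify in turn three claims: (a) the integral defining $N$ is a well-posed element of $L_\infty$; (b) the upper estimate in \eqref{e:norm of N_b}; and (c) the lower estimate, which carries the only real difficulty.

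For (a) and (b), joint measurability of $(x,y)\mapsto n(x,x-y)u(y)$ is immediate from Proposition \ref{p:N acts in L_p} and the measurability of $u$. For almost every $x$ the section $y\mapsto n(x,x-y)$ is integrable (since $\lVert n(x,\cdot)\rVert_{L_1}\le\lVert n\rVert_{\mathoo N_b}<\infty$), and the pointwise bound $|n(x,x-y)u(y)|\le\lVert n(x,x-y)\rVert\,\lVert u\rVert_{L_\infty}$ together with Fubini yields measurability of $Nu$ and the estimate
\begin{equation*}
|(Nu)(x)|\le\lVert u\rVert_{L_\infty}\lVert n(x,\cdot)\rVert_{L_1}.
\end{equation*}
Taking essential suprema gives (b). Invariance of $Nu$ under a.e.\ changes of $u$ follows because the $y$-integration is blind to null sets.

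For (c) I would combine two ingredients. First, since $\mathbb E$ is a $d$-dimensional Hilbert space with orthonormal basis $e_1,\dots,e_d$, there is an absolute constant $c_1$ such that $\lVert A\rVert\le c_1\max_{i,j}|\langle Ae_i,e_j\rangle|$ for all $A\in\mathoo B(\mathbb E)$. Second, by Proposition \ref{p:N_b kernel} the map $\bar n$ of \eqref{e:bar n} lies in $L_\infty\bigl(\mathbb R^c,L_1(\mathbb R^c,\mathoo B(\mathbb E))\bigr)$ with separable target, so the Lebesgue differentiation theorem for vector-valued functions applies to $\bar n$. Fix $\varepsilon>0$, let $M=\lVert n\rVert_{\mathoo N_b}$, pick a Lebesgue point $x_0$ of $\bar n$ with $\lVert n(x_0,\cdot)\rVert_{L_1}>M-\varepsilon$, and (by pigeonhole on the matrix entries) indices $i,j$ with $\int|\langle n(x_0,x_0-y)e_i,e_j\rangle|\,dy\ge(M-\varepsilon)/(c_1 d^2)$. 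Define $u(y)=\varphi(y)e_i$, where $\varphi:\mathbb R^c\to\mathbb U\cup\{0\}$ is a measurable unimodular selector aligning $\varphi(y)\langle n(x_0,x_0-y)e_i,e_j\rangle$ with $|\langle n(x_0,x_0-y)e_i,e_j\rangle|$; then $\lVert u\rVert_{L_\infty}\le1$. The Lebesgue-point property produces a set $F\subset\mathbb R^c$ of positive measure on which $\lVert\bar n(x)-\bar n(x_0)\rVert_{L_1}<\varepsilon$, and for $x\in F$
\begin{equation*}
|(Nu)(x)|\ge|\langle (Nu)(x),e_j\rangle|\ge\frac{M-\varepsilon}{c_1 d^2}-\varepsilon.
\end{equation*}
Hence $\lVert N\rVert\ge(M-\varepsilon)/(c_1 d^2)-\varepsilon$; letting $\varepsilon\downarrow0$ yields (c) with $c=1/(c_1 d^2)$.

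The principal obstacle is the passage from a single-point value $|(Nu)(x_0)|$ to a bound on the essential supremum $\lVert Nu\rVert_{L_\infty}$: the $L_\infty$-norm cannot see individual points, so the estimate must be spread to a set of positive measure. Lebesgue differentiation of the $L_1$-valued section map $\bar n$ is precisely what spreads it, and this is the only delicate step in the argument.
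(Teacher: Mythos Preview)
Your argument is correct and, for the parts the paper actually proves, follows the same line. For (a) and (b) the paper restricts to compact sets $K\subset\mathbb R^c$, observes that $(x,y)\mapsto n(x,x-y)u(y)$ lies in $L_1(K\times\mathbb R^c,\mathbb E)$, and then applies Fubini; your version is slightly more compressed but in the same spirit. (One cosmetic point: to invoke Fubini for measurability of $Nu$ you do need the compact-set localization, since the integrand is not globally in $L_1(\mathbb R^c\times\mathbb R^c)$.)

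For the lower bound (c) the paper gives no argument at all---it simply refers to \cite[Proposition~3.7]{Kurbatov-Kuznetsova16}---so your Lebesgue-differentiation proof supplies what the paper omits. The strategy (choose a Lebesgue point $x_0$ of the $L_1$-valued section at which the kernel norm nearly attains its essential supremum, build a unimodular test function adapted to one matrix entry at $x_0$, then use the Lebesgue-point property plus Chebyshev to spread the estimate to a set of positive measure) is a standard and sound route; it also makes transparent why the finite-dimensionality of $\mathbb E$ enters through the constant $c=1/(c_1d^2)$. One small cleanup: Proposition~\ref{p:N_b kernel} literally identifies $\tilde n$ (not $\bar n$) with an element of $L_\infty\bigl(\mathbb R^c,L_1\bigr)$; to apply Lebesgue differentiation to $\bar n$ you should note that $\bar n(x)=R_x\tilde n(x)$ with $R_x g(y)=g(x-y)$, and that strong measurability is preserved because $(x,g)\mapsto R_xg$ is jointly continuous on $\mathbb R^c\times L_1$.
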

\begin{proof}
Let $K\subset\mathbb R^c$ be compact. Then the function $(x,y)\mapsto n(x,x-y)\,u(y)$, $x\in K$, $y\in\mathbb R^c$, belongs to $L_1\bigl(K\times\mathbb R^c,\mathoo B(\mathbb E)\bigr)$. Therefore, by the Fubini theorem, the function $y\mapsto n(x,x-y)\,u(y)$ is
integrable for almost all $x\in K$. The estimate
\begin{equation*}
\esssup_{x\in\mathbb R^c}\biggl\lVert \int_{\mathbb R^c}n(x,x-y)\,u(y)\,dy\biggr\rVert\le\esssup_{x\in\mathbb R^c}\lVert n(x,\cdot)\rVert_{L_1}
\end{equation*}
is evident.

The proof of the first part of estimate~\eqref{e:norm of N_b} repeats the proof of \cite[Proposition~3.7]{Kurbatov-Kuznetsova16}.
\end{proof}

 \begin{proposition}\label{p:ShNS-h:infty}
Let $n\in\mathoo N_b$, and the operator $N$ be defined by formula~\eqref{e:operator N'}.
Then for any $h\in\mathbb R^c$ the operator $S_hNS_{-h}$ is defined by the formula
\begin{equation*}
\bigl(S_hNS_{-h}u\bigr)(x)=\int_{\mathbb R^c}n(x-h,x-y)\,u(y)\,dy.
\end{equation*}
Thus, the operator $S_hNS_{-h}$ is generated by the kernel $(x,y)\mapsto n(x-h,y)$.
 \end{proposition}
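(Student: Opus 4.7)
The plan is to verify the formula by unwinding the three operators $S_{-h}$, $N$, $S_h$ in turn and then performing a translation of the integration variable. First I would compute $(S_{-h}u)(y)=u(y+h)$, and substitute this into the defining formula~\eqref{e:operator N'} for $N$, obtaining
\[
(NS_{-h}u)(x)=\int_{\mathbb R^c}n(x,x-y)\,u(y+h)\,dy.
\]
Next I would apply the change of integration variable $y\mapsto y-h$ (using translation invariance of Lebesgue measure on $\mathbb R^c$), which rewrites this as
\[
(NS_{-h}u)(x)=\int_{\mathbb R^c}n(x,(x+h)-y)\,u(y)\,dy.
\]
Finally, applying $S_h$ amounts to replacing $x$ by $x-h$ in the outer variable, and the two occurrences of $h$ cancel in the second slot of $n$, yielding
\[
(S_hNS_{-h}u)(x)=\int_{\mathbb R^c}n(x-h,x-y)\,u(y)\,dy,
\]
which is the asserted formula. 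Reading off the kernel in the sense of~\eqref{e:operator N'}, the operator $S_hNS_{-h}$ is generated by $(x,y)\mapsto n(x-h,y)$.

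Two minor bookkeeping items remain. First, the shifted kernel $(x,y)\mapsto n(x-h,y)$ still belongs to $\mathoo N_b$, because $\esssup_{x}\lVert n(x-h,\cdot)\rVert_{L_1}=\esssup_{x}\lVert n(x,\cdot)\rVert_{L_1}=\lVert n\rVert_{\mathoo N_b}$ by translation invariance of the Lebesgue measure. Second, measurability of $(x,y)\mapsto n(x-h,x-y)$ follows from measurability of $n$ together with Proposition~\ref{p:N acts in L_p}, so Proposition~\ref{p:N_b operator} applies to the shifted kernel and certifies that $S_hNS_{-h}\in\mathoo B(L_\infty)$ and that the manipulations above are legitimate for a.e.\ $x$ and for every $u\in\mathscr L_\infty$.

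There is essentially no obstacle; the statement is a direct verification confirming that the ``outer'' argument of $n$ behaves as a coefficient variable on which $S_h$ acts by ordinary translation, while the ``inner'' (convolution) argument is untouched because the shifts of $u$ and of $x$ cancel.
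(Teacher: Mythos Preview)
Your proof is correct and follows essentially the same approach as the paper's: both compute $NS_{-h}u$, then apply $S_h$ and a translation of the integration variable; the only difference is that the paper applies $S_h$ before the change of variable while you do the change of variable first, which is immaterial. Your additional remarks about the shifted kernel remaining in $\mathoo N_b$ and measurability are fine but go beyond what the paper records.
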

  \begin{proof}
Indeed, we have
\begin{align*}
\bigl(NS_{-h}u\bigr)(x)&=\int_{\mathbb R^c}n(x,x-y)\,u(y+h)\,dy,\\
\bigl(S_hNS_{-h}u\bigr)(x)&=\int_{\mathbb R^c}n(x-h,x-h-y)\,u(y+h)\,dy\\
&=\int_{\mathbb R^c}n(x-h,x-y)\,u(y)\,dy.\qed
\end{align*}
\renewcommand\qed{}
 \end{proof}

We denote by $\mathoo N_b(L_\infty)$ the set of all operators induced by kernels $n\in\mathoo N_b(\mathbb R^c,\mathbb E)$.

\begin{corollary}\label{c:N_b iso}
The spaces $L_\infty\bigl(\mathbb R^c,L_1\bigl(\mathbb R^c,\mathoo B(\mathbb E)\bigr)\bigr)$ and $\mathoo N_b(L_\infty)$ are topologically isomorphic. Moreover, the natural correspondence $U:\,\tilde n\mapsto N$ maps $S_h\tilde n$ to $N\{h\}$ for all $h\in\mathbb R^c$. Thus, a function $\tilde n\in L_\infty\bigl(\mathbb R^c,L_1\bigl(\mathbb R^c,\mathoo B(\mathbb E)\bigr)\bigr)$ is almost periodic if and only if the associated operator $N\in\mathoo N_b(L_\infty)$ is almost periodic.
\end{corollary}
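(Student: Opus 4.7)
The plan is to obtain the corollary by composing the two preceding propositions and then transferring the almost periodicity condition through the resulting conjugacy. First I would combine Proposition~\ref{p:N_b kernel}, which identifies $\mathoo N_b(\mathbb R^c,\mathbb E)$ isometrically with $L_\infty\bigl(\mathbb R^c,L_1\bigl(\mathbb R^c,\mathoo B(\mathbb E)\bigr)\bigr)$ via $\tilde n\leftrightarrow n$, with Proposition~\ref{p:N_b operator}, whose two-sided estimate~\eqref{e:norm of N_b} shows that the map $n\mapsto N$ is a topological isomorphism from $\mathoo N_b(\mathbb R^c,\mathbb E)$ onto $\mathoo N_b(L_\infty)$. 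The composition of these two maps yields the desired topological isomorphism $U\colon\tilde n\mapsto N$.

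Next I would verify the intertwining relation $U(S_h\tilde n)=N\{h\}$ by direct computation. By definition~\eqref{e:tilde n}, $(S_h\tilde n)(x)(y)=\tilde n(x-h)(y)=n(x-h,y)$, so under the isomorphism of Proposition~\ref{p:N_b kernel} the shift $S_h\tilde n$ corresponds to the kernel $(x,y)\mapsto n(x-h,y)$. By Proposition~\ref{p:ShNS-h:infty}, this kernel generates precisely the operator $S_hNS_{-h}=N\{h\}$. Thus $U$ conjugates the translation action $h\mapsto S_h$ on $L_\infty\bigl(\mathbb R^c,L_1(\mathbb R^c,\mathoo B(\mathbb E))\bigr)$ with the similarity action $N\mapsto N\{h\}$ on $\mathoo N_b(L_\infty)$.

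Both ingredients used in the definitions of almost periodicity recalled in Section~\ref{s:AP operators} — norm-continuity of the orbit map in $h$ and relative compactness of its range — are preserved by any topological isomorphism that intertwines the two group actions. Hence the orbit $\{S_h\tilde n:h\in\mathbb R^c\}$ is relatively compact and depends continuously on $h$ if and only if $\{N\{h\}:h\in\mathbb R^c\}$ has the same two properties, which is exactly the equivalence asserted.

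The one technical point I expect to require care is the interpretation of ``almost periodic'' for $\tilde n\in L_\infty$, since the definition from Section~\ref{s:AP operators} is stated for functions in $C(\mathbb R^c,X)$. The resolution is that norm-continuity of the orbit map $h\mapsto S_h\tilde n$ in $L_\infty\bigl(\mathbb R^c,L_1(\mathbb R^c,\mathoo B(\mathbb E))\bigr)$ automatically forces $\tilde n$ to coincide almost everywhere with a uniformly continuous — hence $C$-valued — representative; once $\tilde n$ is identified with this continuous representative, the definition of an almost periodic function applies without ambiguity and the argument above goes through.
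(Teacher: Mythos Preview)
Your proposal is correct and follows essentially the same approach as the paper's proof: combine Propositions~\ref{p:N_b kernel} and~\ref{p:N_b operator} to get the topological isomorphism $U$, then use Proposition~\ref{p:ShNS-h:infty} to verify $U(S_h\tilde n)=N\{h\}$, and conclude that almost periodicity transfers through $U$. Your treatment is in fact slightly more careful than the paper's: you explicitly note that both norm-continuity of the orbit map and relative compactness of the orbit must be preserved, and you address the subtlety that almost periodicity of $\tilde n$ was defined only for continuous functions (the paper's proof elides both points).
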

\begin{proof}
By Propositions~\ref{p:N_b kernel} and~\ref{p:N_b operator}, the correspondence $U:\,\tilde n\mapsto N$ is a topological isomorphism from $L_\infty\bigl(\mathbb R^c,L_1\bigl(\mathbb R^c,\mathoo B(\mathbb E)\bigr)\bigr)$ onto $\mathoo N_b(L_\infty)$. From Proposition~\ref{p:N_b operator} it follows that the operator $S_hNS_{-h}$ is generated by the function $S_h\tilde n$, $h\in\mathbb R^c$. Thus, the two functions $h\mapsto S_h\tilde n$ and $h\mapsto N\{h\}$ are connected by the isomorphism $U:\,\tilde n\mapsto N$, i.~e., $U:\,S_h\tilde n\mapsto N\{h\}$ for all $h\in\mathbb R^c$. Since $U$ is a topological isomorphism, the images of these two functions are relatively compact simultaneously. Thus, $\tilde n$ and $N$ are almost periodic simultaneously as well.
\end{proof}

\begin{corollary}\label{c:shift operator}
Let an operator $N\in\mathoo N_b$ be shift invariant, i.~e.,
\begin{equation*}
NS_h=S_hN,\qquad h\in\mathbb R^c.
\end{equation*}
Then $N$ is an operator of convolution with a function $g\in L_1\bigl(\mathbb R^c,\mathoo B(\mathbb E)\bigr)$.
\end{corollary}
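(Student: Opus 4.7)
The plan is to transfer the shift invariance of $N$ to its kernel $\tilde n$ via the isomorphism of Corollary~\ref{c:N_b iso}, conclude that $\tilde n$ is a translation-invariant element of $L_\infty\bigl(\mathbb R^c,L_1(\mathbb R^c,\mathoo B(\mathbb E))\bigr)$, and then show by a Fubini argument that any such element is a.e. equal to a constant $g$; this constant will turn out to be the sought convolution kernel.

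First I would rewrite the hypothesis $NS_h=S_hN$ as $S_hNS_{-h}=N$ for every $h\in\mathbb R^c$. By Proposition~\ref{p:ShNS-h:infty} the operator $S_hNS_{-h}$ is induced by the kernel $(x,y)\mapsto n(x-h,y)$, whose associated element in $L_\infty\bigl(\mathbb R^c,L_1(\mathbb R^c,\mathoo B(\mathbb E))\bigr)$ under the isomorphism $U$ of Corollary~\ref{c:N_b iso} is precisely $S_h\tilde n$. Thus the hypothesis becomes $S_h\tilde n=\tilde n$ for every $h\in\mathbb R^c$; unfolding the $L_\infty(L_1)$ identity, this means that for each fixed $h$ one has $n(x-h,y)=n(x,y)$ for almost every $(x,y)\in\mathbb R^c\times\mathbb R^c$.

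Next I would apply Fubini to the jointly measurable non-negative scalar function $\Phi(x,y,h)=|n(x-h,y)-n(x,y)|$ on $\mathbb R^c\times\mathbb R^c\times\mathbb R^c$; joint measurability of $n$ in both variables is guaranteed by Proposition~\ref{p:N acts in L_p}. For each fixed $h$ the $(x,y)$-slice of $\Phi$ vanishes a.e., so $\Phi=0$ almost everywhere on $\mathbb R^{3c}$. Interchanging the order, for almost every $(x_0,y_0)$ the function $z\mapsto n(z,y_0)$ is a.e. equal to $n(x_0,y_0)$. Choosing an $x_0$ that is good for a.e. $y_0$ and for which $n(x_0,\cdot)\in L_1$, and putting $g(y):=n(x_0,y)$, one more application of Fubini gives $n(x,y)=g(y)$ for a.e. $(x,y)$, with $g\in L_1(\mathbb R^c,\mathoo B(\mathbb E))$. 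Substituting this into formula~\eqref{e:operator N'} yields $(Nu)(x)=\int_{\mathbb R^c}g(x-y)\,u(y)\,dy$, which is convolution with $g$.

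The main technical point is the Fubini step: the null exceptional set in the hypothesis depends on $h$, and we have to convert ``for each $h$, a.e. $(x,y)$'' into ``for a.e. $(x,y)$, a.e. $h$.'' Joint measurability of $n(x,y)$ as provided by Proposition~\ref{p:N acts in L_p} is exactly what makes this conversion legitimate; once this is in place, the remaining bookkeeping (choice of a good $x_0$, passage from $\tilde n$ to $g$) is routine.
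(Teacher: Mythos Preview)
Your proof is correct and follows essentially the same route as the paper: both reduce, via Proposition~\ref{p:ShNS-h:infty} and Corollary~\ref{c:N_b iso}, to showing that $S_h\tilde n=\tilde n$ for all $h$ forces $\tilde n$ to be constant a.e., and both accomplish this by a Fubini swap converting ``for each $h$, a.e.~in the remaining variables'' into ``for a.e.~point, a.e.~$h$.'' The only cosmetic difference is the level at which the Fubini argument is run: the paper works with the $L_1$-valued function $v(t,h)=\tilde n(t)-\tilde n(t-h)$ on $\mathbb R^c\times\mathbb R^c$, whereas you unfold to the scalar function $\Phi(x,y,h)=|n(x-h,y)-n(x,y)|$ on $\mathbb R^{3c}$; the logic is the same. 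One small remark: the joint measurability you need for $\Phi$ is not really the content of Proposition~\ref{p:N acts in L_p} but simply the fact that $n\in\mathoo N_b$ is measurable by definition and $(x,y,h)\mapsto(x-h,y)$ is continuous.
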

\begin{proof}
By Propositions~\ref{p:N_b operator} and~\ref{p:ShNS-h:infty}, it is enough to prove that if a function $\tilde n\in\mathcal L_\infty\bigl(\mathbb R^c, L_1\bigl(\mathbb R^c,\mathoo B(\mathbb E)\bigr)\bigr)$ possesses the equalities (almost everywhere)
\begin{equation*}
S_h\tilde n=\tilde n,
\end{equation*}
for all $h\in\mathbb R^c$, then $\tilde n$ is a constant function almost everywhere.

We consider the function
\begin{equation*}
v(t,h)=\tilde n(t)-\tilde n(t-h),\qquad t,h\in\mathbb R^c.
\end{equation*}
It is measurable, see, e.g.,~\cite[Lemma 4.1.5]{Kurbatov99}. By assumption, for any $h\in\mathbb R^c$ the function $t\mapsto v(t,h)$ equals zero almost everywhere. Hence, by the Fubini theorem, the function $v$ equals zero almost everywhere. Therefore for almost all $t\in\mathbb R^c$ the function $h\mapsto v(t,h)$ equals zero almost everywhere. Let us take such a point $t$. It follows that $\tilde n(t-h)=\tilde n(t)$ for almost all $h$, which was to be proved.
\end{proof}

\begin{proposition}\label{p:N_{b}}
The set $\mathoo N_{b}\bigl(L_\infty\bigr)$ is a subalgebra of the algebra $\mathoo B(L_\infty)$.
\end{proposition}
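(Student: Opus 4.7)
The plan is to verify the three subalgebra axioms. If $N_1, N_2 \in \mathoo N_b(L_\infty)$ are induced by kernels $n_1, n_2 \in \mathoo N_b(\mathbb R^c, \mathbb E)$, then for any $\lambda, \mu \in \mathbb C$ the operator $\lambda N_1 + \mu N_2$ is induced by the kernel $\lambda n_1 + \mu n_2$, which lies in $\mathoo N_b$ by the triangle inequality. The substantive step is closure under composition, and the strategy is to compute the composed kernel explicitly and check that it belongs to $\mathoo N_b$.

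For $u \in L_\infty(\mathbb R^c, \mathbb E)$, the formal computation of $(N_1 N_2 u)(x)$ yields the iterated integral
\[
(N_1 N_2 u)(x) = \int_{\mathbb R^c} n_1(x, x-z) \int_{\mathbb R^c} n_2(z, z-y) u(y) \, dy \, dz.
\]
First I would justify Fubini for the $(z, y)$-integration: bounding $|u(y)| \le \lVert u \rVert_{L_\infty}$ and integrating in $y$ first produces $\lVert n_2(z, \cdot) \rVert_{L_1} \le \lVert n_2 \rVert_{\mathoo N_b}$, so the double integrand is absolutely integrable in $(z, y)$ with uniform bound $\lVert u \rVert_{L_\infty} \lVert n_1(x, \cdot) \rVert_{L_1} \lVert n_2 \rVert_{\mathoo N_b}$, which is finite for a.e.\ $x$. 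Swapping the integrals and applying the substitution $s = x - z$ then gives $(N_1 N_2 u)(x) = \int_{\mathbb R^c} k(x, x-y) u(y) \, dy$ with composed kernel
\[
k(x, \xi) = \int_{\mathbb R^c} n_1(x, s) \, n_2(x - s, \xi - s) \, ds.
\]

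It remains to verify $k \in \mathoo N_b(\mathbb R^c, \mathbb E)$. Measurability of $(x, \xi) \mapsto k(x, \xi)$ follows from measurability of the integrand $(x, s, \xi) \mapsto n_1(x, s) n_2(x - s, \xi - s)$ (since the change of variables $(x, s, \xi) \mapsto (x - s, \xi - s)$ is continuous and hence preserves measurability) combined with a Fubini argument applied to $k$ itself. For the norm, the same estimate as above yields $\lVert k(x, \cdot) \rVert_{L_1} \le \lVert n_1(x, \cdot) \rVert_{L_1} \lVert n_2 \rVert_{\mathoo N_b}$, and taking $\esssup$ in $x$ produces the sub-multiplicative bound $\lVert k \rVert_{\mathoo N_b} \le \lVert n_1 \rVert_{\mathoo N_b} \lVert n_2 \rVert_{\mathoo N_b}$.

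The only mildly delicate point is the bookkeeping between the two equivalent parametrizations of a kernel --- $n(x, y)$ versus $\bar n(x)(y) = n(x, x - y)$ from~\eqref{e:bar n}: the substitution $s = x - z$ is precisely what reshapes the naive iterated integral into the canonical form $\int k(x, x - y) u(y) \, dy$ required for $N_1 N_2$ to be recognized as an element of $\mathoo N_b(L_\infty)$. Once the change of variables is performed, the measurability and norm estimates are routine consequences of Fubini and a Young-type inequality in the second variable.
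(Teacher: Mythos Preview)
Your proof is correct and follows essentially the same approach as the paper: compute the iterated integral for the composition, apply Fubini to swap the order of integration, identify the composed kernel, and bound its $\mathoo N_b$-norm by $\lVert n_1\rVert_{\mathoo N_b}\lVert n_2\rVert_{\mathoo N_b}$. The only cosmetic difference is that the paper localizes to a compact set $K\subset\mathbb R^c$ in the $x$-variable to justify Fubini and writes the composed kernel in the unshifted form $k(x,z)=\int n(x,x-y)\,m(y,y-z)\,dy$ (so that $(NMu)(x)=\int k(x,z)\,u(z)\,dz$), whereas you perform the substitution $s=x-z$ explicitly to put the kernel in the canonical form $k(x,x-y)$; these are equivalent by Proposition~\ref{p:N acts in L_p}.
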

\begin{proof}
Let $n,m\in\mathoo N_b$. Let $K\subset\mathbb R^c$ be a compact set. For almost all $x\in K$ we have
\begin{align}
\bigl(NMu\bigr)(x)&=\int_{\mathbb R^c}n(x,x-y)\,\bigl(Mu\bigr)(y)\,dy\notag\\
&=\int_{\mathbb R^c}n(x,x-y)\int_{\mathbb R^c}m(y,y-z)\,u(z)\,dz\,dy.\notag\\
\intertext{The function $(x,y,z)\mapsto n(x,x-y)\,m(y,y-z)\,u(z)$ is measurable and majorized by an absolutely integrable function on $K\times\mathbb R^c\times\mathbb R^c$; therefore, by the Fubini theorem, the order of integration may be changed. Thus, }
\bigl(NMu\bigr)(x)&=\int_{\mathbb R^c}\biggl(\int_{\mathbb R^c}n(x,x-y)\,m(y,y-z)\,dy\biggr)u(z)\,dz.\label{e:kernel k}
\end{align}
Again by the Fubini theorem, the function
\begin{equation*}
k(x,z)=\int_{\mathbb R^c}n(x,x-y)\,m(y,y-z)\,dy
\end{equation*}
defined by the internal integral in~\eqref{e:kernel k} exists for almost all $(x,z)\in K\times\mathbb R^c$; moreover, it is a summable function. Finally, for almost all $x$, we have
\begin{align*}
\int_{\mathbb R^c}\lVert k(x,z)\rVert\,dz&\le
\int_{\mathbb R^c}\biggl(\int_{\mathbb R^c}\lVert n(x,x-y)\rVert\,\lVert m(y,y-z)\rVert\,dy\biggr)\,dz\\
&=\int_{\mathbb R^c}\lVert n(x,x-y)\rVert\biggl(\int_{\mathbb R^c}\lVert m(y,y-z)\rVert\,dz\biggr)\,dy\\
&\le\lVert n\rVert_{\mathoo N_b}\,\lVert m\rVert_{\mathoo N_b},
\end{align*}
which means that $k\in\mathoo N_b$. From~\eqref{e:kernel k} it follows that the operator $NM$ is induced by the kernel $k$.
\end{proof}

\section{Integral operators with $L_1$-continuously varying kernels}\label{s:CN_1}
We denote by $\mathoo N_1=\mathoo N_1(\mathbb R^c,\mathbb E)$ the set of all measurable
functions $n:\,\mathbb R^c\times\mathbb R^c\to\mathoo B(\mathbb E)$ satisfying the
assumption: there exists a function $\beta\in\mathscr L_1(\mathbb R^c,\mathbb R)$ such that
\begin{equation}\label{e:est via beta}
\Vert n(x,y)\Vert\le\beta(y)
\end{equation}
for almost all $(x,y)\in\mathbb R^c\times\mathbb R^c$.
For convenience (without loss of generality), we assume that $\beta$ is defined
everywhere. Clearly, $\mathoo N_1\subset\mathoo N_b$. Kernels of the class $\mathoo N_1$ and operators induced by them were considered in~\cite{Beltita-Beltita15,Beltita-Beltita15e,Farrell-Strohmer10,Fendler-Leinert16,Kurbatov99,
Kurbatov-FDE01,Kurbatov-Kuznetsova16}. 

 \begin{proposition}[{\rm\cite[Proposition~5.4.3]{Kurbatov99}}]\label{p:5.4.3}
For any $n\in\mathoo N_1(\mathbb R^c,\mathbb E)$, the operator
 \begin{equation}\label{e:operator N}
\bigl(Nu\bigr)(x)=\int_{\mathbb R^c}n(x,x-y)\,u(y)\,dy
 \end{equation}
acts in $L_p(\mathbb R^c,\mathbb E)$ for all $1\le p\le\infty$. More precisely, for any
$u\in\mathscr L_p(\mathbb R^c,\mathbb E)$ the function $y\mapsto n(x,x-y)\,u(y)$ is
integrable for almost all $x$, and the function $Nu$ belongs to $\mathscr L_p(\mathbb
R^c,\mathbb E)${\rm;} if $u_1$ and $u_2$ coincide almost everywhere, then $Nu_1$ and
$Nu_2$ also coincide almost everywhere. Besides,
 \begin{equation}\label{e:norm of N}
\Vert N:\,L_p\to L_p\Vert\le\Vert\beta\Vert_{L_1}.
 \end{equation}
 \end{proposition}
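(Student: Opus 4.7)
The plan is to control the operator $N$ by reducing it to a scalar convolution with the dominating function $\beta$ and then invoking Young's convolution inequality. The three ingredients are: the measurability equivalence of Proposition~\ref{p:N acts in L_p}, the pointwise majorization~\eqref{e:est via beta}, and the standard estimate $\lVert\beta*f\rVert_{L_p}\le\lVert\beta\rVert_{L_1}\lVert f\rVert_{L_p}$ valid for scalar $f\in L_p(\mathbb R^c)$ and every $1\le p\le\infty$.

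First I would verify joint measurability of the integrand. Since $n$ is measurable by definition of $\mathoo N_1$, Proposition~\ref{p:N acts in L_p} gives measurability of $(x,y)\mapsto n(x,x-y)$, and multiplication by the measurable function $(x,y)\mapsto u(y)$ preserves it. Next, the hypothesis~\eqref{e:est via beta} yields
\begin{equation*}
\lVert n(x,x-y)\,u(y)\rVert\le\beta(x-y)\,|u(y)|
\end{equation*}
for almost all $(x,y)\in\mathbb R^c\times\mathbb R^c$. Hence, whenever the right-hand side is integrable in $y$,
\begin{equation*}
|(Nu)(x)|\le(\beta*|u|)(x).
\end{equation*}
Young's inequality applied to the scalar functions $\beta\in L_1(\mathbb R^c,\mathbb R)$ and $|u|\in L_p(\mathbb R^c,\mathbb R)$ gives $\lVert\beta*|u|\rVert_{L_p}\le\lVert\beta\rVert_{L_1}\lVert u\rVert_{L_p}$. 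In particular $(\beta*|u|)(x)<\infty$ for almost all $x$, so the inner integral defining $(Nu)(x)$ converges absolutely almost everywhere, and combining with the pointwise majorization establishes the norm bound~\eqref{e:norm of N}.

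Finally, invariance under the almost-everywhere identification is routine: if $u_1=u_2$ off a null set, then for each $x$ at which the integral converges the integrands $n(x,x-y)u_1(y)$ and $n(x,x-y)u_2(y)$ differ only on a null set in $y$, so $Nu_1=Nu_2$ almost everywhere. I do not expect a substantial obstacle. The endpoint $p=\infty$ is immediate from $\esssup_x\lVert n(x,\cdot)\rVert_{L_1}\le\lVert\beta\rVert_{L_1}$; the case $p=1$ reduces to a single application of the Fubini theorem to $(x,y)\mapsto\beta(x-y)|u(y)|$; and intermediate $p$ are handled by the Minkowski integral inequality packaged into Young's estimate. The only care required is the bookkeeping of null sets on which~\eqref{e:est via beta} fails, and the choice (permitted without loss of generality) of an everywhere-defined representative of $\beta$.
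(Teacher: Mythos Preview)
Your argument is correct and is the standard route to this result. Note, however, that the paper does not supply its own proof of this proposition: it is quoted verbatim from~\cite[Proposition~5.4.3]{Kurbatov99} with no accompanying \texttt{proof} environment. What you have written is essentially the proof one finds in that reference---domination by the scalar convolution $\beta*|u|$ followed by Young's inequality---so there is nothing to contrast.
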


We denote by $\mathoo N_1=\mathoo N_1(L_p)$, $1\le p\le\infty$, the set of all operators $N\in\mathoo B(L_p)$ of the form~\eqref{e:operator N} generated by $n\in\mathoo N_1(\mathbb R^c,\mathbb E)$.

 \begin{proposition}[{\rm\cite[Proposition~3.3]{Kurbatov-Kuznetsova16}}]\label{p:n and n_1}
If two functions $n,n_1\in\mathoo N_1(\mathbb R^c,\mathbb E)$ coincide almost everywhere
on $\mathbb R^c\times\mathbb R^c$, then they induce the same operator~\eqref{e:operator
N}.
 \end{proposition}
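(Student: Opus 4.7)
Set $d(x,y)=n(x,y)-n_1(x,y)$. Since both $n$ and $n_1$ are in $\mathoo N_1$, so is $d$ (with majorant the sum of the two majorants $\beta+\beta_1$), and by hypothesis $d=0$ almost everywhere on $\mathbb R^c\times\mathbb R^c$. The claim is equivalent to showing that the operator induced by $d$ via formula~\eqref{e:operator N} is the zero operator in $\mathoo B(L_p)$.

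The plan is to reduce everything to Fubini through a measure-preserving change of variables. First, I would observe that the map
\begin{equation*}
\Phi:\,\mathbb R^c\times\mathbb R^c\to\mathbb R^c\times\mathbb R^c,\qquad \Phi(x,y)=(x,x-y),
\end{equation*}
is a measurable bijection whose Jacobian determinant equals $(-1)^c$, so $\Phi$ preserves the Lebesgue measure on $\mathbb R^c\times\mathbb R^c$. Consequently, since the set $\{d\neq0\}\subset\mathbb R^c\times\mathbb R^c$ has measure zero, its preimage $\Phi^{-1}(\{d\neq0\})$ also has measure zero, which means that the function $(x,y)\mapsto d(x,x-y)$ vanishes almost everywhere on $\mathbb R^c\times\mathbb R^c$ (this is essentially the content of Proposition~\ref{p:N acts in L_p}, applied here to the difference).

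Next, I would fix an arbitrary $u\in\mathscr L_p(\mathbb R^c,\mathbb E)$. The function $(x,y)\mapsto d(x,x-y)\,u(y)$ is measurable and, by the preceding step, vanishes almost everywhere on $\mathbb R^c\times\mathbb R^c$. The uniform bound $\lVert d(x,y)\rVert\le(\beta+\beta_1)(y)$ together with Proposition~\ref{p:5.4.3} ensures that this function is locally integrable in $x$ (after restriction to any compact slice $K\times\mathbb R^c$), so Fubini's theorem applies on each such slice: for almost every $x\in K$ the function $y\mapsto d(x,x-y)\,u(y)$ is integrable and its integral is zero. Exhausting $\mathbb R^c$ by compacts $K$, I conclude that
\begin{equation*}
\bigl((N-N_1)u\bigr)(x)=\int_{\mathbb R^c}d(x,x-y)\,u(y)\,dy=0
\end{equation*}
for almost all $x\in\mathbb R^c$, so $Nu$ and $N_1u$ agree as elements of $L_p$.

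There is no real obstacle here; the only thing to be careful about is that the change-of-variables argument is legitimate because $\Phi$ is measure-preserving, and that the invocation of Fubini is justified by the $\beta$-majorant which guarantees the required integrability. Both of these are immediate consequences of the definition of $\mathoo N_1$ and of Proposition~\ref{p:N acts in L_p}.
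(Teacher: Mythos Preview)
Your argument is correct. The paper does not give its own proof of this proposition; it simply cites \cite[Proposition~3.3]{Kurbatov-Kuznetsova16} and states the result without proof, so there is nothing to compare against here. Your route---reducing to the difference kernel $d$, using the measure-preserving change of variables $(x,y)\mapsto(x,x-y)$ to transfer the null set, and then applying Fubini slice-wise---is the natural and standard one, and the references you invoke (Proposition~\ref{p:N acts in L_p} for measurability after the shear, and the $\beta$-majorant from the definition of $\mathoo N_1$ for the integrability needed in Fubini) are exactly the right ingredients. One minor simplification: once you know $(x,y)\mapsto d(x,x-y)$ vanishes almost everywhere, Fubini already gives that for almost every $x$ the slice $y\mapsto d(x,x-y)$ is zero almost everywhere, and then the integral against $u(y)$ is automatically zero for those $x$ without needing to multiply by $u$ first or restrict to compact $K$.
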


 \begin{theorem}[{\rm\cite[Theorem~5.4.7]{Kurbatov99}}]\label{t:5.4.7}
The subalgebra $\widetilde{\mathoo N_1}(L_p)$, $1\le p\le\infty$, is full in the algebra $\mathoo B(L_p)$ i.~e., if the operator $\mathbf1+N$, where $N\in\mathoo N_1(L_p)$, is
invertible, then $(\mathbf1+N)^{-1}=\mathbf1+M$, where $M\in\mathoo N_1(L_p)$.
 \end{theorem}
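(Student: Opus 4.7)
The plan is a standard three-step attack on inverse-closedness: make $\mathoo N_1$ into a Banach algebra, solve the small-norm case by Neumann series, and reduce the general case by truncation.

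First, I would equip $\mathoo N_1(\mathbb R^c,\mathbb E)$ with the natural seminorm $\|n\|_{\mathoo N_1}=\inf\{\|\beta\|_{L_1}\colon\|n(x,y)\|\le\beta(y)\text{ a.e.}\}$. A Fubini / change-of-variable calculation of the type used for Proposition~\ref{p:N_{b}}, but now tracking the second-variable bound carefully, shows that the kernel of $N_1 N_2$ is dominated by the convolution $\beta_1\ast\beta_2\in L_1$. Hence $\mathoo N_1$ is a non-unital Banach algebra continuously embedded into $\mathoo B(L_p)$ via Proposition~\ref{p:5.4.3}, and its unitisation $\widetilde{\mathoo N_1}$ is a unital Banach subalgebra. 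When $\|N\|_{\mathoo N_1}<1$, the Neumann series $\sum_{k\ge 0}(-N)^k$ converges absolutely in $\mathoo N_1$-norm, and $(\mathbf{1}+N)^{-1}-\mathbf{1}\in\mathoo N_1$ follows immediately.

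For arbitrary $N\in\mathoo N_1$ with $\mathbf{1}+N$ invertible in $\mathoo B(L_p)$, I would truncate: set $n_c(x,y)=n(x,y)\chi_{|y|\le M}$ and $r(x,y)=n(x,y)\chi_{|y|>M}$, so that $\|R\|_{\mathoo N_1}$ (hence $\|R\|_{\mathoo B(L_p)}$) can be made arbitrarily small by taking $M$ large. The identity $\mathbf{1}+N=(\mathbf{1}+N_c)\bigl(\mathbf{1}+(\mathbf{1}+N_c)^{-1}R\bigr)$, combined with a standard Neumann perturbation in $\mathoo B(L_p)$, forces $\mathbf{1}+N_c$ to be invertible in $\mathoo B(L_p)$ as soon as $\|R\|_{\mathoo B(L_p)}<\|(\mathbf{1}+N)^{-1}\|^{-1}$. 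If one can moreover invert $\mathbf{1}+N_c$ inside $\widetilde{\mathoo N_1}$, then the second factor is handled by the Neumann argument above applied in $\widetilde{\mathoo N_1}$, provided $M$ is large enough that $\|R\|_{\mathoo N_1}\,\|(\mathbf{1}+N_c)^{-1}\|_{\widetilde{\mathoo N_1}}<1$, and multiplying yields $(\mathbf{1}+N)^{-1}\in\widetilde{\mathoo N_1}$.

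The main obstacle is the reduced problem: given $n_c$ with compact support in the second variable and $\mathbf{1}+N_c$ invertible in $\mathoo B(L_p)$, show $(\mathbf{1}+N_c)^{-1}-\mathbf{1}\in\mathoo N_1$ with a quantitative bound strong enough to beat the $\|R\|_{\mathoo N_1}$ remaining after truncation. This is a Wiener-type lemma for banded (finite-range in the translation sense) integral operators: compact $y$-support gives $N_c$ a locality structure, and one must argue that invertibility of $\mathbf{1}+N_c$ forces the inverse kernel to decay in an $L_1$-dominated way in the difference variable. The strategy I would pursue is localisation: cover $\mathbb R^c$ by large overlapping cells, exploit the finite range of $n_c$ to assemble $(\mathbf{1}+N_c)^{-1}$ from locally constructed inverses glued by a partition of unity, and control the kernel of the resulting operator via the band structure together with the a priori bound on $\|(\mathbf{1}+N_c)^{-1}\|_{\mathoo B(L_p)}$. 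This step---in spirit parallel to the Bochner--Phillips argument behind Theorem~\ref{t:B_{APW} is full}---contains the genuine technical core of the proof of Theorem~\ref{t:5.4.7} in~\cite{Kurbatov99}.
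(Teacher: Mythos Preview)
The paper does not prove this theorem at all: it is quoted verbatim from \cite[Theorem~5.4.7]{Kurbatov99} and used as a black box. So there is no ``paper's own proof'' to compare against here; whatever argument you supply would be an addition, not a replacement.

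That said, your outline is only a strategy, not a proof, and it contains a real gap you yourself flag. The Banach-algebra structure on $\mathoo N_1$ and the small-norm Neumann case are fine and standard. The truncation step, however, has a circularity problem: to run the Neumann series for $(\mathbf1+N_c)^{-1}R$ inside $\widetilde{\mathoo N_1}$ you need $\lVert(\mathbf1+N_c)^{-1}\rVert_{\widetilde{\mathoo N_1}}\cdot\lVert R\rVert_{\mathoo N_1}<1$, but the $\widetilde{\mathoo N_1}$-norm of $(\mathbf1+N_c)^{-1}$ is exactly what you are trying to establish, and there is no a~priori reason it stays bounded as the truncation radius $M$ grows. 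You cannot choose $M$ to make $\lVert R\rVert_{\mathoo N_1}$ small without first knowing how the inverse norm of the truncated operator behaves in $M$.

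Your proposed fix---a localisation/partition-of-unity construction exploiting the compact $y$-support of $n_c$---is plausible in spirit, but as written it is a hope rather than an argument: you would need to show that local invertibility (on large cells) can be established from global $L_p$-invertibility, that the local inverses can be glued with errors controllable in $\mathoo N_1$-norm, and that the resulting bound is uniform in $M$. None of this is automatic; it is precisely the content of the cited theorem. If you want a self-contained proof you must either carry out that localisation rigorously (which is substantial) or consult \cite{Kurbatov99} directly.
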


 \begin{corollary}[{\rm\cite[Corollary~5.4.8]{Kurbatov99}}]\label{c:5.4.8}
Let $n\in\mathoo N_1$, and the operator $N$ be defined by~\eqref{e:operator N}. If the
operator $\mathbf1+N$ is invertible in $L_p$ for some $1\le p\le\infty$, then it is
invertible in $L_p$ for all $1\le p\le\infty$. Moreover, the kernel $m$ of the operator
$M$, where $(\mathbf1+N)^{-1}=\mathbf1+M$, does not depend on $p$.
 \end{corollary}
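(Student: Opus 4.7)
My strategy is to apply Theorem~\ref{t:5.4.7} once in the $L_p$ given by the hypothesis to extract an inverse kernel $m$, and then to show that this \emph{same} kernel $m$ produces $(\mathbf1+N)^{-1}$ in every $L_q$. By Theorem~\ref{t:5.4.7}, $(\mathbf1+N)^{-1}=\mathbf1+M$ with $M\in\mathoo N_1(L_p)$; let $m\in\mathoo N_1(\mathbb R^c,\mathbb E)$ be its kernel, dominated by some $\gamma\in\mathscr L_1$ as in~\eqref{e:est via beta}, and let $\beta\in\mathscr L_1$ dominate $n$. By Proposition~\ref{p:5.4.3}, this kernel $m$ automatically induces a bounded operator on every $L_q$, $1\le q\le\infty$, which I continue to denote by $M$. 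The task is then to verify $(\mathbf1+N)(\mathbf1+M)=(\mathbf1+M)(\mathbf1+N)=\mathbf1$ in every $L_q$.

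The heart of the argument is to upgrade the operator identity $N+M+NM=0$, which holds a priori only in $L_p$, into an a.e.\ kernel identity that is independent of the ambient function space. A Fubini computation in the style of the proof of Proposition~\ref{p:N_{b}} shows that $NM$ is the integral operator with kernel
\begin{equation*}
k(x,z)=\int_{\mathbb R^c}n(x,x-y)\,m(y,y-z)\,dy,
\end{equation*}
and the bound $\lVert k(x,z)\rVert\le(\beta*\gamma)(x-z)$ places $k$ again in $\mathoo N_1$. Hence $N+M+NM$ is itself an $\mathoo N_1$-operator, generated by some kernel $F\in\mathoo N_1$.

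I would then show that an $\mathoo N_1$-operator which vanishes on $L_p$ has kernel equal to zero almost everywhere. Testing against characteristic functions $\chi_B$ of bounded Borel sets $B$ (which lie in every $L_p$) and integrating against $\chi_A$ for bounded $A$ gives $\int_A\int_B F(x,x-y)\,dy\,dx=0$; since $F$ is locally integrable on $\mathbb R^c\times\mathbb R^c$, the Lebesgue differentiation theorem forces $F=0$ a.e. Once this is in hand, Proposition~\ref{p:n and n_1} says that $N+M+NM$ acts as the zero operator in every $L_q$, so $(\mathbf1+N)(\mathbf1+M)=\mathbf1$ there, and the opposite identity follows analogously. \textbf{The main obstacle} is precisely this passage from an operator identity in a single $L_p$ to a pointwise kernel identity, requiring both the Fubini analysis of the composition kernel and the Lebesgue-differentiation argument; everything else is a formal consequence of Propositions~\ref{p:5.4.3} and~\ref{p:n and n_1}.
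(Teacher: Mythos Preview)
The paper itself does not prove this corollary; it is quoted without proof from \cite[Corollary~5.4.8]{Kurbatov99}, so there is no in-paper argument to compare against. Your proposal is a correct and self-contained proof. The essential idea---that the operator identity $(\mathbf1+N)(\mathbf1+M)=\mathbf1$ in a single $L_p$ forces the $\mathoo N_1$-kernel of $N+M+NM$ to vanish almost everywhere, after which Propositions~\ref{p:5.4.3} and~\ref{p:n and n_1} transport the identity to every $L_q$---is exactly right, and your Fubini computation of the composition kernel together with the test-function/Lebesgue-differentiation step is the standard way to carry it out. One small remark: since $\mathbb E$ is finite-dimensional in this section, $\mathoo B(\mathbb E)$ is finite-dimensional as well, so the passage from ``$\int_{A\times B}F(x,x-y)\,dx\,dy=0$ for all bounded Borel $A,B$'' to ``$F=0$ a.e.'' reduces componentwise to the scalar case and presents no difficulty.
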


We denote by $\mathoo C\mathoo N_1=\mathoo C\mathoo N_1(\mathbb R^c,\mathbb E)$ the class
of kernels $n\in\mathoo N_1$ such that the function $n$ can be redefined on a set of
measure zero so that it becomes defined everywhere, estimate~\eqref{e:est via beta} holds
for all $x$ and $y$, and the associated function $x\mapsto\bar n(x)$ becomes
continuous in the norm of $L_1\bigl(\mathbb R^c,\mathoo B(\mathbb E)\bigr)$. Unless otherwise stated, we will always assume that such an override has already been performed. We note that in this case, integral~\eqref{e:operator N} exists for \emph{all} $x\in\mathbb R^c$ provided $u\in L_\infty$.

\begin{proposition}\label{p:tilde n=bar n}
For any $n\in\mathoo N_1(\mathbb R^c,\mathbb E)$ the functions $\bar n:\mathbb R^c\to L_1\bigl(\mathbb R^c,\mathoo B(\mathbb E)\bigr)$ and $\tilde n:\mathbb R^c\to L_1\bigl(\mathbb R^c,\mathoo B(\mathbb E)\bigr)$ are continuous in the norm of $L_1\bigl(\mathbb R^c,\mathoo B(\mathbb E)\bigr)$ simultaneously. Thus, a kernel $n\in\mathoo N_1$ belongs to $\mathoo C\mathoo N_1=\mathoo C\mathoo N_1(\mathbb R^c,\mathbb E)$ if and only if the kernel $n\in\mathoo N_1$ can be redefined on a set of measure zero so that it becomes defined everywhere, estimate~\eqref{e:est via beta} holds
for all $x$ and $y$, and the associated function $x\mapsto\tilde n(x)$ is
continuous in the norm of $L_1\bigl(\mathbb R^c,\mathoo B(\mathbb E)\bigr)$.
\end{proposition}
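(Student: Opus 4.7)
The plan is to exploit the pointwise identity $\bar n(x)(y)=\tilde n(x)(x-y)$, which is immediate from~\eqref{e:bar n} and~\eqref{e:tilde n}. Thus $\bar n(x)$ is obtained from $\tilde n(x)$ by a reflection of the variable followed by the shift $S_x$ acting on $L_1\bigl(\mathbb R^c,\mathoo B(\mathbb E)\bigr)$; since both operations are isometries of this space, in particular $\lVert\bar n(x)\rVert_{L_1}=\lVert\tilde n(x)\rVert_{L_1}$, and the question of continuity in $x$ reduces to controlling the interplay between the $x$-dependence of $\tilde n(x)$ itself and the $x$-dependence of the shift $S_x$.

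To pass from continuity of $\tilde n$ to continuity of $\bar n$, I would fix $x_0\in\mathbb R^c$, insert $n(x_0,x-y)$ as an intermediate value in the integrand defining $\lVert\bar n(x)-\bar n(x_0)\rVert_{L_1}$, and change variables $z=x-y$ in each of the two resulting integrals. This yields the estimate
\begin{equation*}
\lVert\bar n(x)-\bar n(x_0)\rVert_{L_1}\le\lVert\tilde n(x)-\tilde n(x_0)\rVert_{L_1}+\lVert\tilde n(x_0)-S_{x-x_0}\tilde n(x_0)\rVert_{L_1}.
\end{equation*}
As $x\to x_0$, the first term vanishes by hypothesis, and the second tends to zero by the standard fact that for every fixed $f\in L_1\bigl(\mathbb R^c,\mathoo B(\mathbb E)\bigr)$ the mapping $h\mapsto S_hf$ is continuous in norm.

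The converse implication is completely symmetric: starting from $\tilde n(x)(y)-\tilde n(x_0)(y)=\bar n(x)(x-y)-\bar n(x_0)(x_0-y)$ and inserting the intermediate value $\bar n(x_0)(x-y)$, one obtains, after substituting $w=x-y$ in the first resulting integral and $w=x_0-y$ in the second,
\begin{equation*}
\lVert\tilde n(x)-\tilde n(x_0)\rVert_{L_1}\le\lVert\bar n(x)-\bar n(x_0)\rVert_{L_1}+\lVert S_{-(x-x_0)}\bar n(x_0)-\bar n(x_0)\rVert_{L_1},
\end{equation*}
and both terms vanish as $x\to x_0$ by the same two principles. The second assertion of the proposition is then a formal consequence, since the two parametrizations represent the same underlying kernel $n(x,y)$ and the pointwise bound~\eqref{e:est via beta} is unaffected by the switch. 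The only genuine analytic input throughout is the strong continuity of translation on $L_1\bigl(\mathbb R^c,\mathoo B(\mathbb E)\bigr)$; the rest is elementary substitution and the triangle inequality, so I do not anticipate any real obstacle.
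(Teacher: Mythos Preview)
Your argument is correct and is precisely the approach the paper takes: its entire proof reads ``follows from the fact that for any $g\in L_1$ the family $S_hg$ depends on $h$ continuously in the norm of $L_1$,'' and you have simply written out the triangle-inequality decomposition that makes this remark into a proof. There are no gaps (the minor sign ambiguity in which shift appears in the second term is immaterial, since $\lVert S_hf-f\rVert_{L_1}=\lVert f-S_{-h}f\rVert_{L_1}$).
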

\begin{proof}
The proof follows from the fact that for any function $g\in L_1\bigl(\mathbb R^c,\mathoo B(\mathbb E)\bigr)$ the family $S_hg$, where $\bigl(S_hg\bigr)(x)=g(x-h)$, depends on $h\in\mathbb R^c$ continuously in the norm of~$L_1$.
\end{proof}

We denote by $\mathoo C\mathoo N_1(L_p)$, $1\le p\le\infty$, the set of all operators $N\in\mathoo B(L_p)$ induced by kernels $n\in\mathoo C\mathoo N_1$ in accordance with formula~\eqref{e:operator N}.

 \begin{theorem}[{\rm\cite[Theorem~5.3]{Kurbatov-Kuznetsova16}}]\label{t:fin}
The subalgebra $\widetilde{\mathoo C\mathoo N_1}(L_p)$, $1\le p\le\infty$, is full in the algebra $\mathoo B(L_p)$, i.~e., if the operator $\mathbf1+N$, where $N\in\mathoo C\mathoo N_1(L_p)$, is invertible, then $(\mathbf1+N)^{-1}=\mathbf1+M$, where $M\in\mathoo C\mathoo N_1(L_p)$.
 \end{theorem}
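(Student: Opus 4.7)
My approach would be to reduce the problem to showing that $\mathoo C\mathoo N_1$ is closed under composition by an arbitrary element of $\mathoo N_1$ from the right, and then exploit a Neumann-type identity for the inverse. Concretely, by the weaker Theorem~\ref{t:5.4.7} applied to $N\in\mathoo N_1(L_p)\supseteq\mathoo C\mathoo N_1(L_p)$, I already obtain $(\mathbf1+N)^{-1}=\mathbf1+M$ with $M\in\mathoo N_1(L_p)$ induced by some kernel $m\in\mathoo N_1$ (which is $p$-independent by Corollary~\ref{c:5.4.8}). Multiplying out $(\mathbf1+N)(\mathbf1+M)=\mathbf1$ yields $M=-N-NM$. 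Since $\mathoo C\mathoo N_1$ is evidently a vector space (a sum of kernels with continuous $\bar{\cdot}$-transforms and pointwise $L_1$-dominants again has both properties), the whole problem reduces to proving $NM\in\mathoo C\mathoo N_1$.

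For this, I would compute the kernel of $NM$ via Fubini's theorem, justified by the pointwise domination $\|n(x,x-y)\,m(y,y-z)\|\le\beta_N(x-y)\,\beta_M(y-z)$ and integrability of the convolution $\beta_N*\beta_M$. This gives
\begin{equation*}
\bigl(NMu\bigr)(x)=\int_{\mathbb R^c}K(x,z)\,u(z)\,dz,\qquad K(x,z)=\int_{\mathbb R^c}\bar n(x)(y)\,\bar m(y)(z)\,dy,
\end{equation*}
together with the pointwise bound $\|K(x,z)\|\le(\beta_N*\beta_M)(x-z)$, which already puts the associated kernel into $\mathoo N_1$. The decisive step is the continuity estimate
\begin{equation*}
\bigl\|\overline{NM}(x)-\overline{NM}(x')\bigr\|_{L_1}\le\|m\|_{\mathoo N_b}\,\bigl\|\bar n(x)-\bar n(x')\bigr\|_{L_1},
\end{equation*}
obtained by bringing the difference $\bar n(x)(y)-\bar n(x')(y)$ under the outer integral and using that $\|\bar m(y)\|_{L_1}\le\|m\|_{\mathoo N_b}$ for almost every $y$. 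Since $\bar n$ is continuous in $L_1$ by hypothesis (via Proposition~\ref{p:tilde n=bar n}), so is $\overline{NM}$, and hence $NM\in\mathoo C\mathoo N_1$. Finally, Proposition~\ref{p:n and n_1} identifies the Theorem~\ref{t:5.4.7} kernel $m$ with the explicit expression $-n-K$ almost everywhere, and this representative manifestly belongs to $\mathoo C\mathoo N_1$.

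The main obstacle I anticipate is the measure-theoretic book-keeping hidden inside the phrase ``$m\in\mathoo C\mathoo N_1$''. That definition demands a \emph{single} everywhere-defined representative, pointwise dominated by an $L_1$-function, with $L_1$-continuous $\bar{\cdot}$-transform; none of this is automatic from the operator-level identity alone. I would need to fix good representatives of $n$ and $m$ at the outset, verify that the Bochner integral defining $K(x,\cdot)\in L_1$ converges for \emph{every} $x$ rather than merely almost every $x$, and then match the resulting $-n-K$ to the a.e.-equivalence class produced by Theorem~\ref{t:5.4.7}. It is also worth remarking that the argument is inherently asymmetric: the companion identity $M=-N-MN$ would demand continuity of $\bar m$, which is the very conclusion sought, so one must work with $M=-N-NM$ and leverage the prior regularity of $\bar n$.
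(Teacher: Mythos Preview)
The paper does not actually prove Theorem~\ref{t:fin}; it is quoted verbatim from \cite[Theorem~5.3]{Kurbatov-Kuznetsova16} and used as a black box. So there is no ``paper's own proof'' to compare against here.

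That said, your argument is a self-contained and essentially correct route to the result. The reduction $M=-N-NM$ together with the one-sided absorption property (if $N\in\mathoo C\mathoo N_1$ and $M\in\mathoo N_1$ then $NM\in\mathoo C\mathoo N_1$) is exactly the right idea, and your Lipschitz estimate
\[
\bigl\|\overline{NM}(x)-\overline{NM}(x')\bigr\|_{L_1}\le\lVert m\rVert_{\mathoo N_b}\,\bigl\|\bar n(x)-\bar n(x')\bigr\|_{L_1}
\]
is the crux and is correctly derived. The asymmetry you point out (that $M=-N-MN$ would be circular) is a genuine and important observation. The caveats you list in the last paragraph---fixing good representatives, getting $K(x,\cdot)$ defined for every $x$ rather than almost every $x$, and matching with the a.e.-class coming from Theorem~\ref{t:5.4.7}---are real but routine: because $n\in\mathoo C\mathoo N_1$ already comes with an everywhere-defined representative satisfying $\lvert n(x,y)\rvert\le\beta_N(y)$ for all $(x,y)$, the inner integral defining $K(x,\cdot)$ is dominated by $\beta_N(x-\cdot)*\lVert m\rVert_{\mathoo N_b}$ and hence converges in $L_1$ for every $x$, and Proposition~\ref{p:n and n_1} does the final identification. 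None of this requires a new idea.
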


\section{Integral operators with almost periodic kernels}\label{s:N_1,AP}

We denote by $\mathoo C\mathoo N_{1,\,AP}=\mathoo C\mathoo N_{1,\,AP}(\mathbb R^c,\mathbb E)$ the class of kernels $n\in\mathoo C\mathoo N_1$ such that the function $n$ can be redefined on a set of
measure zero so that it becomes defined everywhere, estimate~\eqref{e:est via beta} holds
for all $x$ and $y$, and the associated function $x\mapsto\tilde n(x)$ becomes
almost periodic in the norm of $L_1\bigl(\mathbb R^c,\mathoo B(\mathbb E)\bigr)$, i.~e., $\tilde n\in C_{AP}\bigl(\mathbb R^c,L_1\bigl(\mathbb R^c,\mathoo B(\mathbb E)\bigr)\bigr)$.

\begin{proposition}\label{p:5.1.9}
For any $n\in\mathoo C\mathoo N_{1,\,AP}(\mathbb R^c,\mathbb E)$, the continuous function $\tilde n:\,\mathbb R^c\to L_1\bigl(\mathbb R^c,\mathoo B(\mathbb E)\bigr)$ possesses a unique extension to a function $\tilde n\in C\bigl(\mathbb R_b^c,L_1\bigl(\mathbb R^c,\mathoo B(\mathbb E)\bigr)\bigr)$.
\end{proposition}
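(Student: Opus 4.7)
The plan is to reduce this proposition directly to Theorem~\ref{t:5.1.9}, applied with the target Banach space $X = L_1\bigl(\mathbb R^c,\mathoo B(\mathbb E)\bigr)$. By the definition of the class $\mathoo C\mathoo N_{1,\,AP}(\mathbb R^c,\mathbb E)$, the associated function $\tilde n:\,\mathbb R^c\to L_1\bigl(\mathbb R^c,\mathoo B(\mathbb E)\bigr)$ is almost periodic in the norm of $L_1\bigl(\mathbb R^c,\mathoo B(\mathbb E)\bigr)$, i.e., $\tilde n\in C_{AP}\bigl(\mathbb R^c,X\bigr)$. This is precisely condition~(a) of Theorem~\ref{t:5.1.9}.

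Next, I would invoke the implication (a)$\Rightarrow$(c) of Theorem~\ref{t:5.1.9}, which yields an extension of $\tilde n$ to a continuous function $\tilde n\in C\bigl(\mathbb R_b^c,X\bigr)$. This is exactly the extension required in the statement.

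For the uniqueness claim, I would appeal to Proposition~\ref{p:Kroneker}: the subgroup $\mathbb R^c\simeq\mathbb X(\mathbb R^c)$ is dense in $\mathbb X_b(\mathbb R^c)=\mathbb R^c_b$. Hence any two continuous extensions of $\tilde n$ to $\mathbb R_b^c$ coincide on a dense subset of $\mathbb R_b^c$ and therefore coincide everywhere, since they take values in the Hausdorff space $X$. (This uniqueness is already recorded in part~(c) of Theorem~\ref{t:5.1.9}.)

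There is no substantive obstacle here: the proposition is essentially a restatement of Theorem~\ref{t:5.1.9} specialized to the Banach space $X = L_1\bigl(\mathbb R^c,\mathoo B(\mathbb E)\bigr)$, once one observes that the definition of $\mathoo C\mathoo N_{1,\,AP}$ places $\tilde n$ into $C_{AP}(\mathbb R^c,X)$ by construction.
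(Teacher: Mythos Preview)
Your proposal is correct and takes exactly the same approach as the paper: the paper's proof consists of the single line ``The proof follows from Proposition~\ref{t:5.1.9}'' (referring to Theorem~\ref{t:5.1.9}), which is precisely the reduction you spell out in detail.
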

\begin{proof}
The proof follows from Proposition~\ref{t:5.1.9}.
\end{proof}

We denote by $\mathoo C\mathoo N_{1,\,AP}\bigl(L_p\bigr)$, $1\le p\le\infty$, the set of all operators $N\in\mathoo B(L_p)$ induced by kernels $n\in\mathoo C\mathoo N_{1,\,AP}$.

\begin{proposition}\label{p:N_{1,AP}}
The set $\mathoo C\mathoo N_{1,\,AP}\bigl(L_p\bigr)$ is a subalgebra of the algebra $\mathoo B(L_p)$, $1\le p\le\infty$.
\end{proposition}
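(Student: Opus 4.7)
\medskip
\noindent\textbf{Proof plan.}
Closure of $\mathoo C\mathoo N_{1,AP}(L_p)$ under addition and scalar multiplication is immediate: if $n_1,n_2$ are dominated by $\beta_1,\beta_2\in\mathscr L_1$, then $n_1+n_2$ and $\lambda n_1$ are dominated by $\beta_1+\beta_2$ and $|\lambda|\beta_1$, while $\tilde n_1+\tilde n_2$ and $\lambda\tilde n_1$ stay in the linear subspace $C_{AP}(\mathbb R^c,L_1)\subseteq C(\mathbb R^c,L_1)$.

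For closure under composition, let $N,M$ be induced by $n,m\in\mathoo C\mathoo N_{1,AP}$. Since $\mathoo C\mathoo N_1\subseteq\mathoo N_1\subseteq\mathoo N_b$, the Fubini-based calculation in the proof of Proposition~\ref{p:N_{b}} shows that $NM$ is induced by $k(x,z)=\int n(x,x-y)m(y,y-z)\,dy$; the substitution $y=x-t$ rewrites this as
\begin{equation*}
r(x,u)=\int_{\mathbb R^c} \tilde n(x)(t)\,\tilde m(x-t)(u-t)\,dt,\qquad r(x,x-z)=k(x,z).
\end{equation*}
Pointwise domination gives $\|r(x,u)\|\le(\beta_n*\beta_m)(u)\in\mathscr L_1$, so $r\in\mathoo N_1$. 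Continuity of $\tilde r\in C(\mathbb R^c,L_1)$ follows by splitting $\tilde r(x)-\tilde r(x')$ into two integrals isolating the increments in $\tilde n$ and $\tilde m$ and applying dominated convergence with dominator $\beta_n(t)\cdot 2\|\beta_m\|_{L_1}$; thus $r\in\mathoo C\mathoo N_1$.

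The key step is almost periodicity of $\tilde r$. I would use the Bochner characterization: given any sequence $(h_k)\subset\mathbb R^c$, a diagonal argument yields a subsequence along which $S_{h_k}\tilde n\to f$ and $S_{h_k}\tilde m\to g$ uniformly in $C(\mathbb R^c,L_1)$. Setting $R(x)(u):=\int f(x)(t)\,g(x-t)(u-t)\,dt$, the identity $ab-cd=(a-c)b+c(b-d)$ applied pointwise to the integrand, followed by taking $L_1(u)$-norms and using the uniform bounds $\|(S_{h_k}\tilde m)(y)\|_{L_1}\le\|\beta_m\|_{L_1}$ and $\|f(y)\|_{L_1}\le\|\beta_n\|_{L_1}$, gives
\begin{equation*}
\|(S_{h_k}\tilde r)(x)-R(x)\|_{L_1}\le\|\beta_m\|_{L_1}\,\|S_{h_k}\tilde n-f\|_\infty+\|\beta_n\|_{L_1}\,\|S_{h_k}\tilde m-g\|_\infty,
\end{equation*}
where $\|\cdot\|_\infty$ denotes the norm of $C(\mathbb R^c,L_1)$. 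Both terms tend to $0$, so $S_{h_k}\tilde r\to R$ uniformly, proving $\tilde r\in C_{AP}(\mathbb R^c,L_1)$ and hence $r\in\mathoo C\mathoo N_{1,AP}$.

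The main obstacle I anticipate is the careful book-keeping of the $\mathoo B(\mathbb E)$-valued factors through the Bochner integrals to ensure the estimates remain uniform in $x$; but once $ab-cd$ decouples the two perturbations, each piece reduces to a standard $L_1$-convolution bound against a uniformly small $C(\mathbb R^c,L_1)$-distance.
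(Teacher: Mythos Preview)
Your proof is correct but follows a genuinely different route from the paper's.

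The paper does not compute with the product kernel at all. Instead it passes to $p=\infty$ (the kernel does not depend on $p$), invokes Corollary~\ref{c:N_b iso} to say that the map $U:\tilde n\mapsto N$ is a topological isomorphism from $L_\infty\bigl(\mathbb R^c,L_1\bigr)$ onto $\mathoo N_b(L_\infty)$ intertwining $S_h$ with conjugation by $S_h$, and then observes that almost periodicity of $N$ and $M$ as elements of $\mathoo B(L_\infty)$ forces $NM$ to be almost periodic by Theorem~\ref{t:6.5.2}; transporting back through $U$ gives almost periodicity of $\tilde k$. The key ingredient here is the two-sided norm estimate~\eqref{e:norm of N_b}, which makes $U$ a topological isomorphism rather than merely a bounded bijection.

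Your approach is more hands-on: you write out the product kernel explicitly as $\tilde r(x)(u)=\int\tilde n(x)(t)\,\tilde m(x-t)(u-t)\,dt$ and verify the $\mathoo N_1$, continuity, and almost-periodicity conditions directly via the $ab-cd$ splitting and Bochner's criterion. This is longer but entirely self-contained; in particular you never need the lower bound in~\eqref{e:norm of N_b} (whose proof the paper outsources to~\cite{Kurbatov-Kuznetsova16}), and your argument simultaneously reproves that $\mathoo C\mathoo N_1$ is closed under products rather than importing it. The paper's argument, by contrast, is shorter once the isomorphism $U$ is available and makes transparent why the operator-level and kernel-level notions of almost periodicity coincide.
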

\begin{proof}
The closedness with respect to addition and scalar multiplication is evident. We prove the closedness with respect to multiplication.
Let $M,N\in\mathoo C\mathoo N_{1,\,AP}\bigl(L_p\bigr)$. From Theorem~\ref{t:5.4.7} it follows that $M,N\in\mathoo C\mathoo N_{1}\bigl(L_p\bigr)$. It remains to prove that the operator $K=NM$ is induced by an almost periodic function $x\mapsto\tilde k(x)$.

Let $p=\infty$. By Corollary~\ref{c:N_b iso}, the operators $N,M\in\mathoo B(L_\infty)$ are almost periodic. Consequently, by Theorem~\ref{t:6.5.2}, the operator $K=NM\in\mathoo B(L_\infty)$ is almost periodic as well. Therefore, again by Corollary~\ref{c:N_b iso}, the associated function $\tilde k$ is almost periodic.
\end{proof}

 \begin{theorem}\label{t:main}
The subalgebra $\widetilde{\mathoo C\mathoo N}_{1,\,AP}(L_p)$, $1\le p\le\infty$, is full in the algebra $\mathoo B(L_p)$, i.~e., if the operator $\mathbf1+N$, where $N\in\mathoo C\mathoo N_{1,\,AP}(L_p)$, is invertible, then $(\mathbf1+N)^{-1}=\mathbf1+M$, where $M\in\mathoo C\mathoo N_{1,\,AP}(L_p)$.
 \end{theorem}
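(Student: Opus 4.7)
The plan is to combine the fullness of $\widetilde{\mathoo C\mathoo N_1}(L_p)$ from Theorem~\ref{t:fin} with the fullness of $\mathoo B_{AP}(L_\infty)$ from Theorem~\ref{t:6.5.2}, using Corollaries~\ref{c:5.4.8} and~\ref{c:N_b iso} as the bridge that translates almost periodicity of the kernel into almost periodicity of the operator (and vice versa) once we move to $L_\infty$. The point is that $\mathoo C\mathoo N_{1,\,AP}$ contains two pieces of information, continuity of $\tilde n$ in the $L_1$-norm and almost periodicity of $\tilde n$, and each of these pieces already has a ``full subalgebra'' theorem behind it; what remains is to realize the inverse on both sides simultaneously.

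First, since $N\in\mathoo C\mathoo N_{1,\,AP}(L_p)\subset\mathoo C\mathoo N_1(L_p)$ and $\mathbf1+N$ is invertible in $\mathoo B(L_p)$, Theorem~\ref{t:fin} immediately gives $(\mathbf1+N)^{-1}=\mathbf1+M$ with $M\in\mathoo C\mathoo N_1(L_p)$ induced by a kernel $m\in\mathoo C\mathoo N_1$; in particular $\tilde m$ is already continuous in the $L_1$-norm, and the estimate~\eqref{e:est via beta} holds for $m$. By Corollary~\ref{c:5.4.8}, the kernel $m$ is independent of $p$, so the equality $(\mathbf1+N)^{-1}=\mathbf1+M$ holds equally well in $\mathoo B(L_\infty)$. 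Next, I pass to the $L_\infty$ picture: because $\tilde n$ is almost periodic in $L_1$, Corollary~\ref{c:N_b iso} says $N$ is an almost periodic operator in $\mathoo B(L_\infty)$, hence $\mathbf1+N\in\mathoo B_{AP}(L_\infty)$. This operator is invertible in $\mathoo B(L_\infty)$ by what has just been said, so Theorem~\ref{t:6.5.2} (fullness of $\mathoo B_{AP}(L_\infty)$) yields $\mathbf1+M\in\mathoo B_{AP}(L_\infty)$, i.~e.\ $M$ is almost periodic in $\mathoo B(L_\infty)$. A second invocation of Corollary~\ref{c:N_b iso}, now applied to $M$, produces the conclusion that the associated kernel function $\tilde m$ is almost periodic as an element of $L_\infty(\mathbb R^c,L_1(\mathbb R^c,\mathoo B(\mathbb E)))$. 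Combined with the $L_1$-continuity of $\tilde m$ already obtained, this gives $\tilde m\in C_{AP}(\mathbb R^c,L_1(\mathbb R^c,\mathoo B(\mathbb E)))$, so $m\in\mathoo C\mathoo N_{1,\,AP}$ and $M\in\mathoo C\mathoo N_{1,\,AP}(L_p)$, as required.

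The one spot I expect to need to argue, rather than merely cite, is the final bookkeeping step: matching the ``almost periodic in $L_\infty(\mathbb R^c,L_1)$'' output of Corollary~\ref{c:N_b iso} against the ``almost periodic in $C(\mathbb R^c,L_1)$'' input demanded by the definition of $\mathoo C\mathoo N_{1,\,AP}$. Because $\tilde m$ is already a continuous bounded $L_1$-valued function, its $L_\infty$-norm agrees with its $C$-norm, so relative compactness of the orbit $\{\,S_h\tilde m:\,h\in\mathbb R^c\,\}$ in the one sense is the same as relative compactness in the other; I would record this equivalence explicitly, perhaps as a short lemma, to avoid ambiguity. Apart from this translation, no new analytic work is needed: Theorem~\ref{t:main} is essentially an algebraic assembly of Theorem~\ref{t:fin} (which delivers the kernel $m$ and its $L_1$-continuity) and Theorem~\ref{t:6.5.2} (which delivers the almost periodicity of $M$ in $L_\infty$), linked through the kernel/operator dictionary of Corollary~\ref{c:N_b iso}.
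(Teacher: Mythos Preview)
Your proposal is correct and follows essentially the same route as the paper: apply Theorem~\ref{t:fin} to obtain $m\in\mathoo C\mathoo N_1$, pass to $L_\infty$ via Corollary~\ref{c:5.4.8}, invoke Theorem~\ref{t:6.5.2} to get that $M$ is almost periodic in $\mathoo B(L_\infty)$, and then transfer this back to almost periodicity of $\tilde m$. The only cosmetic difference is that the paper carries out the last step by an explicit total-boundedness argument using the norm inequality of Proposition~\ref{p:N_b operator}, whereas you cite Corollary~\ref{c:N_b iso} and then reconcile the $L_\infty$ and $C$ pictures; your remark that the isometric embedding $C\hookrightarrow L_\infty$ makes relative compactness of the orbit equivalent in both norms is exactly the point needed, and it is sound.
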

\begin{proof}
Since $\mathoo C\mathoo N_{1,\,AP}\subseteq\mathoo C\mathoo N_1$, by Theorem~\ref{t:fin} the operator $(\mathbf1+N)^{-1}$ can be represented in the form $\mathbf1+M$, where
\begin{equation*}
\bigl(Mu\bigr)(x)=\int_{\mathbb R^c}m(x,x-y)\,u(y)\,dy
\end{equation*}
and $m\in\mathoo C\mathoo N_1$. By the definition of the class $\mathoo C\mathoo N_1$, we can assume without loss of generality that $m$ is defined everywhere, estimate of the kind~\eqref{e:est via beta} holds
for all $x$ and $y$, and the associated function $x\mapsto\bar m(x)$ is
continuous in the norm of $L_1\bigl(\mathbb R^c,\mathoo B(\mathbb E)\bigr)$.

Since $\mathoo C\mathoo N_1\subseteq\mathoo N_1$, by Corollary~\ref{c:5.4.8} the operator $\mathbf1+N$ is invertible in $L_p$ for all $p$ and the inverse operator is defined by the formula $\mathbf1+M$, where the operator $M$ is generated by the same kernel $m\in\mathoo N_1$ for all $p$ as well. So, we consider the operators $\mathbf1+N$ and $\mathbf1+M$ as acting in $L_\infty$.

We recall~\cite[I.6.14]{Dunford-Schwartz-I:eng} that a subset $K$ of a Banach space is called \emph{totally bounded} if for every $\varepsilon>0$ it is possible to cover $K$ by a finite number of balls $B(k_i,\varepsilon)=\{\,x\in K:\,\lVert x-k_i\rVert<\varepsilon\,\}$, $i=1,\dots,n$, with centers $k_i\in K$. It is well known~\cite[I.6.15]{Dunford-Schwartz-I:eng} that a subset of a Banach space is relatively compact if and only if it is totally bounded.

By Theorem~\ref{t:6.5.2}, the operator $M:\,L_\infty\to L_\infty$ is almost periodic, i.~e., the family
\begin{equation*}
M\{h\}=S_hMS_{-h},\qquad h\in\mathbb R^c,
\end{equation*}
is relatively compact or, equivalently, totally bounded. Let, for a given $\varepsilon>0$, the set of balls
\begin{equation*}
B(M\{h_i\},\varepsilon)=\{\,M\{h\}:\,\lVert M\{h\}-M\{h_i\}\rVert<\varepsilon\,\},\qquad i=1,\dots,n,
\end{equation*}
forms a finite covering of the set $M\{h\}$, $h\in\mathbb R^c$.

By Proposition~\ref{p:ShNS-h:infty}, the operator $M\{h\}=S_hMS_{-h}$ is induced by the function $S_h\tilde m$. It follows from Proposition~\ref{p:N_b operator} that the balls
\begin{equation*}
B(S_{h_i}\tilde m,C\varepsilon)=\{\,S_{h}\tilde m:\,\lVert S_{h}\tilde m-S_{h_i}\tilde m\rVert_{L_1}<C\varepsilon\,\},\qquad i=1,\dots,n,
\end{equation*}
(where $C=1/c$) form a finite covering of the set $\{\,S_h\tilde m:\,h\in\mathbb R^c\,\}$.
Indeed,
\begin{align*}
\lVert S_{h}\tilde m-S_{h_i}\tilde m\rVert_{C(\mathbb R^c,L_1(\mathbb R^c,\mathoo B(\mathbb E)))}
&=\esssup_{x\in\mathbb R^c}\Vert\tilde m(x-h)-\tilde m(x-h_i)\Vert_{L_1}\\
&\le C\lVert S_hMS_{-h}-S_{h_i}MS_{-h_i}:\,L_\infty\to L_\infty\rVert\\
&=C\lVert M\{h\}-M\{h_i\}:\,L_\infty\to L_\infty\rVert.
\end{align*}
Since $C$ is a constant, the set $S_h\tilde m$, $h\in\mathbb R^c$, is totally bounded and, consequently, relatively compact. Thus, $m\in\mathoo C\mathoo N_{1,\,AP}$.
\end{proof}

\section{Almost periodic integral operators\\ with absolutely convergent Fourier series}\label{s:APWi}
We denote by $\mathoo C\mathoo N_{1,\,APW}=\mathoo C\mathoo N_{1,\,APW}(\mathbb R^c,\mathbb E)$ the class of kernels $n:\,\mathbb R^c\times\mathbb R^c\to\mathoo B(\mathbb E)$ that can be represented in the form
\begin{equation}\label{e:APW kernel}
n(x,y)=\sum_{k=1}^\infty e^{i\langle\omega_k,x\rangle}n_k(y),
\end{equation}
where $\omega_k\in\mathbb R^c$ and $n_k\in L_1\bigl(\mathbb R^c,\mathoo B(\mathbb E)\bigr)$, with
\begin{equation*}
\sum_{k=1}^\infty \lVert n_k\rVert_{L_1}<\infty.
\end{equation*}
Clearly,
\begin{equation*}
\mathoo C\mathoo N_{1,\,APW}\subset\mathoo C\mathoo N_{1,\,AP}\subset\mathoo C\mathoo N_1.
\end{equation*}
We note that for kernel~\eqref{e:APW kernel} we have
\begin{align*}
\tilde n(x)&=\sum_{k=1}^\infty e^{i\langle\omega_k,x\rangle}n_k,\\
\bigl(Nu\bigr)(x)&=\sum_{k=1}^\infty e^{i\langle\omega_k,x\rangle}\int_{\mathbb R^c}n_k(x-y)\,u(y)\,dy,\\
\bigl(N\{h\}u\bigr)(x)=\bigl(S_hNS_{-h}u\bigr)(x)&=\sum_{k=1}^\infty e^{i\langle\omega_k,x-h\rangle}\int_{\mathbb R^c}n_k(x-y)\,u(y)\,dy.
\end{align*}

We denote by $\mathoo C\mathoo N_{1,\,APW}\bigl(L_p\bigr)$, $1\le p\le\infty$, the set of all operators $N\in\mathoo B(L_p)$ induced by kernels $n\in\mathoo C\mathoo N_{1,\,APW}$. We say that an operator $N\in\mathoo C\mathoo N_{1,\,APW}\bigl(L_p\bigr)$ \emph{has an almost periodic kernel with an absolutely convergent Fourier series}.

For any $g\in L_1\bigl(\mathbb R^c,\mathoo B(\mathbb E)\bigr)$, we define the \emph{convolution operator}
\begin{equation}\label{e:G_g}
\bigl(G_gu\bigr)(x)=\int_{\mathbb R^c}g(x-y)u(y)\,dy.
\end{equation}
With this notation the operator $N$ induced by kernel~\eqref{e:APW kernel} can be represented in the form
\begin{equation}\label{e:N=sum Psi G}
N=\sum_{k=1}^\infty\Psi_{\omega_k}G_{n_k}.
\end{equation}
For brevity, instead of~\eqref{e:N=sum Psi G} we will use the representation (cf.~\eqref{e:N=sum Psi A'})
\begin{equation}\label{e:N=sum Psi G'}
N=\sum_{\omega\in\mathbb R^c}\Psi_{\omega}G_{\omega}
\end{equation}
for operators $N\in\mathoo C\mathoo N_{1,\,APW}\bigl(L_p\bigr)$, where $G_\omega$ are operators of convolution with functions $g_\omega\in L_1\bigl(\mathbb R^c,\mathoo B(\mathbb E)\bigr)$ and $\sum_{\omega\in\mathbb R^c}\lVert g_{\omega}\rVert_{L_1}<\infty$. We assume that only a countable number of operators $G_{\omega}$ in~\eqref{e:N=sum Psi G'} are nonzero. Clearly, $\mathoo C\mathoo N_{1,\,APW}\bigl(L_p\bigr)\subset\mathoo B_{APW}(L_p)$.
In particular, we have (see Proposition~\ref{p:N(H)=})
\begin{equation*}
N\{h\}=\sum_{\omega\in\mathbb R^c} e^{-i\langle\omega,h\rangle}\Psi_{\omega}G_{\omega},\qquad h\in\mathbb R^c.
\end{equation*}

\begin{proposition}\label{p:N_{1,APW}}
The set $\mathoo C\mathoo N_{1,\,APW}\bigl(L_p\bigr)$ is a subalgebra of the algebra $\mathoo B(L_p)$, $1\le p\le\infty$.
\end{proposition}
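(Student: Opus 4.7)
The plan is to verify closure under each algebraic operation directly from representation~\eqref{e:N=sum Psi G'}. Closure under addition and scalar multiplication is immediate: if $N=\sum_\omega\Psi_\omega G_\omega$ and $M=\sum_\omega\Psi_\omega H_\omega$ lie in $\mathoo C\mathoo N_{1,\,APW}(L_p)$, with $G_\omega,H_\omega$ convolutions against $g_\omega,h_\omega\in L_1\bigl(\mathbb R^c,\mathoo B(\mathbb E)\bigr)$, then $\lambda N+M=\sum_\omega\Psi_\omega(\lambda G_\omega+H_\omega)$ has the same form: each $\lambda G_\omega+H_\omega$ is convolution against $\lambda g_\omega+h_\omega\in L_1$, and the $L_1$-norms of these kernels are absolutely summable by the triangle inequality.

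The substantive step is closure under multiplication. The key computational ingredient is the commutation identity: for any $\nu\in\mathbb R^c$ and any convolution operator $G$ against $g\in L_1\bigl(\mathbb R^c,\mathoo B(\mathbb E)\bigr)$,
\begin{equation*}
G\Psi_\nu = \Psi_\nu G',
\end{equation*}
where $G'$ is the convolution operator against $z\mapsto e^{-i\langle\nu,z\rangle}g(z)\in L_1$. This is verified by the substitution $z=x-y$ inside~\eqref{e:G_g}, and in particular the $L_1$-norm of the new kernel equals $\lVert g\rVert_{L_1}$. Applying this term-by-term in the product of $N$ and $M$ yields
\begin{equation*}
NM=\sum_{\mu\in\mathbb R^c}\sum_{\nu\in\mathbb R^c}\Psi_\mu G_\mu\Psi_\nu H_\nu=\sum_{\mu,\nu}\Psi_{\mu+\nu}\bigl(G'_{\mu,\nu}H_\nu\bigr),
\end{equation*}
and $G'_{\mu,\nu}H_\nu$ is the composition of two $L_1$-convolutions, hence convolution against an $L_1$-function whose norm is bounded by $\lVert g_\mu\rVert_{L_1}\lVert h_\nu\rVert_{L_1}$.

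Re-indexing by $\omega=\mu+\nu$ regroups this as $NM=\sum_\omega\Psi_\omega K_\omega$, where each $K_\omega=\sum_\nu G'_{\omega-\nu,\nu}H_\nu$ is convolution against an $L_1$-function $k_\omega$, and the Cauchy-product estimate
\begin{equation*}
\sum_{\omega\in\mathbb R^c}\lVert k_\omega\rVert_{L_1}\le\sum_\omega\sum_\nu\lVert g_{\omega-\nu}\rVert_{L_1}\lVert h_\nu\rVert_{L_1}=\Bigl(\sum_\mu\lVert g_\mu\rVert_{L_1}\Bigr)\Bigl(\sum_\nu\lVert h_\nu\rVert_{L_1}\Bigr)<\infty
\end{equation*}
exhibits $NM$ as an element of $\mathoo C\mathoo N_{1,\,APW}(L_p)$. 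The one step requiring care is the rearrangement of the $(\mu,\nu)$-indexed double sum into the $\omega$-indexed single sum, but this is justified by the absolute summability just displayed, together with $\lVert\Psi_\omega\rVert=1$ and $\lVert G\rVert_{\mathoo B(L_p)}\le\lVert g\rVert_{L_1}$, which make convergence in the operator norm of $\mathoo B(L_p)$ automatic.
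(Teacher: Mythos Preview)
Your proof is correct and follows essentially the same approach as the paper: both rely on the commutation identity $G_\mu\Psi_\nu=\Psi_\nu G'$ with $G'$ the convolution against $z\mapsto e^{-i\langle\nu,z\rangle}g_\mu(z)$, which preserves the $L_1$-norm of the kernel. Your version is in fact more thorough, since you explicitly regroup by $\omega=\mu+\nu$, verify the Cauchy-product summability estimate, and justify the rearrangement of the double sum, whereas the paper leaves these details implicit.
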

\begin{proof}
The closedness with respect to addition and scalar multiplication is evident. We prove the closedness with respect to multiplication. Let operators $N,M\in\mathoo B(L_p)$ be represented in the form
\begin{equation*}
N=\sum_{\omega\in\mathbb R^c}\Psi_{\omega}G_{\omega}\qquad\text{and}\qquad
M=\sum_{\nu\in\mathbb R^c}\Psi_{\nu}H_{\nu},
\end{equation*}
where $G_\omega$ and $H_\nu$ are operators of convolution with functions $g_\omega,h_\nu\in L_1\bigl(\mathbb R^c,\mathoo B(\mathbb E)\bigr)$ respectively, with $\sum_{\omega\in\mathbb R^c}\lVert g_{\omega}\rVert_{L_1}<\infty$ and $\sum_{\nu\in\mathbb R^c}\lVert h_{\nu}\rVert_{L_1}<\infty$. We have
\begin{equation*}
NM=\sum_{\omega\in\mathbb R^c}\sum_{\nu\in\mathbb R^c}\Psi_{\omega}G_{\omega}\Psi_{\nu}H_{\nu}
=\sum_{\omega\in\mathbb R^c}\sum_{\nu\in\mathbb R^c}\Psi_{\omega}\Psi_{\nu}\bigl(\Psi_{-\nu}G_{\omega}\Psi_{\nu}\bigr)H_{\nu}.
\end{equation*}
It remains to observe that $\Psi_{-\nu}G_{\omega}\Psi_{\nu}$ is an operator of convolution with the function $x\mapsto g_\omega(x)e^{-i\langle\nu,x\rangle}$ (evidently, the $L_1$-norm of this function is equal to the $L_1$-norm of the function $g_\omega$). Indeed,
\begin{align*}
\bigl(\Psi_{-\nu}G_{\omega}\Psi_{\nu}\bigr)(x)
&=e^{-i\langle\nu,x\rangle}\int_{\mathbb R^c}g_\omega(x-y)e^{i\langle\nu,y\rangle}u(y)\,dy\\
&=\int_{\mathbb R^c}\bigl(g_\omega(x-y)e^{-i\langle\nu,x-y\rangle}\bigr)u(y)\,dy.\qed
\end{align*}
\renewcommand\qed{}
\end{proof}

 \begin{theorem}\label{t:main2}
The subalgebra $\widetilde{\mathoo C\mathoo N}_{1,\,APW}(L_p)$, $1\le p\le\infty$, is full in the algebra $\mathoo B(L_p)$, i.~e., if the operator $\mathbf1+N$, where $N\in\mathoo C\mathoo N_{1,\,APW}(L_p)$, is invertible, then $(\mathbf1+N)^{-1}=\mathbf1+M$, where $M\in\mathoo C\mathoo N_{1,\,APW}(L_p)$.
 \end{theorem}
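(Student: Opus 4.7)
The plan is to combine Theorem~\ref{t:main} with Theorem~\ref{t:B_{APW} is full} and then to extract the Fourier coefficients of the inverse on the kernel side by means of the Haar measure on $\mathbb R^c_b$. Applying Theorem~\ref{t:main} I would conclude $(\mathbf 1+N)^{-1}=\mathbf 1+M$, where $M\in\mathoo C\mathoo N_{1,\,AP}(L_p)$ is induced by an almost periodic kernel function $\tilde m\in C_{AP}(\mathbb R^c,L_1(\mathbb R^c,\mathoo B(\mathbb E)))$; by Proposition~\ref{p:5.1.9} this $\tilde m$ extends continuously to $\tilde m\in C(\mathbb R^c_b,L_1)$. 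On the other hand, since $N\in\mathoo C\mathoo N_{1,\,APW}(L_p)\subset\mathoo B_{APW}(L_p)$ and $\mathbf 1\in\mathoo A(L_p)$, the operator $\mathbf 1+N$ belongs to $\mathoo B_{APW}(L_p)$, so Theorem~\ref{t:B_{APW} is full} gives $(\mathbf 1+N)^{-1}=\sum_\omega\Psi_{-\omega}B_\omega$ with $B_\omega\in\mathoo A(L_p)$ and $\sum_\omega\lVert B_\omega\rVert<\infty$; subtracting $\mathbf 1$ yields $M=\sum_\omega\Psi_{-\omega}B_\omega'$ with $\sum_\omega\lVert B_\omega'\rVert<\infty$. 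The task reduces to showing that each $B_\omega'$ is convolution with some $g_\omega\in L_1(\mathbb R^c,\mathoo B(\mathbb E))$ and that $\sum_\omega\lVert g_\omega\rVert_{L_1}<\infty$.

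By Proposition~\ref{p:SPsi=PsiS} and the shift invariance of the $B_\omega'$, the same averaging device used in the proof of Theorem~\ref{t:B_{APW} is full} gives the extension $M\{\varkappa\}=\sum_\omega\langle\omega,\varkappa\rangle\Psi_{-\omega}B_\omega'$ for $\varkappa\in\mathbb R^c_b$, and Lemma~\ref{l:5.2.3} then yields
\begin{equation*}
\Psi_{-\omega_0}B_{\omega_0}'=\int_{\mathbb R^c_b}\langle-\omega_0,\varkappa\rangle\,M\{\varkappa\}\,d\varkappa
\end{equation*}
as a Bochner integral. I would transport this identity to the kernel side via the topological isomorphism $U:\tilde n\mapsto N$ of Corollary~\ref{c:N_b iso}, working in $L_\infty$ (which is legitimate by Corollary~\ref{c:5.4.8}). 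Since the kernel function of $M\{\varkappa\}$ is $x\mapsto\tilde m(x-\varkappa)$, substituting $\tau=x-\varkappa$ and using translation invariance of the Haar measure, the kernel function of $\Psi_{-\omega_0}B_{\omega_0}'$ becomes $x\mapsto\langle-\omega_0,x\rangle\,g_{\omega_0}$, where
\begin{equation*}
g_{\omega_0}:=\int_{\mathbb R^c_b}\langle\omega_0,\tau\rangle\,\tilde m(\tau)\,d\tau\in L_1\bigl(\mathbb R^c,\mathoo B(\mathbb E)\bigr).
\end{equation*}
Matching this with the kernel of $\Psi_{-\omega_0}G_{g_{\omega_0}}$ computed in Section~\ref{s:APWi} identifies $B_{\omega_0}'=G_{g_{\omega_0}}$, and the lower bound $c\lVert g\rVert_{L_1}\le\lVert G_g:L_\infty\to L_\infty\rVert$ from Proposition~\ref{p:N_b operator} then yields $\sum_\omega\lVert g_\omega\rVert_{L_1}\le c^{-1}\sum_\omega\lVert B_\omega'\rVert<\infty$, as required.

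The step I expect to require the most care is the transport of the Bochner integral through~$U$: one must argue that the integral $\int_{\mathbb R^c_b}\langle-\omega_0,\varkappa\rangle M\{\varkappa\}\,d\varkappa$ taken in $\mathoo B(L_\infty)$ corresponds under $U$ to the pointwise-in-$x$ integral $x\mapsto\int_{\mathbb R^c_b}\langle-\omega_0,\varkappa\rangle\tilde m(x-\varkappa)\,d\varkappa$ of the kernel functions. This will follow because $U$ is a topological isomorphism and therefore commutes with Bochner integration, together with continuity of $\varkappa\mapsto S_\varkappa\tilde m$ from $\mathbb R^c_b$ into $L_\infty(\mathbb R^c,L_1)$, which is guaranteed by the almost periodicity of $\tilde m$ combined with Corollary~\ref{c:N_b iso}.
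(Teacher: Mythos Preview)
Your proposal is correct and follows essentially the same route as the paper: apply Theorem~\ref{t:main} to get $m\in\mathoo C\mathoo N_{1,\,AP}$, apply Theorem~\ref{t:B_{APW} is full} to get the absolutely convergent operator expansion, work in $L_\infty$ via Corollary~\ref{c:5.4.8}, and match the Haar-measure Fourier coefficients on the operator and kernel sides through the isomorphism $U$ of Corollary~\ref{c:N_b iso}. The only cosmetic difference is that the paper identifies each $B_\omega'$ as a convolution by observing that $\Psi_{\omega_0}M_{\omega_0}$ is a shift-invariant integral operator and invoking Corollary~\ref{c:shift operator}, whereas you compute the kernel Fourier coefficient directly via the change of variables $\tau=x-\varkappa$; your explicit verification of $\sum_\omega\lVert g_\omega\rVert_{L_1}<\infty$ via the lower bound in Proposition~\ref{p:N_b operator} fills in a step the paper leaves implicit.
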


 \begin{proof}
By Theorems~\ref{t:main} and~\ref{t:fin}, the operator $M$ has the form
\begin{equation*}
\bigl(Mu\bigr)(x)=\int_{\mathbb R^c}m(x,x-y)\,u(y)\,dy,
\end{equation*}
with the same $m\in\mathoo C\mathoo N_{1,\,AP}$ for all $1\le p\le\infty$. So, it remains to prove that $m\in\mathoo C\mathoo N_{1,\,APW}$.

We suppose that $p=\infty$.

We extend the function $\tilde m\in C_{AP}\bigl(\mathbb R^c,L_1\bigl(\mathbb R^c,\mathoo B(\mathbb E)\bigr)\bigr)$ to the function $\tilde m\in C\bigl(\mathbb R^c_b,L_1\bigl(\mathbb R^c,\mathoo B(\mathbb E)\bigr)\bigr)$ in accordance with Corollary~\ref{p:5.1.9}. By Proposition~\ref{p:Kroneker}, this extension can be interpreted as the extension by continuity. Clearly, for any $\omega\in\mathbb R^c$ there exists the integral
\begin{equation*}
m_\omega=\int_{\mathbb X_b=\mathbb R^c_b}\langle\omega,\varkappa\rangle\tilde m(\varkappa)\,d\varkappa
\end{equation*}
with respect to the Haar measure.

We consider the family
\begin{equation*}
M\{h\}=S_hMS_{-h}:\,L_\infty\to L_\infty,\qquad h\in\mathbb R^c.
\end{equation*}
By Theorem~\ref{t:6.5.2}, the operator $M:\,L_\infty\to L_\infty$ is almost periodic, i.~e., $M\{\cdot\}\in C_{AP}\bigl(\mathbb R^c,\mathoo B(L_\infty)\bigr)$. Therefore, by Corollary~\ref{c:5.1.9}, it has a unique extension to $M\{\cdot\}\in C\bigl(\mathbb R^c_b,\mathoo B(L_\infty)\bigr)$. Again by Proposition~\ref{p:Kroneker}, this extension can be interpreted as the extension by continuity.

By Corollary~\ref{c:N_b iso}, the two functions $h\mapsto S_h\tilde m$ and $h\mapsto M\{h\}$ are connected by the isomorphism $U:\,\tilde m\mapsto M$, i.~e., $U:\,S_h\tilde m\mapsto M\{h\}$ for all $h$. Therefore their extensions on $\mathbb R^c_b$ by continuity are connected by $U$ as well. Consequently, the integrals
\begin{equation*}
\int_{\mathbb X_b}\langle\omega,\varkappa\rangle S_\varkappa\tilde m\,d\varkappa\qquad\text{and}\qquad \int_{\mathbb X_b}\langle\omega,\varkappa\rangle M\{\varkappa\}\,d\varkappa
\end{equation*}
are also connected by the same isomorphism $U:\,\tilde m\mapsto M$; here $\varkappa\mapsto S_\varkappa\tilde m$ means the extension by continuity of the function $h\mapsto S_h\tilde m$ from $\mathbb R^c$ to $\mathbb R^c_b$.

From Theorem~\ref{t:B_{APW} is full} it follows that the operator $(\mathbf1+N)^{-1}\{\varkappa\}$ has the form
\begin{equation*}
(\mathbf1+N)^{-1}\{\varkappa\}=\sum_{\omega\in\mathbb R^c}\langle-\omega,\varkappa\rangle \Psi_{-\omega}B_\omega,
\end{equation*}
where $B_\omega\in\mathoo A(L_\infty)$ and $\sum_{\omega\in\mathbb R^c}\lVert B_{\omega}\rVert<\infty$. Consequently,
\begin{equation*}
M\{\varkappa\}=\bigl[\mathbf1-(\mathbf1+N)^{-1}\bigr]\{\varkappa\}=\mathbf1-\sum_{\omega\in\mathbb R^c}\langle-\omega,\varkappa\rangle \Psi_{-\omega}B_\omega.
\end{equation*}

By Lemma~\ref{l:5.2.3}, for $\omega_0\in\mathbb R^c$, we have
\begin{equation}\label{e:int PsiB}
\int_{\mathbb X_b=\mathbb R^c_b}\langle\omega_0,\varkappa\rangle\Bigl(\mathbf1-\sum_{\omega\in\mathbb R^c}\langle-\omega,\varkappa\rangle \Psi_{-\omega}B_\omega\Bigr)\,d\varkappa
=
\begin{cases}
-\Psi_{-\omega_0}B_{\omega_0}&\text{for $\omega_0\neq0$},\\
\mathbf1-B_0 &\text{for $\omega_0=0$}.
\end{cases}
\end{equation}

On the other hand, by the above, the operator
\begin{equation*}
\int_{\mathbb X_b=\mathbb R^c_b}\langle\omega_0,\varkappa\rangle\Bigl(\mathbf1-\sum_{\omega\in\mathbb R^c}\langle-\omega,\varkappa\rangle \Psi_{-\omega}B_\omega\Bigr)\,d\varkappa
\end{equation*}
corresponds to the function (here the integrand $\varkappa\mapsto S_\varkappa\tilde m$ takes values in the space $C\bigl(\mathbb R^c_b,L_1\bigl(\mathbb R^c,\mathoo B(\mathbb E)\bigr)\bigr)$)
\begin{equation*}
\int_{\mathbb X_b=\mathbb R^c_b}\langle\omega_0,\varkappa\rangle S_\varkappa\tilde m\,d\varkappa,
\end{equation*}
where $\varkappa\mapsto S_\varkappa\tilde m$ means the extension by continuity of the function $h\mapsto S_h\tilde m$ from $\mathbb R^c$ to $\mathbb R^c_b$. Therefore operator~\eqref{e:int PsiB} is the integral operator $M_{\omega_0}$ generated by a function $\tilde m_{\omega_0}\in C\bigl(\mathbb R^c_b,L_1\bigl(\mathbb R^c,\mathoo B(\mathbb E)\bigr)\bigr)$.

Let $\omega_0\neq0$. Then $M_{\omega_0}=-\Psi_{-\omega_0}B_{\omega_0}$. Hence, the operator $-\Psi_{\omega_0}M_{\omega_0}=B_{\omega_0}$ belongs to $\mathoo A(L_\infty)$. Clearly, $\Psi_{\omega_0}M_{\omega_0}$ is an integral operator. Since it is shift invariant, by Corollary~\ref{c:shift operator} it is an operator of convolution with a function of the class $L_1$. The case $\omega_0=0$ is considered in a similar way.
 \end{proof}

\providecommand{\bysame}{\leavevmode\hbox to3em{\hrulefill}\thinspace}
\providecommand{\MR}{\relax\ifhmode\unskip\space\fi MR }
\providecommand{\MRhref}[2]{%
  \href{http://www.ams.org/mathscinet-getitem?mr=#1}{#2}
}
\providecommand{\href}[2]{#2}

\end{document}